\DeclareMathAlphabet{\mathpzc}{OT1}{pzc}{m}{it}
\newtheorem{theorem}{Theorem}[section]
\newtheorem{proposition}[theorem]{Proposition}
\newtheorem{lemma}[theorem]{Lemma}
\newtheorem{corollary}[theorem]{Corollary}
\newtheorem{definition}[theorem]{Definition}
\newtheorem{remark}[theorem]{Remark}
\def\Holder{{H\"{o}lder}}
\def\bdy #1{{\partial #1\hspace{1pt}}}
\def\bbR{{\mathbb R}}
\def\div{{\operatorname{div}}}
\def\Def{{\operatorname{Def}}}
\def\id{{\text{Id}}}
\def\supp{{\text{spt}}}
\def\cptsubset{\hspace{1pt}{\subset\hspace{-2pt}\subset}\hspace{1pt}}
\def\contsubset{\hspace{1pt}{\hookrightarrow}\hspace{1pt}}
\def\B{{\mathcal B}}
\def\C{{\mathcal C}}
\def\H{{\mathcal H}}
\def\L{{\mathcal L}}
\def\O{{\mathcal O}}
\def\P{{\mathcal P}}
\def\Q{{\mathcal Q}}
\def\R{{\mathcal R}}
\def\U{{\mathcal U}}
\def\V{{\mathcal V}}
\def\W{{\mathcal W}}
\def\rA{{\rm A}}
\def\rB{{\rm B}}
\def\rC{{\rm C}}
\def\rE{{\rm E}}
\def\rF{{\rm F}}
\def\rG{{\rm G}}
\def\rH{{\rm H}}
\def\rJ{{\rm J}}
\def\rL{{\rm L}}
\def\rM{{\rm M}}
\def\rN{{\rm N}}
\def\rO{{\rm O}}
\def\rP{{\rm P}}
\def\rR{{\rm R}}
\def\rT{{\rm T}}
\def\rV{{\rm V}}
\def\rX{{\rm X}}
\def\rb{{\rm b}}
\def\re{{\rm e}}
\def\rg{{\rm g}}
\def\ve{{\varepsilon}}
\def\eps{{\epsilon}}
\def\bJ{{\widebar{\rJ}}}
\def\bA{{\widebar{\rA}}}
\def\bw{{\widebar{w}}}
\def\bh{{\widebar{h}}}
\def\bpsi{{\widebar{\psi}}}
\def\bp{{\overline{\partial}\hspace{1pt}}}
\def\p{{\partial\hspace{1pt}}}
\def\n{{\rm n}}
\def\Forall{\forall\hspace{2pt}}
\def\comm#1#2{{\big[\hspace{-3.2pt}\big[#1,#2\hspace{1pt}\big]\hspace{-3.2pt}\big]}}
\def\bigcomm#1#2{{\Big[\hspace{-4pt}\Big[#1,#2\hspace{1pt}\Big]\hspace{-4pt}\Big]}}
\def\({{(\hspace{-2pt}(}}
\def\){{)\hspace{-2pt})}}
\def\smallexp#1{{\text{\small #1}}}
\def\triplenorm#1{{|\hspace{-1.2pt}|\hspace{-1.2pt}|{#1}|\hspace{-1.2pt}|\hspace{-1.2pt}|}}
\def\triplenorm#1{{\vvvert #1\vvvert}}
\def\pprime{{\hspace{1pt}\prime}}
\def\XXint#1#2#3{{\setbox0=\hbox{$#1{#2#3}{\int}$}
\vcenter{\hbox{$#2#3$}}\kern-.5\wd0}}
\def\bfJ{{\sf J}}
\def\bfA{{\sf A}}
\def\bfa{{\bf a}}
\def\bfg{{\sf g}}
\def\bfv{{\sf v}}
\def\bfw{{\sf w}}
\def\bfh{{\sf h}}
\def\bfq{{\sf q}}
\def\wfJ{{\widetilde{\bfJ}}}
\def\wfA{{\widetilde{\bfA}}}
\def\wfa{{\widetilde{\bfa}}}
\def\wfv{{\widetilde{\bfv}}}
\def\opbh{{1 \hspace{-1pt}+\hspace{-1pt} \rb_0 \bfh_{\ve\ve}}}
\title[The existence of solutions of 2-dimensional incompressible Navier-Stokes equations with surface tension in an optimal Sobolev space]{The existence of solutions of 2-dimensional incompressible Navier-Stokes equations on a moving domain in an optimal Sobolev space}
\author[C.H. A. Cheng]{C.H. Arthur Cheng}
\email{cchsiao@math.ncu.edu.tw}
\address{Department of Mathematics, National Central University, Jhongli City, Taoyuan County, 32001, Taiwan ROC}
\author[Y.C. Lin]{Ying-Chieh Lin}
\email{linyj@math.ncu.edu.tw}
\address{Department of Mathematics, National Central University, Jhongli City, Taoyuan County, 32001, Taiwan ROC}
\author[C.F. Su]{Cheng-Fang Su}
\email{@math.nctu.edu.tw}
\address{Department of Mathematics, National Central University, Jhongli City, Taoyuan County, 32001, Taiwan ROC}
\subjclass{35L65, 35L70, 35L80, 35Q35, 35R35, 76B03}
\keywords{Navier-Stokes, free boundary, surface tension, compatibility condition, optimal regularity}
\begin{document}

\begin{abstract}
We establish the existence of a solution to the Navier-Stokes equations on a moving domain with surface tension in an optimal Sobolev space for the case of two space dimension. No compatibility conditions are required to guarantee the existence of a solution.
\end{abstract}

\maketitle
{\small
\tableofcontents}

\section{Introduction}\label{sec:introduction}
\setcounter{equation}{0}
\subsection{The equations}
We are concerned with the 2-dimensional Navier-Stokes equations on a moving domain $\Omega(t)$ with surface tension on the moving boundary $\bdy \Omega(t)$. Let $\Omega$ be a bounded domain of $\bbR^2$ (the regularity of $\Omega$ will be specified later) which denotes the initial fluid domain, and $u$ and $p$ denote the fluid velocity and pressure, respectively. We consider
\begin{subequations}\label{NS}
\begin{alignat}{2}
u_t + (u\cdot \nabla) u + \nabla p &= \Delta u \qquad&&\text{in}\quad \Omega(t)\,,\\
\div u &= 0 &&\text{in}\quad\Omega(t)\,,\\
(\Def u - p\,\id)n &= \sigma \rH n \qquad&&\text{on}\quad\bdy\Omega(t)\,,\\
u &= u_0 &&\text{on}\quad \Omega\times \{t=0\}\,,\\
\V(\bdy \Omega(t)) &= u\cdot n &&\text{on}\quad\bdy\Omega(t)\,,
\end{alignat}
\end{subequations}
where 
the viscosity of the fluid is assumed to be $1$, $n$ is the outward-pointing unit normal of $\Omega(t)$, $\rH$ is the mean curvature of the boundary of $\Omega(t)$, $\sigma>0$ is the surface tension, $e$ the identity map defined by $e(x) = x$, and $\V(\bdy \Omega(t))$ denotes the normal velocity of the moving boundary $\bdy\Omega(t)$.

\subsection{Some prior results}
Free boundary problems are one of the main sources of highly nonlinear PDEs that require special treatment to establish well-posedness. Due to high nonlinearity, even the local-in-time solution are expected to exist only in spaces with high regularity, and such kind of results usually accompany certain orders of compatibility conditions if the governing equations are of parabolic type. For example, Xinfu Chen \& Fernando Reitich \cite{xfchen1992} established the well-posedness of the Stefan problem with surface tensions provided that the initial data satisfies two compatibility conditions. In the study of the interaction between incompressible viscous fluids and elastic shells, two compatibility conditions also have to be imposed for the purpose of the existence and uniqueness of the solution (see \cite{cchsiao2006} and \cite{cchsiao2010} for the detail).

When considering compressible or incompressible Navier-Stokes with surface tensions, one compatibility condition has to be imposed in order to guarantee the well-posedness in previous literatures. In the compressible case, if $\rho_0$ and $u_0$ denote the initial fluid density and velocity respectively, by imposing the compatibility condition
$$
\big[\mu \Def u_0 + \big(\lambda \div u_0 - p(\rho_0)\big) \id\big] n(0) - \sigma \rH(0) n(0) = - p_e n(0) \ \quad\text{on}\quad \bdy\Omega\,,
$$
Solonnikov \& Tani \cite{Solonnikov1990} showed the existence of a unique solution. In the incompressible case, if $\rP_{\text{tan}}$ denotes the projection map onto the tangent bundle of $\bdy\Omega$ and $u_0$ is the initial velocity, the first order compatibility condition reads
\begin{equation}\label{compatibility_condition}
\rP_{\text{tan}} \big(\Def u_0 n(0)\big) = 0 \ \quad\text{on}\quad \bdy\Omega,
\end{equation}
and under the assumption that $u_0 \in H^2(\Omega)$ satisfies the first order compatibility condition Shkoller \& Coutand \cite{steve2003} established the well-posedness of the equation. We remark that compatibility condition (\ref{compatibility_condition}) does not involve the initial pressure $p_0$.

To illustrate the importance of our work in this paper, we emphasize that the compatibility conditions put a lot of constraint on the initial data, especially when considering the numerical simulation in which case the initial data can be given in almost arbitrary fashions. For example, for the incompressible case, if $\Omega = B(0,1)$ and $u_0$ is given by
$$
u_0(x,y) = \big(F(y),G(x)\big)\,.
$$
Then (\ref{compatibility_condition}) holds only when
\begin{equation}\label{FG}
F^\pprime(\sin\theta) + G^\pprime(\cos\theta) = 0 \qquad\Forall \theta\in (0,2\pi)\,,
\end{equation}
while we know that it is easy to find $F$ and $G$ such that (\ref{FG}) does not hold.

\subsection{The difficulties}\label{sec:difficulties}
When looking for the solution of the velocity possessing only $H^2$ spatial regularity (which is the optimal Sobolev space for a strong solution to exist), it is not clear how the moving boundary $\Gamma(t)$ is defined since the Lagrangian flow map $\eta$ satisfying the ODE
\begin{alignat*}{2}
\eta_t(x,t) &= u\big(\eta(x,t),t\big) \qquad&&\Forall x\in \Omega,t> 0\,,\\
\eta(x,0) &= x &&\Forall x\in \Omega\,,
\end{alignat*}
is in general not solvable (uniquely) due to the lack of Lipschitz continuity (on the other hand, $u\in C^{0,\alpha}(\Omega(t))$ for all $\alpha\in (0,1)$ and $t>0$ because of the Sobolev embedding $C^{0,\alpha}(\Omega(t)) \contsubset H^2(\Omega(t))$ if $\n=2$). Therefore, it is not adequate to describe the moving boundary using the Lagrangian flow map $\eta$. Moreover, due to the lack of regularity of the solution, the nonlinearity appears to be much stronger, and some standard ways of constructing solutions fail to work (see Remark \ref{rmk:issues} for the detail), even though the a priori estimates can be easily derived.


\subsection{Outlines}
In Section \ref{sec:ALE_formulation}, 
we introduce the ALE map which can describe the time-dependent domain $\Omega(t)$ with free boundary moving along with an $H^2$-velocity. However, reasoning in Remark \ref{rmk:issues}, the ALE formulation is still not good enough for the purpose of constructing solutions with optimal regularity, so we slightly modify the ALE formulation in the last part of this section (for the purpose of constructing an approximated solution). The functional framework are then introduced in Section \ref{sec:notation}, and some preliminary results are established in this section as well. The main theorem is stated in Section \ref{sec:main_thm}, and we prove the main theorem from Section \ref{sec:construction} to Section \ref{sec:time_continuation}, including the introduction of an approximated regularized problem (with a smooth parameter $\ve$) as well as the construction of a solution to this particular approximation in Section \ref{sec:construction}, the $\ve$-independent estimates in Section \ref{sec:ve_indep_est}, and the argument of the continuation of time in Section \ref{sec:time_continuation}. 

\section{The ALE formulation}\label{sec:ALE_formulation}

\subsection{A map $\psi$ that maps from a fixed reference domain to $\Omega(t)$}\label{Gamma}
Let $\Gamma$ be a smooth closed curve in the tubular neighborhood of $\bdy\Omega$, $\rO$ be the region enclosed by $\Gamma$, and $\rN$ is the outward-pointing unit normal of $\rO$ such that each point $y\in \bdy\Omega$ corresponds to a unique $x\in \Gamma$ such that $y = x + h_0(x) \rN(x)$ for some $h_0$; that is, $\bdy\Omega$ is the graph of $h_0$ over $\Gamma$. Let $h:\Gamma \to \bbR$ denote the signed distance function which measure the signed distance from $\bdy\Omega(t)$ to $\Gamma$. In other words, if $x\in \Gamma$, then the point $x + h(x,t) \rN(x)$ belongs to the curve $\bdy\Omega(t)$. Let $\psi$ be the harmonic extension of the map $e + h \rN$ on $\Gamma$; that is, $\psi$ solves
\begin{subequations}\label{psi_eq}
\begin{alignat}{2}
\Delta \psi &= 0 &&\text{in}\quad\rO\,,\\
\psi &= e + h \rN\qquad &&\text{on}\quad\Gamma\,.
\end{alignat}
\end{subequations}
We remark that if $\|h\|_{L^\infty(\Gamma)} \ll 1$, 
$\psi :\rO \to \Omega(t)$ is a diffeomorphism.


\subsection{The representation of some geometric quantities}
Let $\ell$ be the length of $\Gamma$, and $\Gamma$ be parametrized by the map $\rX:[-\ell/2,\ell/2)\to \bbR^2$ with arc-length $s$. Then the map
$$
\psi(\rX(s),t) = \rX(s) + h(\rX(s),t) \rN(\rX(s)) \quad\text{on}\quad [-\ell/2,\ell/2)
$$
is a parametrization of the moving curve $\bdy\Omega(t)$.

\subsubsection{The metric}
For any function $G$ defined on $\Gamma$, we use the notation ${}^\pprime$ to denote the derivative with respect to the arc-length $s$; that is,
$$
G^\pprime\big(\rX(s),t\big) = \frac{\p}{\p s}\, G\big(\rX(s),t\big) 
\,.
$$
Then $\psi^\pprime = \rX'\circ\rX^{-1} + h' \rN + h \rN^\pprime$. As a consequence, the metric $\rg$ induced by the map $\psi$ is given by
\begin{equation}\label{defn:g}
\rg \equiv \psi^\pprime \cdot \psi^\pprime = 1 + 2 \rb_0 h + h^{\prime\hspace{1pt}2} + h^2 \rb_0^2 = (1 + \rb_0 h)^2 + h^{\prime\hspace{1pt}2} \quad\text{on}\quad \Gamma\,,
\end{equation}
where $\rb_0 = - (\rX''\circ \rX^{-1}) \cdot \rN = (\rX'\circ \rX^{-1}) \cdot \rN^\pprime$ is the curvature of $\Gamma$ (since $|\rX'| = 1$). We remark that since $\Gamma$ is assumed to be smooth, $\rb_0$ is a smooth function (of $s$).

\subsubsection{The curvature}
Since $\psi^\pprime$ is tangent to $\bdy\Omega(t)$, we find that the normal vector $n$ is given by
\begin{equation}\label{defn:n}
n\circ\psi \hspace{-1pt}=\hspace{-1pt}\frac{-h' (\rX'\circ\rX^{-1}) \hspace{-1pt}+\hspace{-1pt}(1 \hspace{-1pt}+\hspace{-1pt}\rb_0 h) \rN}{\sqrt{(1 \hspace{-1pt}+\hspace{-1pt}\rb_0 h)^2 \hspace{-1pt}+\hspace{-1pt}h^{\prime\hspace{1pt}2}}} \hspace{-1pt}=\hspace{-1pt}\frac{-h' (\rX' \circ\rX^{-1}) \hspace{-1pt}+\hspace{-1pt}(1 \hspace{-1pt}+\hspace{-1pt}\rb_0 h) \rN}{\sqrt{\rg}} \quad\text{on}\quad \Gamma\,.
\end{equation}
Therefore, by the formula $\rH\circ\psi = \rg^{-1} \psi^{\pprime\prime}\cdot (n\circ\psi)$ we obtain that
\begin{align}
\rH\circ\psi 
&= \frac{(1 + \rb_0 h) h'' - \rb_0(1 + 2 \rb_0 h + \rb_0^2 h^2 + 2 h^{\prime\hspace{1pt}2}) - h h' \rb_0^\pprime}{\rg^{3/2}} \,. \label{defn:H} 
\end{align}

\begin{remark}
In the methodology we employ, we need $\rH\circ\psi \in H^{0.5}(\Gamma)$. 
If $\bdy \Omega$ is an $H^{3.5}$-surface, then we can simply let $\rO = \Omega$ to proceed. However, since we will only assume that $\bdy\Omega$ is an $H^2$-surface, the use of $\rO = \Omega$ will result in that $\rb_0 \in L^2(\Gamma)$ which implies that the curvature $\rH \circ \psi$ at best belongs to $H^{-1}(\Gamma)$ due to the presence of $\rb_0^\pprime$ in {\rm(\ref{defn:H})}. This is the reason why we choose a smooth $\rO$ to start with.
\end{remark}

\subsection{Some basic identities concerning the map $\psi$}
Let $\rJ = \det(\nabla \psi)$, and $\rA = (\nabla \psi)^{-1}$. Writing $x = \psi(y)$, then the divergence theorem and the Piola identity suggest that
$$
\int_{\bdy\psi(\rO)} w\cdot n dS_x = \int_{\psi(\rO)} \div w dx = \int_\rO \rJ \rA^j_i (w^i\circ\psi)_{,j} dy = \int_{\bdy\rO} \hspace{-2pt}\rJ \rA^j_i (w^i\circ\psi) \rN_j dS_y\,.
$$
Since $\psi: \bdy \rO \to \bdy\psi(\rO)$ is also a diffeomorphism, the change of variable formula implies that
$$
\int_{\bdy\rO} (w^i \circ\psi) (n^i\circ\psi) \sqrt{\rg} dS_y = \int_{\bdy\psi(\rO)} w\cdot n dS_x = \int_{\bdy\rO} \hspace{-2pt}\rJ \rA^j_i (w^i\circ\psi) \rN_j dS_y\,.
$$
The identity above holds for all smooth $w$; thus we obtain that
\begin{equation}\label{JAtN}
\rJ \rA^{\hspace{-1pt}\rT} \rN = \sqrt{\rg} (n\circ \psi) \ \quad\text{on}\quad \Gamma.
\end{equation}
In other words, the direction of the exterior normal $n$ is parallel to the vector $\rA^{\hspace{-1pt}\rT} \rN$, and the length of $\rJ \rA^{\hspace{-1pt}\rT} \rN$ is $\sqrt{\rg}$, the square root of the metric.

\subsection{The equations in ALE coordinate}
Let $v=u\circ\psi$ and $q=p\circ\psi$ be the velocity and pressure in ALE coordinate. Taking the composition of (\ref{NS}) and the map $\psi$, we find that the equation (\ref{NS}) is transformed to
\begin{subequations}\label{NSALE}
\begin{alignat}{2}
v^i_t + \rA^k_\ell (v^\ell - \psi^\ell_t) v^i_{,k} + \rA^k_i q_{,k} &= \rA^k_\ell \big(\rA^j_\ell v^i_{,j} + \rA^j_i v^\ell_{,j}\big)_{,k} \qquad &&\text{in}\quad\rO\times(0,\rT)\,,\\
\rA^j_i v^i_{,j} &= 0 &&\text{in}\quad\rO\times (0,\rT)\,,\\
\big[\rA^j_\ell v^i_{,j} + \rA^j_i v^\ell_{,j} - q \delta^\ell_i\big] \rA^k_\ell \rN_k &= \sigma (\rH \circ \psi) \rA^k_i \rN_k &&\text{on}\quad \Gamma\times (0,\rT)\,,\\
\Delta \psi &= 0 &&\text{in}\quad\rO\times(0,\rT)\,,\\
\psi &= (e + h \rN) &&\text{on}\quad \Gamma\times (0,\rT)\,,\\
\psi_t \cdot (n\circ \psi) &= (u\cdot n)\circ\psi &&\text{on}\quad \Gamma\times (0,\rT)\,,\\
v &= v_0 \equiv u_0\circ \psi_0 &&\text{on}\quad \rO\times \{t=0\},\\
h &= h_0 &&\text{on}\quad \Gamma\times \{t=0\},
\end{alignat}
\end{subequations}
where $\rJ$, $\rA$ are defined in previous sub-section, and $\psi_0 = \psi(0)$. The boundary condition (\ref{NSALE}c) is obtained by taking the composition of (\ref{NS}c) and the map $\psi$, then applying identity (\ref{JAtN}). Furthermore, the curvature $\rH \circ\psi$ in boundary condition (\ref{NSALE}c) will be represented by (\ref{defn:H}).

We also remark that boundary condition (\ref{NSALE}f) reads that the speed of $\bdy\Omega(t)$ in the direction of exterior normal is the same as $u\cdot n$. In other words, the boundary of $\Omega(t)$ moves along with the fluid velocity.

\subsection{The evolution equation of $h$}
The evolution equation of $h$ is a direct consequence of the boundary condition (\ref{NSALE}e,f). Differentiating (\ref{NSALE}e) in time, then taking the inner product of the resulting equation and $n\circ\psi$, by (\ref{NSALE}f) we find that
$$
h_t (\rN \cdot (n\circ\psi)\big) = (u\cdot n)\circ \psi \qquad\text{on}\quad \Gamma\times (0,\rT)\,.
$$
By (\ref{defn:n}) and (\ref{JAtN}), the equation above reads
\begin{equation}
h_t = \frac{v\cdot (n\circ\psi)}{\rN \cdot (n\circ\psi)} 
= \frac{\rJ v^i \rA^k_i \rN_k}{\sqrt{\rg}\, \rN \cdot (n\circ\psi)} = \frac{\rJ \rA^{\hspace{-1pt}\rT} \rN}{1 + \rb_0 h} \cdot v \qquad\text{on}\quad \Gamma\times (0,\rT)\,. \label{h_eq}
\end{equation}
We remark here that the denominator does not vanish for a short period of time if $|h|\ll 1$. Equation (\ref{h_eq}) is the evolution equation of $h$.

\subsection{A modification of the ALE formulation}\label{sec:new_formulation}
For the purpose of constructing solutions, we modify the ALE formulation such that the new formulation keeps the structure of the divergence-free ``velocity'' field and the ``pure'' pressure gradient.

Let $w^i = \rJ \rA^i_j v^j$ or equivalently $v^i = \rJ^{-1} \psi^i_{,r} w^r$. Then (\ref{NSALE}b) together with the Piola identity implies that $w$ is divergence-free. Moreover, since
\begin{align*}
& \psi^i_{,s} \rA^k_\ell \big(\rA^j_\ell v^i_{,j} +\rA^j_i v^\ell_{,j} \big)_{,k} \hspace{-1pt}= \psi^i_{,s} \rA^k_\ell \big[\rA^j_\ell (\rJ^{-1} \psi^i_{,r} w^r)_{,j} +\rA^j_i (\rJ^{-1} \psi^\ell_{,r} w^r)_{,j} \big]_{,k} \\
&\qquad = \big[\psi^i_{,s} \rA^k_\ell \rA^j_\ell (\rJ^{-1} \psi^i_{,r} w^r)_{,j} + \rA^k_\ell(\rJ^{-} \psi^\ell_{,r} w^r)_{,s})\big]_{,k} \\
&\qquad\quad - (\psi^i_{,s} \rA^k_\ell)_{,k} \big[\rA^j_\ell (\rJ^{-1} \psi^i_{,r} w^r)_{,j} + \rA^j_i (\rJ^{-} \psi^\ell_{,r} w^r)_{,j})\big]
\end{align*}
and
\begin{align*}
&\psi^i_{,s} \big[\rA^j_\ell v^i_{,j} + \rA^j_i v^\ell_{,j} - q \delta^\ell_i\big] \rA^k_\ell \rN_k \\
&\qquad\quad = \big[\psi^i_{,s} \rA^k_\ell \rA^j_\ell (\rJ^{-1} \psi^i_{,r} w^r)_{,j} + \rA^k_\ell (\rJ^{-1} \psi^\ell_{,r} w^r)_{,s} - q \delta^k_s \big] \rN_k\,,
\end{align*}
if $\rL_\psi$ denotes the second order differential operator given by
\begin{equation}\label{defn:Lpsi}
\displaystyle{} \big[\rL_\psi(w)\big]^s = \big[\psi^i_{,s} \rA^k_\ell \rA^j_\ell (\rJ^{-1} \psi^i_{,r} w^r)_{,j} + \rA^k_\ell (\rJ^{-1} \psi^\ell_{,r} w^r)_{,s}\big]_{,k}
\end{equation}
and $\ell_\psi(w,q)$ denotes the boundary operator given by
\begin{equation}\label{defn:ellpsi}
\begin{array}{l}
\displaystyle{} \big[\ell_\psi(w,q)\big]^s = \big[\psi^i_{,s} \rA^k_\ell \rA^j_\ell (\rJ^{-1} \psi^i_{,r} w^r)_{,j} + \rA^k_\ell (\rJ^{-1} \psi^\ell_{,r} w^r)_{,s} - q \delta^k_s \big] \rN_k \,,
\end{array}
\end{equation}
we find that $(w,q)$ satisfies the following equation
\begin{subequations}\label{NSnew}
\begin{alignat}{2}
\rJ^{-1} \psi^i_{,s} \psi^i_{,r} w^r_t - \big[\rL_\psi(w)\big]^s + q_{,s} &= F^s \qquad&&\text{in}\quad \rO\times (0,T)\,,\\
\div w &= 0\qquad &&\text{in}\quad \rO\times (0,T)\,,\\
\ell_\psi(w,q) &= \sigma (\rH\circ\psi) \rN \qquad&&\text{on}\quad \Gamma\times (0,T)\,,\\
\Delta \psi &= 0 &&\text{in}\quad \rO\times (0,T)\,,\\
\psi &= e + h \rN &&\text{on}\quad \Gamma\times (0,T)\,,\\
h_t &= \frac{w\cdot \rN}{1 \hspace{-1pt}+\hspace{-1pt} \rb_0 h} \qquad&&\text{on}\quad \Gamma\times (0,T)\,,\\
w &= w_0 &&\text{on}\quad \rO\times \{t=0\}\,,\\
h &= h_0 &&\text{on}\quad \Gamma\times \{t=0\}\,,
\end{alignat}
\end{subequations}
where
\begin{equation}\label{defn:F}
\begin{array}{l}
\displaystyle{} F^s = - \,\psi^i_{,s} \rA^j_\ell (\rJ^{-1} \psi^\ell_{,r} w^r - \psi^\ell_t) (\rJ^{-1} \psi^i_{,s} w^s)_{,j} - \psi^i_{,s} (\rJ^{-1} \psi^i_{,r})_t w^r \vspace{.1cm}\\
\displaystyle{}\hspace{24pt} + (\psi^i_{,s} \rA^k_\ell)_{,k} \big[\rA^j_\ell (\rJ^{-1} \psi^i_{,r} w^r)_{,j} + \rA^j_i (\rJ^{-} \psi^\ell_{,r} w^r)_{,j})\big]\,.
\end{array}
\end{equation}
Letting $\rF = F - \big[\rJ^{-1} (\nabla \psi)^\rT (\nabla \psi) - \id\big] w_t$, (\ref{NSnew}a) can be rewritten as
$$
w_t - \rL_\psi(w) + \nabla q = \rF \qquad\text{in}\quad \rO\times (0,T)\,. \eqno{\rm(\ref{NSnew}a')}
$$


\section{Notation and preliminary results}\label{sec:notation}
\subsection{The energy spaces $\V(\rT)$, $\H(\rT)$, $\W(\rT)$, $\H_1(\rT)$}
Let $\V(\rT)$ denote the space (of solutions $v$)
$$
\V(\rT) \equiv \Big\{v \in L^2(0,\rT;H^2(\rO))\, \Big|\, v_t \in L^2(0,\rT;L^2(\rO))\Big\}
$$
equipped with norm
$$
\|v\|_{\V(\rT)} = \|v\|_{L^2(0,\rT;H^2(\rO))} + \|v_t\|_{L^2(0,\rT;L^2(\rO))}\,,
$$
$\Q(\rT)$ denote the space (of solutions $q$) $L^2(0,\rT;H^1(\rO))$, and $\H(\rT)$ denote the space (of solutions $h$)
$$
\H(\rT) \equiv \Big\{ h\in L^2(0,\rT;H^{2.5}(\Gamma))\,\Big|\, h_t \in L^2(0,\rT;H^{1.4}(\Gamma)) 
\Big\}
$$
(in which the number $1.4$ can be replaced by any number closed to but less than $1.5$) equipped with norm
$$
\|h\|_{\H(\rT)} \equiv \|h\|_{L^2(0,\rT;H^{2.5}(\Gamma))} + \|h_t\|_{L^2(0,\rT;H^{1.4}(\Gamma))} \,.
$$
We also define two spaces $\W(\rT)$ and $\H_1(\rT)$ for the purpose of constructing approximated solutions. The space $\W(\rT)$ is the collection of all $w\in \V(\rT)$ such that $w\in L^2(0,\rT;H^2(\Gamma))$; that is
$$
\W(\rT) = \Big\{ w \in \V(\rT) \,\Big|\, w \in L^2(0,\rT;H^2(\Gamma))\Big\},
$$
and the norm $\|\cdot\|_{\W(\rT)}$ is given by
$$
\|w\|_{\W(\rT)} = \|w\|_{\V(\rT)} + \|w\|_{L^2(0,\rT;H^2(\Gamma))}\,.
$$
The space $\H_1(\rT)$, on the other hand, is not a subspace of $\H(\rT)$. It is given by
$$
\H_1(\rT) \equiv \Big\{ h \in L^2(0,\rT;H^2(\Gamma))\,\Big|\, h_t\in L^2(0,\rT;H^2(\Gamma)) \,,\ h_{tt}\in L^2(0,\rT;H^{-0.5}(\Gamma))
\Big\}
$$
equipped with norm
$$
\|h\|_{\H_1(\rT)} \equiv \|h\|_{L^2(0,\rT;H^2(\Gamma))} + \|h_t\|_{L^2(0,\rT;H^2(\Gamma))} + \|h_{tt}\|_{L^2(0,\rT;H^{-0.5}(\Gamma))}
\,.
$$
By the fundamental theorem of Calculus,
\begin{subequations}\label{sup_in_time_ineq}
\begin{align}
\sup_{t\in [0,\rT]} \|v(t)\|_{H^1(\rO)} &\le \|v(0)\|_{H^1(\rO)} + C \|v\|_{\V(\rT)} \,, \label{v_H1_est}\\
\sup_{t\in [0,\rT]} \|h(t)\|_{H^2(\Gamma)} &\le \|h(0)\|_{H^2(\Gamma)} + \|h\|_{\H_1(\rT)} \,. \label{h_H2_est1} 
\end{align}
\end{subequations}

\subsection{A useful lemma}
By interpolation, we can derive the following useful
\begin{lemma}
Suppose that $f \in H^s(\Gamma)$ for some $s>1/2$, and $g \in H^{0.5}(\Gamma)$. Then $fg \in H^{0.5}(\Gamma)$ and
\begin{equation}\label{duality_ineq}
\|fg\|_{H^{0.5}(\Gamma)} \le C_s \|f\|_{H^s(\Gamma)} \|g\|_{H^{0.5}(\Gamma)}
\end{equation}
for some generic constant $C_s>0$.
\end{lemma}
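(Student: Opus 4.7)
The plan is to view multiplication by $f$ as a linear operator $T_f : g \mapsto fg$ and obtain the estimate by interpolating between two simpler endpoint bounds, one at the $L^2$ level and one at the $H^{s_1}$ level for some $s_1 \in (1/2, s]$.

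For the first endpoint I would use that $\Gamma$ is a smooth closed curve (hence a one-dimensional manifold), so the Sobolev embedding $H^{s}(\Gamma) \hookrightarrow L^\infty(\Gamma)$ holds for every $s > 1/2$. Consequently
\begin{equation*}
\|fg\|_{L^2(\Gamma)} \le \|f\|_{L^\infty(\Gamma)} \|g\|_{L^2(\Gamma)} \le C_s \|f\|_{H^s(\Gamma)} \|g\|_{L^2(\Gamma)},
\end{equation*}
so $T_f$ is bounded from $L^2(\Gamma)$ to $L^2(\Gamma)$ with operator norm $\le C_s \|f\|_{H^s(\Gamma)}$.

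For the second endpoint I would fix an auxiliary index $s_1$ with $1/2 < s_1 \le \min(s,1)$. Since $\Gamma$ is one-dimensional, the space $H^{s_1}(\Gamma)$ is a Banach algebra (this is the standard ``$s > n/2$'' algebra property applied with $n=1$); in particular
\begin{equation*}
\|fg\|_{H^{s_1}(\Gamma)} \le C_{s_1} \|f\|_{H^{s_1}(\Gamma)} \|g\|_{H^{s_1}(\Gamma)} \le C_{s_1} \|f\|_{H^s(\Gamma)} \|g\|_{H^{s_1}(\Gamma)},
\end{equation*}
using the continuous embedding $H^s(\Gamma) \hookrightarrow H^{s_1}(\Gamma)$. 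Thus $T_f$ is also bounded from $H^{s_1}(\Gamma)$ to $H^{s_1}(\Gamma)$ with operator norm $\le C_{s_1} \|f\|_{H^s(\Gamma)}$.

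Now I would apply (complex or real) interpolation between the endpoint pairs $(L^2, L^2)$ and $(H^{s_1}, H^{s_1})$. Choosing the interpolation parameter $\theta = 1/(2 s_1) \in (0,1)$ (which is legitimate precisely because $s_1 > 1/2$) gives $[L^2(\Gamma), H^{s_1}(\Gamma)]_{\theta} = H^{0.5}(\Gamma)$, and the interpolated operator norm is bounded by the geometric mean of the two endpoint norms, hence still by $C_s \|f\|_{H^s(\Gamma)}$. This yields the desired inequality. The only step that requires a little care is verifying that the two structural facts we invoke for $\Gamma$, namely the embedding $H^s \hookrightarrow L^\infty$ for $s > 1/2$ and the algebra property of $H^{s_1}$ for $s_1 > 1/2$, transfer from the model case of $\mathbb{T}$ to the smooth closed curve $\Gamma$; this is routine via the arclength parametrization $\rX$, but is the one spot where a reference to standard interpolation theory on closed manifolds is needed.
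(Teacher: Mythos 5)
Your proof is correct, and it matches the route the paper implicitly intends: the paper gives no proof of its own beyond the phrase ``by interpolation,'' and what you have written out is exactly the standard operator-interpolation argument that hint points to. The two endpoint bounds (on $L^2(\Gamma)$ via $H^s(\Gamma)\hookrightarrow L^\infty(\Gamma)$ for $s>1/2$, and on $H^{s_1}(\Gamma)$ via the algebra property for $s_1>1/2$), the identification $[L^2(\Gamma),H^{s_1}(\Gamma)]_\theta = H^{\theta s_1}(\Gamma)$, and the choice $\theta = 1/(2s_1)\in(0,1)$ are all correct and combine to give the stated bound with constant controlled by $\|f\|_{H^s(\Gamma)}$.
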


\subsection{The horizontal convolution-by-layers operator and a commutator type estimate}\label{sec:horizontal_convolution}
Let $\eta:\bbR \to \bbR$ be a non-negative smooth function supported in the interval $\big(\smallexp{$\displaystyle{}-\hspace{-1pt}\frac{\ell}{4}$},\smallexp{$\displaystyle{}\frac{\ell}{4}$}\big)$ and satisfying
$$
\int_{-\ell/4}^{\ell/4} \eta(s)\, ds = 1.
$$
Let $\eta_\eps(s) = \smallexp{$\displaystyle{}\frac{1}{\eps} \eta\big(\frac{s}{\eps}\big)$}$ for all $\eps>0$. Given a function $f$ defined on $\Gamma$, we define the horizontal convolution $\eta_\eps \star f$ by
\begin{align*}
f_\eps(x) &= (\eta_\eps\star f)(x) \equiv \big(\eta_\eps \star (f\circ\rX)\big)(\rX^{-1}(x)) \\
&= \int_{-\ell/2}^{\ell/2} \eta_\eps(\rX^{-1}(x)-\tilde{s}) f(\rX(\tilde{s}))\, d\tilde{s} \qquad\Forall x\in \Gamma\,.
\end{align*}
Since every point $x$ can be identified as $\rX(s)$ for a unique $s\in [-\ell/2,\ell/2)$, we also write the equation above as
$$
f_\eps(s) = (\eta_\eps \star f)(s) = \int_{-\ell/2}^{\ell/2} \eta_\eps(s-\tilde{s}) f(\tilde{s})\, d\tilde{s} \qquad\text{if \ $x = \rX(s)$}\,.
$$
We note that in the equation above, $f(\tilde{s})$ is understood as $f(\rX(\tilde{s}))$. Moreover, given two parameters $\eps$ and $\ve$, we have
\begin{align}
\big[\eta_\eps \star (\eta_\ve \star f)\big] (s) &= \int_{-\ell/2}^{\ell/2} \frac{1}{\eps}\, \eta\big(\frac{s'}{\eps}\big) \int_{-\ell/2}^{\ell/2} \frac{1}{\ve}\, \eta\big(\frac{s''}{\ve}\big) f(s''+s'-s) ds''
ds' \nonumber\\
&= \big[\eta_\ve \star (\eta_\eps \star f)\big] (s)\,; \label{commute_convol}
\end{align}
thus two horizontal convolution commute.

Having the convolution on $\Gamma$ defined, we define the commutator of the horizontal convolution operator $\eta_\eps\star $ and a function $f$ by
$$
\big(\comm{\eta_\eps\star }{f} g\big)(s) = \big(\eta_\eps\star (fg) \hspace{-1pt}-\hspace{-1pt} f (\eta_\eps\star g)\big)(s) = \int_{-\ell/2}^{\ell/2} \hspace{-2pt} \eta_\eps(s-\tilde{s}) \big[f(\tilde{s}) \hspace{-1pt}-\hspace{-1pt} f(s)\big] g(\tilde{s})\, d\tilde{s}\,.
$$
Then the mean value theorem and Young's inequality imply that
\begin{equation}\label{comm_est_temp1}
\big\|\comm{\eta_\eps\star }{f} g\big\|_{L^2(\Gamma)} \le \eps \|f^\pprime\|_{L^\infty(\Gamma)} \|g\|_{L^2(\Gamma)}\,.
\end{equation}
Moreover,
\begin{align*}
& \big(\comm{\eta_\eps\star }{f} g\big)'(s) \\
&\qquad = \int_{-\ell/2}^{\ell/2} \frac{1}{\eps}\,\eta'_\eps(s - s^\pprime) \big[f(s^\pprime) - f(s)\big] g(s^\pprime)\, ds^\pprime - \int_{-\ell/2}^{\ell/2} \eta_\eps(x-y) f^\pprime(s) g(s^\pprime)\, ds^\pprime;
\end{align*}
thus
\begin{align}
\big\|\big(\comm{\eta_\eps\star }{f} g \big)'\big\|_{L^2(\Gamma)} &\le \|\eta^\pprime\|_{L^1(\Gamma)} \|f^\pprime\|_{L^\infty(\Gamma)} \|g\|_{L^2(\Gamma)} + \|f^\pprime\|_{L^\infty(\Gamma)} \|g\|_{L^2(\Gamma)} \nonumber\\
&\le C \|f^\pprime\|_{L^\infty(\Gamma)} \|g\|_{L^2(\Gamma)}\,. \label{comm_est_temp2}
\end{align}

For any given $f$ defined on $\bbR^2_+$, we can also define the {\it horizontal convolution-by-layers} operator $\Lambda_\eps$ by
\begin{equation*}
\Lambda_\eps f (y_1, y_2) = \int_\bbR \eta_\eps (y_1 - z_1) f(z_1, y_1)\, dz_1 \qquad\Forall f ( \cdot , y_2) \in L^1(\bbR) \,.
\end{equation*}
It should be clear that $\Lambda_\eps$ smooths functions defined on $\bbR^2$ along horizontal $y_1$-direction, but does not smooth these functions in the vertical $y_2$-direction. In addition, we can restrict the operator $ \Lambda_\eps$ to act on functions $f: \bbR^{\n-1} \to \bbR $ as well, in which case $ \Lambda_\eps$ becomes the usual mollification operator $\eta_\eps\star$.

By standard properties of convolution, there exists a constant $C$ which is independent of $\eps$, such that for
$s \ge 0$,
\begin{equation*}
\|\Lambda_\eps F\|_{H^s(\bbR^2_+)} \le C \|F\|_{H^s(\bbR^2_+)} \qquad \Forall F \in H^s(\bbR^2_+) \,,
\end{equation*}
and
\begin{equation*}
\|\Lambda_\epsilon F\|_{H^s(\bdy \bbR^2_+)} \le C \|F\|_{H^s(\bdy\bbR^2_+)} \qquad \Forall F \in H^s(\bdy\bbR^2_+) \,.
\end{equation*}
Furthermore,
\begin{equation}\label{F1}
\epsilon \|(\Lambda_\epsilon F)_{,1}\|_{L^2(\bbR^2_+)}
\le C \|F\|_{L^2(\bbR^2_+)} \qquad \Forall F \in L^2(\bbR^2_+) \,.
\end{equation}
Similar to (\ref{comm_est_temp1}) and (\ref{comm_est_temp2}), 
we also have
\begin{subequations}\label{comm_est}
\begin{align}
\big\|\comm{\Lambda_\eps}{f} g\big\|_{L^2(\bbR^2_+)} &\le C \eps \|f_{,1}\|_{L^\infty(\bbR^2_+)} \|g\|_{L^2(\bbR^2_+)}\,,\\
\big\|\big(\comm{\Lambda_\eps}{f}g\big)_{,1}\big\|_{L^2(\bbR^2_+)} &\le C \|f_{,1}\|_{L^\infty(\bbR^2_+)} \|g\|_{L^2(\bbR^2_+)}\,.
\end{align}
\end{subequations}


\subsection{The generalized Gronwall inequality}
In the process of performing the nonlinear estimates, we need the following Gronwall type inequality.
\begin{theorem}\label{thm:Gronwall}
Let $X$ be a non-negative continuous function of $t$, and satisfy that for some positive constants $C$, $M$, $T_1$ and polynomial $\P$,
$$
X(t) \le M + C t \P\big(X(t)\big) \qquad\Forall t\in [0,T_1]\,.
$$
Then there is a $T \in (0,T_1]$ such that $X(t) \le 2 M$ for all $t\in [0,T]$.
\end{theorem}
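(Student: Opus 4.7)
The plan is a standard continuity (bootstrap) argument based on the intermediate value theorem. Since $X$ is continuous and the hypothesis gives a bound that is only useful as long as $X(t)$ stays in a region where we can control $\P(X(t))$, the natural strategy is to fix the target bound $2M$ in advance, freeze $\P$ at the value $\P(2M)$, and then choose $T$ small enough so that the time-linear term $CT\P(2M)$ cannot push $X$ past $2M$.

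Concretely, I would set
$$
T \;=\; \min\!\Big\{T_1,\; \tfrac{M}{2C\,\P(2M)}\Big\},
$$
and argue by contradiction. Suppose the conclusion fails on $[0,T]$, i.e.\ the set $E=\{t\in[0,T]\,:\,X(t)>2M\}$ is nonempty. Since $X(0)\le M+0=M<2M$, continuity of $X$ gives $t^{*}:=\inf E>0$ and $X(t^{*})=2M$, while $X(s)\le 2M$ for every $s\in[0,t^{*}]$. Applying the hypothesis at $t=t^{*}$ and using that $\P$ is a polynomial (in particular monotone on the nonnegative half-line once we restrict to a finite interval—and here we only need $\P(X(t^{*}))=\P(2M)$ directly), we obtain
$$
X(t^{*}) \;\le\; M + C\,t^{*}\,\P(X(t^{*})) \;=\; M + C\,t^{*}\,\P(2M) \;\le\; M + C\,T\,\P(2M) \;\le\; M + \tfrac{M}{2} \;=\; \tfrac{3M}{2}.
$$
This contradicts $X(t^{*})=2M$, so $E=\emptyset$ and $X(t)\le 2M$ throughout $[0,T]$.

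There is no real obstacle here; the only point requiring a little care is that the bound $\P(X(t^{*}))=\P(2M)$ uses the value of $X$ at the first crossing time exactly, which is why we define $t^{*}$ via an infimum and invoke continuity to conclude $X(t^{*})=2M$ rather than $X(t^{*})>2M$. If one wanted to avoid assuming $\P$ is nondecreasing, the argument goes through verbatim because $\P(X(t^{*}))$ is evaluated at the exact value $2M$, not at a supremum. The continuity hypothesis on $X$ is essential to produce the first-crossing time; without it one would need to assume, e.g., right-lower-semicontinuity and argue on an open subset of $[0,T]$.
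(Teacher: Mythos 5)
Your proof is correct, and it is the standard continuity (bootstrap/first-crossing) argument one would expect here. The paper in fact states Theorem~\ref{thm:Gronwall} without supplying a proof, evidently regarding it as routine, so there is no alternative argument in the text to compare against; your approach is exactly the canonical one.

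One minor point worth tightening: the choice $T = \min\{T_1, M/(2C\,\P(2M))\}$ tacitly assumes $\P(2M) > 0$. If $\P(2M) \le 0$, the quantity is undefined or negative, but in that degenerate case the hypothesis at the first-crossing time $t^*$ already gives $2M = X(t^*) \le M + C t^* \P(2M) \le M < 2M$, an immediate contradiction with $T = T_1$. You could absorb this by setting, say, $T = \min\{T_1,\, M/(2C\,(\P(2M))_+ + 1)\}$, or simply remark that in the intended application $\P$ has nonnegative coefficients so $\P(2M) > 0$. Your observation that only the exact value $\P(X(t^*)) = \P(2M)$ is used (so no monotonicity of $\P$ is needed) is the right thing to emphasize.
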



\subsection{The Lagrangian multiplier lemma}
Let
$\displaystyle{}
\rV \equiv \big\{u\in H^1(\rO)\,\big|\, u \in H^1(\Gamma) 
\big\}
$
equipped with norm
\begin{equation}
\|u\|_\rV = \Big[\|u\|^2_{L^2(\rO)} + \|\Def u\|^2_{L^2(\rO)} + \|u'\|^2_{L^2(\Gamma)}\Big]^{1/2} \label{defn:norm_of_U}
\end{equation}
which is induced by the inner product
$$
(u,v)_\rV = (u,v)_{L^2(\rO)} + (\Def u,\Def v)_{L^2(\rO)} + (u',v^\pprime)_{L^2(\Gamma)}\,.
$$
We note that by Korn's inequality, the norm defined by {\rm(\ref{defn:norm_of_U})} is equivalent to the norm
$$
\triplenorm{u} \equiv \|u\|_{H^1(\rO)} + \|u\|_{H^1(\Gamma)}\,.
$$
\begin{lemma}\label{lem:Lagrange_multiplier}
Let $T: \rV \to \bbR$ be a bounded linear functional satisfying that $T(\varphi) = 0$ whenever $\div \varphi = 0$. Then there exists a unique $q \in L^2(\rO)$ such that
$$
T(\varphi) = \big(q,\div \varphi\big)_{L^2(\rO)} \qquad \Forall \varphi \in \rV\,.
$$
Moreover, for some constant $c>0$,
\begin{equation}\label{Lagrange_multiplier_pressure_estimate}
\frac{1}{c}\hspace{.5pt}\|q\|_{L^2(\rO)} \le \|T\|_{\B(\rV,\bbR)} \equiv \sup_{\|\varphi\|_\rV = 1} T(\varphi)\,.
\end{equation}
\end{lemma}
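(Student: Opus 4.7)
The lemma is a standard de Rham--type representation, and the plan is to reduce it to the surjectivity (with bound) of the divergence operator $\div : \rV \to L^2(\rO)$. Since $T$ vanishes on $\ker(\div) \cap \rV$, the map $S : \div(\rV) \to \bbR$ given by $S(\div\varphi) = T(\varphi)$ is well defined on the range of $\div$. If I can produce, for every $f$ in this range, a ``lift'' $\varphi_f \in \rV$ with $\div\varphi_f = f$ and $\|\varphi_f\|_\rV \le c\|f\|_{L^2(\rO)}$, then
\[
|S(f)| \;=\; |T(\varphi_f)| \;\le\; \|T\|_{\B(\rV,\bbR)}\, \|\varphi_f\|_\rV \;\le\; c\,\|T\|_{\B(\rV,\bbR)}\,\|f\|_{L^2(\rO)},
\]
so $S$ extends (Hahn--Banach) to a bounded linear functional on $L^2(\rO)$ with the same bound, and Riesz representation produces a unique $q \in L^2(\rO)$ satisfying $S(f) = (q,f)_{L^2(\rO)}$ for every $f \in L^2(\rO)$. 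This gives the identity $T(\varphi) = (q,\div\varphi)_{L^2(\rO)}$ on all of $\rV$, together with the asserted bound \eqref{Lagrange_multiplier_pressure_estimate}.

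The construction of the lift is the one piece of real content. Given $f \in L^2(\rO)$, I would split it as $f = (f - \bar f) + \bar f$ where $\bar f = |\rO|^{-1}\int_\rO f$ is the mean. For the mean-zero part $f - \bar f$, I invoke the Bogovskii operator on the smooth domain $\rO$: it yields $\varphi_0 \in H^1_0(\rO)$ with $\div\varphi_0 = f - \bar f$ and $\|\varphi_0\|_{H^1(\rO)} \le C\|f - \bar f\|_{L^2(\rO)}$. Because $\varphi_0$ has zero trace on $\Gamma$, its contribution to $\|\cdot\|_\rV$ is just its $H^1(\rO)$ norm, which is already controlled. For the constant part $\bar f$, I take the explicit affine field $\varphi_1(x) = \tfrac{\bar f}{2}\,x$, which satisfies $\div\varphi_1 = \bar f$ and, since $\Gamma$ is smooth, has trace in $H^1(\Gamma)$ with $\|\varphi_1\|_\rV \le C|\bar f| \le C\|f\|_{L^2(\rO)}$. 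Setting $\varphi_f = \varphi_0 + \varphi_1$ gives the required lift, and in particular shows $\div(\rV) = L^2(\rO)$.

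Once the lift exists, the remaining pieces are routine. Uniqueness of $q$ follows because if $(q_1 - q_2,\div\varphi) = 0$ for every $\varphi \in \rV$, then testing against $\div\varphi = q_1 - q_2$ (using surjectivity again) forces $q_1 = q_2$ in $L^2(\rO)$. The estimate \eqref{Lagrange_multiplier_pressure_estimate} is inherited directly from the Hahn--Banach extension and the Riesz isometry.

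The only step that is not formal is the existence of the bounded right inverse of $\div$, but the boundary term is actually easier here than in the classical $H^1_0$ setting because the test space $\rV$ permits nonzero (indeed $H^1(\Gamma)$) traces, so I do not need the usual mean-zero compatibility; any $f \in L^2(\rO)$ can be hit. I do not foresee a genuine obstacle, and I expect the entire argument to occupy little more than the above outline once Bogovskii's theorem on the smooth domain $\rO$ is cited.
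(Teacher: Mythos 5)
Your argument is correct and reaches the same conclusion as the paper, but it organizes the functional-analytic part differently. The one piece of real content you identify — the bounded right inverse of $\div : \rV \to L^2(\rO)$, built from a Bogovskii-type solution for the mean-zero part plus the affine field $\frac{\bar f}{2}\,x$ for the mean — is precisely what the paper also constructs (the paper solves $\div v = p-\bar p$ with $v|_\Gamma=0$ and then adds $\frac{\bar p}{2}(x,y)$). Where you diverge is in how the multiplier $q$ is produced. You pass to the quotient functional $S(\div\varphi)=T(\varphi)$, which is well-defined because $T$ kills $\ker(\div)\cap\rV$, observe that the lift makes $S$ bounded on all of $L^2(\rO)$, and then apply Riesz in $L^2(\rO)$. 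The Hahn--Banach step you mention is actually superfluous, since your lift already shows $\div(\rV)=L^2(\rO)$, so $S$ is defined and bounded on the full space from the start. The paper instead works entirely inside $\rV$: it introduces $Q:L^2(\rO)\to\rV$ via Riesz representation of $p\mapsto (p,\div\varphi)$, uses the same lift to show $Q$ is bounded below (hence injective with closed range), decomposes $\rV=\rR(Q)\oplus\rR(Q)^\perp$, identifies $\rR(Q)^\perp$ with the divergence-free fields, and then reads off $q$ by projecting the Riesz representative of $T$. Your route is shorter and more transparent for the statement at hand; the paper's route has the side benefit of exhibiting the orthogonal decomposition of $\rV$ into divergence-free fields and gradients-of-pressures, which is conceptually useful elsewhere even though it is not strictly needed to prove the lemma. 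Both give the same bound $\|q\|_{L^2(\rO)} \le c\,\|T\|_{\B(\rV,\bbR)}$, with $c$ the operator norm of the right inverse of $\div$.
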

\begin{proof}
For any given $p\in L^2(\rO)$, define $L_p(\varphi) = \big(p,\div \varphi\big)_{L^2(\rO)}$. Then $L_p: \rV \to \bbR$ is a bounded linear functional. By the Riesz representation theorem, there exists $Q p \in \rV$ such that
$$
L_p(\varphi) = (Q p,\varphi)_\rV \qquad\Forall \varphi\in \rV
$$
and $\|Q p\|_\rV = \|L_p\|_{\B(\rV,\bbR)} \le c_1 \|p\|_{L^2(\rO)}$. On the other hand, for any $p\in L^2(\rO)$, there exists $u_p\in \rV$ such that
\begin{equation}\label{div_prob}
\div u_p = p \qquad\text{in}\quad \rO
\end{equation}
and satisfies $\|u_p\|_\rV \le c_2 \|p\|_{L^2(\rO)}$. In fact, with $\bar{p}$ denoting the average of $p$ over $\rO$; that is, $\bar{p} \equiv \smallexp{$\displaystyle{}\frac{1}{|\rO|}\int_\rO$} p\, dx$, if $v$ solves
\begin{alignat*}{2}
\div v &= p - \bar{p} \qquad&&\text{in}\quad\rO\,,\\
v &= 0 &&\text{on}\quad\Gamma\,,
\end{alignat*}
and satisfies $\|v\|_{H^1(\rO)} \le C \|p - \bar{p}\|_{L^2(\rO)}$, then $u_p(x,y) \equiv v(x,y) + \smallexp{$\displaystyle{}\frac{\bar{p}}{2}$}(x,y)$ belongs to $\rV$ and satisfies (\ref{div_prob}) and
$$
\|u_p\|_{H^1(\rO)} \le \|v\|_{H^1(\rO)} + C |\bar{p}| \le C \|p\|_{L^2(\rO)}\,.
$$
Therefore,
$$
\|p\|^2_{L^2(\rO)} = (p, \div u_p)_{L^2(\rO)} = (Q p,u_p)_\rV \le \|Q p\|_\rV \|u_p\|_\rV \le c_2 \|Qp\|_\rV \|p\|_{L^2(\rO)}
$$
which implies that $\|p\|_{L^2(\rO)} \le c_2 \|Q p\|_\rV$ for all $p\in L^2(\rO)$. As a consequence, 
$$
\frac{1}{c_2} \|p\|_{L^2(\rO)} \le \|Q p\|_\rV \le c_1 \|p\|_{L^2(\rO)} \qquad \Forall p\in L^2(\rO)\,,
$$
so $Q:L^2(\rO) \to \rV$ is one-to-one, and $\rR(Q)$, the range of $Q$, is closed. Therefore, $\rV = \rR(Q)\oplus_\rV \rR(Q)^\perp$. Let $\rP:\rV \to \rR(Q)$ be the projection. Then for any $u\in \rV$,
$$u = \rP u + (\id - \rP) u\,,$$
where $(\id - \rP)u \in \rR(Q)^\perp$. Moreover,
\begin{align*}
v\in \rR(Q)^\perp &\Leftrightarrow (Q q, v)_\rV = 0 \ \ \Forall q\in L^2(\rO) \Leftrightarrow (q, \div v)_{L^2(\rO)} = 0 \ \ \Forall q\in L^2(\rO) \\
&\Leftrightarrow \div v = 0\,.
\end{align*}
Let $w\in \rV$ be the representation of $T$; that is, $T(\varphi) = (w,\varphi)_\rV$ for all $\varphi\in \rV$. Then $\rP w = Q q$ for some $q\in L^2(\rO)$, and for any given $\varphi \in \rV$,
\begin{align*}
T(\varphi) &= T(\rP \varphi) = (w, \rP \varphi)_\rV = (\rP w, \rP \varphi)_\rV = (Q q, \rP \varphi)_\rV \\
& = (Q q, \varphi)_\rV = L_q(\varphi) = (q, \div \varphi)_{L^2(\rO)}\,.
\end{align*}
Finally, by the solvability of (\ref{div_prob}),
$$
\|q\|_{L^2(\rO)} = \sup_{\|p\|_{L^2(\rO)} = 1} (q,p)_{L^2(\rO)} \le \sup_{\|\varphi\|_\rV \le c_2} (q, \div \varphi)_{L^2(\rO)} = \sup_{\|\varphi\|_\rV \le c_2} T(\varphi)\,,
$$
and (\ref{Lagrange_multiplier_pressure_estimate}) follows immediately.
\end{proof}


\subsection{Elliptic regularity}
In this sub-section we study an important regularity theory which is the foundation of our result and is very similar to the regularity of the steady Stokes problem. The proof is provided in Appendix \ref{app:Stokes}.
\begin{theorem}\label{thm:elliptic_regularity}
Let $a^{jk}_{rs}$ be a $(2,2)$-tensor such that $a^{jk}_{rs} = a^{kj}_{rs} = a^{jk}_{sr}$, and satisfy
\begin{equation}\label{smallness}
\|a^{jk}_{rs} - \lambda_1 \delta^j_k \delta^r_s - \lambda_2 \delta^k_r \delta^j_s\|_{L^\infty(\rO)} \ll 1
\end{equation}
for some positive constants $\lambda_1$ and $\lambda_2$.
\begin{enumerate}
\item
    Suppose that $(w,q)\in \rV \times L^2(\rO)$ is a weak solution to the following elliptic equation
    \begin{subequations}\label{elliptic}
    \begin{alignat}{2}
    - \big[a^{jk}_{rs} w^r_{,j}\big]_{,k} + q_{,s} &= f^s &&\text{in}\quad\rO\,,\\
    \div w &= 0 &&\text{in}\quad\rO\,,\\
    a^{jk}_{rs} w^r_{,j} \rN_k - q\rN_s &= \ve \Delta_0 w^s + g^s \qquad&&\text{on}\quad\Gamma\,;
    \end{alignat}
    \end{subequations}
    that is, $(w,q)$ satisfies the variational formulation
    \begin{equation}\label{elliptic_weak}
    \begin{array}{l}
    \displaystyle{} \big(a^{jk}_{rs} w^r_{,j}, \varphi^s_{,k})_{L^2(\rO)} - (q,\div \varphi)_{L^2(\rO)} + \ve (w^\pprime, \varphi^\pprime)_{L^2(\Gamma)} \vspace{.2cm}\\
    \displaystyle{} \hspace{60pt} = (f, \varphi)_{L^2(\rO)} + (g,\varphi)_{L^2(\Gamma)} \qquad \Forall \varphi \in \rV\,.
    \end{array}
    \end{equation}
    Then $(w,q) \in H^2(\rO) \times H^1(\rO)$, and there are constants $C$ and $C_\ve$ such that
    \begin{equation}\label{elliptic_est}
    \begin{array}{l}
    \displaystyle{} \hspace{35pt}\|w\|^2_{H^2(\rO)} \hspace{-1pt}+\hspace{-1pt} \ve \|w\|^2_{H^2(\Gamma)} \hspace{-1pt}+\hspace{-1pt} \|q\|^2_{H^1(\rO)} \hspace{-1pt}\le\hspace{-1pt} C_\ve \|g\|^2_{L^2(\Gamma)} \hspace{-1pt}+\hspace{-1pt} C \big(1 \hspace{-1pt}+\hspace{-1pt} \|a\|^2_{L^\infty(\rO)}\big) \times \vspace{.2cm}\\
    \displaystyle{} \hspace{60pt} \times \Big[\big(1 \hspace{-1pt}+\hspace{-1pt} \|a\|^2_{W^{1,\infty}(\rO)} \big)\|w\|^2_{H^1(\rO)} \hspace{-1pt}+\hspace{-1pt} \|f\|^2_{L^2(\rO)} \hspace{-1pt}+\hspace{-1pt} \|g\|^2_{H^{-0.5}(\Gamma)} \Big] \,.
    \end{array}
    \end{equation}
\item Suppose that $a \in W^{1,4}(\rO)$, and $(w,q)\in H^1_0(\rO) \times L^2(\rO)$ is a weak solution to the following elliptic equation
    \begin{subequations}\label{elliptic_Dirichlet}
    \begin{alignat}{2}
    - \big[a^{jk}_{rs} w^r_{,j}\big]_{,k} + q_{,s} &= f^s &&\text{in}\quad\rO\,,\\
    \div w &= 0 &&\text{in}\quad\rO\,,\\
    w &= 0 \qquad&&\text{on}\quad\Gamma\,;
    \end{alignat}
    \end{subequations}
    that is,
    \begin{equation}\label{elliptic_weak_Dirichlet}
    \ \ \big(a^{jk}_{rs} w^r_{,j}, \varphi^s_{,k})_{L^2(\rO)} - (q,\div \varphi)_{L^2(\rO)} = (f, \varphi)_{L^2(\rO)} \quad\ \Forall \varphi \in H^1_0(\rO) \,.
    \end{equation}
    Then $(w,q) \in H^2(\rO) \times H^1(\rO)$ satisfies
    \begin{equation}\label{elliptic_est2}
    \qquad\qquad\ \ \|w\|^2_{H^2(\rO)} + \|q\|^2_{H^1(\rO)} \le C \Big[1 + \|f\|^2_{L^2(\rO)} + \|\nabla a\|^4_{L^4(\rO)} \|w\|^2_{H^1(\rO)} \Big] \,.
    \end{equation}
\end{enumerate}
\end{theorem}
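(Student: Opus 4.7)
The idea is to view the leading operator as a small perturbation of the constant-coefficient Stokes operator $-\lambda_1\Delta w-\lambda_2\nabla\div w+\nabla q$; the smallness condition (\ref{smallness}) is exactly what lets the perturbative terms be absorbed on the left-hand side of the a priori estimate. Regularity of $w$ is upgraded from $H^1$ to $H^2$ by the tangential difference quotient method, implemented through the horizontal convolution-by-layers operator $\Lambda_\eps$ of Section \ref{sec:horizontal_convolution}; the commutator bounds (\ref{comm_est}a,b) are then used to commute $\Lambda_\eps$ past the variable coefficients without paying any factor of $\eps^{-1}$.

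For Part (1), I would first introduce a finite partition of unity on $\overline{\rO}$. On interior charts, the classical Stokes $H^2\times H^1$ regularity for the frozen-coefficient operator, combined with (\ref{smallness}) to absorb the perturbation, delivers the desired bound. On a boundary chart, I flatten $\Gamma$ to $\bbR\times\{0\}$, multiply the solution by a cut-off $\chi$, and substitute the test function $\varphi=-\Lambda_\eps(\chi^2\Lambda_\eps \tilde w_{,1})$ into (\ref{elliptic_weak}); after expanding, using (\ref{comm_est}) to move $\Lambda_\eps$ past $a$ and $\chi$, and invoking (\ref{smallness}) to absorb the resulting quadratic bad terms, one obtains an $\eps$-independent control of one additional tangential derivative of $\tilde w$. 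The second normal derivative is recovered algebraically: incompressibility gives $\tilde w^2_{,2}=-\tilde w^1_{,1}$, and the coefficient of $\tilde w_{,22}$ in the bulk equation is, by (\ref{smallness}), a small perturbation of $\lambda_1\id$ and hence invertible, so $\tilde w_{,22}$ is expressible through $f$, $q$, and already-controlled derivatives. Once $w\in H^2(\rO)$, the equation yields $\nabla q\in L^2(\rO)$, and the $L^2$-bound on $q$ itself comes from Lemma \ref{lem:Lagrange_multiplier}. The boundary pairing $(g,\varphi)_{L^2(\Gamma)}$ is handled two different ways: against a generic $H^{0.5}(\Gamma)$ trace it contributes $\|g\|^2_{H^{-0.5}(\Gamma)}$ with $\ve$-independent constant, whereas in the further test identity used to extract $\ve\|w\|^2_{H^2(\Gamma)}$ from the surface term $\ve(w^\pprime,\varphi^\pprime)_{L^2(\Gamma)}$, one must pair $g$ against an $L^2(\Gamma)$ function, producing the $C_\ve\|g\|^2_{L^2(\Gamma)}$ term of (\ref{elliptic_est}).

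For Part (2), the Dirichlet case is simpler because the surface term disappears and plain tangential difference quotients near $\Gamma$ suffice; the only new feature is that $a$ is merely in $W^{1,4}(\rO)$, so the commutator term $\nabla a\cdot\nabla w$ arising when tangential derivatives are moved onto $a$ must be treated more carefully. In two dimensions, H\"older's inequality combined with the Gagliardo-Nirenberg interpolation $\|\nabla w\|_{L^4(\rO)}\le C\|w\|^{1/2}_{H^1(\rO)}\|w\|^{1/2}_{H^2(\rO)}$ gives $\|\nabla a\cdot\nabla w\|_{L^2(\rO)}\le \|\nabla a\|_{L^4(\rO)}\|w\|^{1/2}_{H^1(\rO)}\|w\|^{1/2}_{H^2(\rO)}$, and Young's inequality then produces precisely the term $\|\nabla a\|^4_{L^4(\rO)}\|w\|^2_{H^1(\rO)}$ of (\ref{elliptic_est2}). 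The principal obstacle throughout is keeping every constant independent of the mollification parameter $\eps$ so that the limit $\eps\to 0$ can be taken; this is where the sharpness of (\ref{comm_est}a,b) is essential and where one must scrupulously avoid any stray factor of $\eps^{-1}$ from a misapplied integration by parts, particularly when the $\div$-free constraint on $w$ is not available for $\Lambda_\eps\chi^2\Lambda_\eps\tilde w_{,1}$ and the pressure $q$ must be carried through every line of the computation.
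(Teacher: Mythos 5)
Your Part (1) is essentially the paper's own argument: basic energy estimate by testing with $w$ plus Lemma \ref{lem:Lagrange_multiplier} for $\|q\|_{L^2}$, mollified second-order tangential test functions with the commutator bounds (\ref{comm_est}) to control two tangential derivatives uniformly in $\eps$, smallness condition (\ref{smallness}) to absorb the perturbative quadratic terms, and recovery of the normal derivatives of $(w,q)$ algebraically from the bulk equations and the divergence constraint. Two small imprecisions worth noting: your proposed boundary test function $\varphi=-\Lambda_\eps(\chi^2\Lambda_\eps\tilde w_{,1})$ has one $\partial_1$ too few compared with what is actually needed (the paper uses $\big[\theta^s_{m,\ell}\,\Lambda_\eps\big(\widetilde\chi^2(\Lambda_\eps(\wfa^\ell_j\widetilde w^j)_{,1})\big)_{,1}\big]\circ\theta_m^{-1}$, with an extra $_{,1}$ outside as well as the pull-back factors $\wfa^\ell_j$ and $\theta^s_{m,\ell}$ that are needed to keep the divergence-free structure intact in the chart); and the ``incompressibility gives $\tilde w^2_{,2}=-\tilde w^1_{,1}$'' shortcut is only correct in a flat chart — in the general chart the constraint is $\wfa^j_i\widetilde w^i_{,j}=0$, and the paper solves a $3\times 3$ algebraic system (\ref{rearrangement}) coupling both components of $w_{,22}$ together with $q_{,2}$, whose determinant is a perturbation of $[(\wfa^2_1)^2+(\wfa^2_2)^2]^2$. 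Neither of these affects the soundness of your strategy.

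Part (2), however, has a genuine gap. You observe, correctly, that after moving tangential derivatives onto $a$ the commutator produces a term bounded by $\|\nabla a\|_{L^4}\|\nabla w\|_{L^4}\|\nabla D_\tau w\|_{L^2}$ and that interpolation plus Young then yields $\|\nabla a\|^4_{L^4}\|w\|^2_{H^1}$. This is a valid a priori computation, but it cannot be closed at the difference-quotient level starting only from $(w,q)\in H^1_0\times L^2$: in two dimensions $H^1\not\hookrightarrow W^{1,4}$, so $\|\nabla w\|_{L^4}$ is not a priori finite, and the interpolation $\|\nabla w\|_{L^4}\lesssim\|w\|^{1/2}_{H^1}\|w\|^{1/2}_{H^2}$ uses the very quantity you are trying to control. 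The paper avoids this circularity by a second layer of approximation: mollify the coefficient to $a^\ve=\eta_\ve*(\rE a)\in W^{1,\infty}(\rO)$, solve the $a^\ve$-problem, invoke the Part (1) machinery (with $\ve=0$, $g=0$) to conclude $(w^\ve,q^\ve)\in H^2(\rO)\times H^1(\rO)$, and only then — with $w^\ve\in H^2$ already known, so one may integrate by parts before moving the convolution around — derive the bound (\ref{wq_est_Dirichlet_temp}) whose right-hand side is uniform in $\ve$, finally passing $\ve\to 0$. Your proposal conflates this coefficient-mollification parameter with the tangential convolution parameter and, as written, would not prove $w\in H^2(\rO)$, only a conditional estimate assuming $w\in H^2$ a priori.
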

The following corollary is a direct consequence of the second part of the theorem above.
\begin{corollary}\label{cor:elliptic_regularity}
Suppose that $a\in W^{1,4}(\rO)$, and $(w,q)\in H^2(\rO) \times H^1(\rO)$ is a solution to the following elliptic equation
\begin{alignat*}{2}
- \big[a^{jk}_{rs} w^r_{,j}\big]_{,k} + q_{,s} &= f^s &&\text{in}\quad\rO\,,\\
\div w &= g &&\text{in}\quad\rO\,,\\
w &= h \qquad&&\text{on}\quad\Gamma\,;
\end{alignat*}
for some $f\in L^2(\rO)$, $g\in H^1(\rO)$ and $h \in H^{1.5}(\Gamma)$ with $\smallexp{$\displaystyle{}\int_\Gamma$}\, h\cdot \rN\, dS = \smallexp{$\displaystyle{}\int_\rO$} g\, dx$. Then $(w,q) \in H^2(\rO) \times H^1(\rO)$, and satisfies
\begin{equation}\label{elliptic_est3}
\begin{array}{l}
\|w\|^2_{H^2(\rO)} + \|q\|^2_{H^1(\rO)} \le C \Big[1 + \|f\|^2_{L^2(\rO)} + \|g\|^2_{H^1(\rO)} + \|\nabla a\|^4_{L^4(\rO)} \|\nabla w\|^2_{L^2(\rO)} \vspace{.2cm}\\
\hspace{112pt} +\,\|h\|^2_{H^{1.5}(\Gamma)} + \|\nabla a\|^2_{L^4(\rO)} \|h\|^2_{H^1(\Gamma)} \Big]\,.
\end{array}
\end{equation}
\end{corollary}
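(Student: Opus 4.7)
The plan is to reduce the general inhomogeneous problem to the Dirichlet, divergence-free setting of part (2) of Theorem \ref{thm:elliptic_regularity} by lifting the boundary datum $h$ and the divergence $g$. First I would construct $W \in H^2(\rO)$ satisfying $W|_\Gamma = h$ and $\div W = g$ in $\rO$; such a $W$ exists precisely because of the compatibility assumption $\int_\Gamma h\cdot \rN\, dS = \int_\rO g\, dx$. Concretely, let $\Phi \in H^2(\rO)$ be the harmonic extension of $h$, which by standard elliptic regularity satisfies $\|\Phi\|_{H^2(\rO)} \le C\|h\|_{H^{1.5}(\Gamma)}$; then apply the Bogovski\u{\i} operator to the mean-zero function $g - \div\Phi \in H^1(\rO)$ to obtain $Z \in H^2(\rO) \cap H^1_0(\rO)$ with $\div Z = g - \div\Phi$ and $\|Z\|_{H^2(\rO)} \le C(\|g\|_{H^1(\rO)} + \|h\|_{H^{1.5}(\Gamma)})$, and set $W = \Phi + Z$.

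Next, set $\tilde w = w - W \in H^2(\rO) \cap H^1_0(\rO)$. Then $\div \tilde w = 0$, $\tilde w|_\Gamma = 0$, and the pair $(\tilde w, q)$ satisfies
\begin{alignat*}{2}
-\big[a^{jk}_{rs} \tilde w^r_{,j}\big]_{,k} + q_{,s} &= f^s + \big[a^{jk}_{rs} W^r_{,j}\big]_{,k} \equiv \tilde f^s \qquad&&\text{in}\quad\rO,\\
\div \tilde w &= 0 &&\text{in}\quad\rO,\\
\tilde w &= 0 &&\text{on}\quad\Gamma.
\end{alignat*}
Applying part (2) of Theorem \ref{thm:elliptic_regularity} to $(\tilde w, q)$ yields
$$
\|\tilde w\|_{H^2(\rO)}^2 + \|q\|_{H^1(\rO)}^2 \le C\Big[1 + \|\tilde f\|_{L^2(\rO)}^2 + \|\nabla a\|_{L^4(\rO)}^4 \|\tilde w\|_{H^1(\rO)}^2\Big].
$$
I would then expand $\big[a^{jk}_{rs} W^r_{,j}\big]_{,k} = a^{jk}_{rs,k} W^r_{,j} + a^{jk}_{rs} W^r_{,jk}$ and estimate using H\"older's inequality together with the 2D Sobolev embedding $H^{1/2}(\rO) \hookrightarrow L^4(\rO)$, producing a bound for $\|\tilde f\|_{L^2}^2$ in terms of $\|f\|_{L^2}^2$, $\|a\|_{L^\infty}^2 \|W\|_{H^2}^2$, and $\|\nabla a\|_{L^4}^2 \|\nabla W\|_{L^4}^2$. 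Substituting the lift estimates on $W$ and using $\|w\|_{H^2}^2 \le 2\|\tilde w\|_{H^2}^2 + 2\|W\|_{H^2}^2$ and $\|\nabla \tilde w\|_{L^2} \le \|\nabla w\|_{L^2} + \|\nabla W\|_{L^2}$ converts the estimate back to one on $w$ and yields (\ref{elliptic_est3}).

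The main technical point is producing the \emph{sharp} form of the cross term $\|\nabla a\|_{L^4(\rO)}^2 \|h\|_{H^1(\Gamma)}^2$ rather than the cruder $\|\nabla a\|_{L^4(\rO)}^2 \|h\|_{H^{1.5}(\Gamma)}^2$ one would obtain from the naive bound $\|\nabla W\|_{L^4} \le C\|W\|_{H^2}$. The refinement relies on the fact that $\nabla W \in L^4(\rO)$ requires only $W \in H^{3/2}(\rO)$ in two dimensions, while $H^{3/2}(\rO)$-regularity of the lift requires only $H^1(\Gamma)$-regularity of $h$ by trace theory; pairing the $\nabla a$ factor with this lower-regularity piece of the lift, while reserving the full $H^{1.5}(\Gamma)$-regularity of $h$ for the $\|W\|_{H^2(\rO)}^2$ contribution (which carries only a constant coefficient), and absorbing any spurious $\|g\|_{H^1(\rO)}$ cross terms via Young's inequality into $\|g\|_{H^1(\rO)}^2 + \|\nabla a\|_{L^4(\rO)}^4$, delivers the stated estimate.
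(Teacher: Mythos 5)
Your proposal is correct and is the natural way to make precise what the paper dismisses as a ``direct consequence'' of part (2) of Theorem~\ref{thm:elliptic_regularity}: the paper supplies no proof of Corollary~\ref{cor:elliptic_regularity}, and the lift--subtract reduction you describe (harmonic extension of $h$ plus a Bogovski\u{\i} correction of the mean-zero divergence defect, then invoking the homogeneous Dirichlet estimate for $\tilde w = w - W$) is precisely the intended argument.

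One remark worth recording. When you transfer the $\|\nabla a\|_{L^4}^4\|\tilde w\|_{H^1}^2$ term of \eqref{elliptic_est2} back to $w$ (using Poincar\'e for $\tilde w \in H^1_0$), you pick up an extra cross term $\|\nabla a\|_{L^4}^4\|\nabla W\|_{L^2}^2$, which is controlled by $\|\nabla a\|_{L^4}^4\big(\|h\|_{H^{0.5}(\Gamma)}^2 + \|g\|_{L^2(\rO)}^2\big)$; this term does not visibly fall under any of the summands on the right-hand side of \eqref{elliptic_est3} as stated, and Young's inequality alone does not absorb it into $\|g\|_{H^1}^2$ or $\|\nabla a\|_{L^4}^2\|h\|_{H^1(\Gamma)}^2$ without further hypotheses. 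This appears to be an imprecision in the corollary's stated bound rather than a defect of your argument, and it is immaterial for the paper's use of the corollary (Section~5.4), where the tensor $a$ is constant so $\nabla a = 0$ and all such cross terms vanish. Your observation about obtaining the sharp $\|\nabla a\|_{L^4}^2\|h\|_{H^1(\Gamma)}^2$ term by pairing $\nabla a$ with the $H^{3/2}$-regularity of the lift (rather than the full $H^2$-regularity) is a genuine refinement and exactly right.
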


\begin{remark}
When $a^{jk}_{rs} = \delta^j_k \delta^r_s$, Part {\rm(2)} of Theorem {\rm\ref{thm:elliptic_regularity}} is the {\rm(}lowest order{\rm)} regularity result for the steady Stokes equations. We also note that condition {\rm(\ref{smallness})} can also be replaced by the usual ellipticity assumption
$$
a^{jk}_{rs} \xi^{j}_{r} \xi^{k}_{s} \ge \lambda_1 \xi^i_j \xi^i_j + \lambda_2 \xi^i_j \xi^j_i
$$
without any modifications of the proof. Moreover, the condition $a\in W^{1,\infty}(\rO)$ in part {\rm(1)} can be relaxed to $a\in W^{1,4}(\rO)$; however, it is not used in our paper, so we omit the detail.
\end{remark}

\section{Main Theorem}\label{sec:main_thm}
Now we state the main result that we establish in this paper.
\begin{theorem}\label{thm:main}
For any bounded smooth domain $\rO\subseteq \bbR^2$, there is a number $\varsigma > 0$ such that for all $u_0 \in H^1(\Omega)$ and $h_0 \in H^2(\Gamma)$ satisfying $\|h_0\|_{H^{1.7}(\Gamma)} < \varsigma$, there exists a unique solution $(v,h,q)\in \V(\rT)\times \H(\rT) \times \Q(\rT)$ to equation {\rm(\ref{NSALE})} for some $\rT>0$. Moreover, $(v,h,q)$ satisfies
\begin{equation}
\|v\|_{\V(\rT)} + \|q\|_{\Q(\rT)} + \|h\|_{\H(\rT)} \le C \big[1 + \|u_0\|_{H^1(\rO)} + \|h_0\|_{H^2(\Gamma)}\big] \,. \label{main_energy_estimate}
\end{equation}
\end{theorem}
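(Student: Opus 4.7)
The plan is to follow the road-map laid out in Section \ref{sec:introduction}: build an approximate solution depending on a smoothing parameter $\ve > 0$, derive estimates uniform in $\ve$, and pass to the limit. I would work with the modified formulation (\ref{NSnew}) rather than (\ref{NSALE}), because $w = \rJ \rA^\rT v$ is divergence-free and the pressure appears as a pure gradient in (\ref{NSnew}a$^\prime$), which makes Lemma \ref{lem:Lagrange_multiplier} and the Stokes-type regularity of Theorem \ref{thm:elliptic_regularity} directly applicable.

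\textbf{Regularized construction.} For fixed $\ve > 0$ I would replace the boundary condition $\ell_\psi(w,q) = \sigma (\rH \circ \psi) \rN$ in (\ref{NSnew}c) by $\ell_\psi(w,q) - \ve \Delta_0 w = \sigma (\rH \circ \psi) \rN$, where $\Delta_0$ is the arc-length Laplacian on $\Gamma$; this is exactly the setting of part (1) of Theorem \ref{thm:elliptic_regularity}, and produces $w \in L^2(0,\rT; H^2(\Gamma))$, i.e.\ in the space $\W(\rT)$. The extra boundary smoothness makes (\ref{NSnew}f) solvable in $\H_1(\rT)$ because the trace $w \cdot \rN$ now lies in $L^2_t H^2(\Gamma)$, which is the regularity mismatch that obstructs a direct fixed-point argument in the unregularized system (cf.\ the issues described in Section \ref{sec:difficulties}). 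A solution to the regularized system is then built by contraction on $\W(\rT^\ve) \times \Q(\rT^\ve) \times \H_1(\rT^\ve)$ for some small $\rT^\ve$: given $(\bar\psi,\bar h)$, solve the linear Stokes-type boundary value problem for $(w,q)$ via Theorem \ref{thm:elliptic_regularity}, integrate (\ref{NSnew}f) forward in time to obtain the new $h$, and recover $\psi$ by the harmonic extension (\ref{NSnew}d,e). The contraction estimate uses (\ref{duality_ineq}) on the curvature term (which lives in $H^{0.5}(\Gamma)$) together with standard trace and product estimates, and the commutator bounds (\ref{comm_est}) for any horizontal smoothing of the geometric coefficients.

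\textbf{Uniform-in-$\ve$ estimates and passage to the limit.} To extend $\rT^\ve$ to a $\ve$-independent interval and to obtain (\ref{main_energy_estimate}), I would test the time-differentiated (\ref{NSnew}a$^\prime$) with $w_t$, using the divergence-form structure of $\rL_\psi$ to recover the natural energy dissipation, and then use Corollary \ref{cor:elliptic_regularity} to upgrade $w$ to $L^2_t H^2(\rO)$ and $q$ to $\Q(\rT)$. The surface-tension contribution on $\Gamma$, after integration by parts in arc-length, is controlled through the explicit form (\ref{defn:H}) together with (\ref{duality_ineq}); equation (\ref{NSnew}f) combined with harmonic-extension bounds for $\psi$ then promotes $h$ to $\H(\rT)$. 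The boundary-viscosity term $\ve \Delta_0 w$ appears with a favorable sign in the energy identity and is simply discarded, while the $\ve$-dependent constants $C_\ve$ in (\ref{elliptic_est}) multiply only the lower-order piece $g = \sigma (\rH \circ \psi) \rN$ which enters through its $H^{-0.5}(\Gamma)$ norm. Assembling these pieces yields an inequality of the form $X(t) \le M + C t\, \P\big(X(t)\big)$, with $X$ the combined energy and $M = C[1 + \|u_0\|_{H^1(\rO)} + \|h_0\|_{H^2(\Gamma)}]$, so Theorem \ref{thm:Gronwall} gives the uniform $\rT > 0$ and the bound (\ref{main_energy_estimate}). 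Passage to the limit uses weak-$\ast$ compactness of $w^\ve$ in $\V(\rT)$, strong compactness in $L^2_t H^1(\rO)$ via Aubin--Lions (since $\partial_t w^\ve$ is bounded in $L^2_t L^2$ and $w^\ve$ in $L^2_t H^2$), and uniform $H^2(\Gamma)$ convergence of $h^\ve$ coming from (\ref{h_H2_est1}); this is enough to close the nonlinear geometric coefficients $\rJ^{-1} (\nabla \psi)^\rT (\nabla \psi)$ and the curvature. Uniqueness follows from the standard difference-of-solutions energy argument.

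\textbf{Main obstacle.} The principal difficulty is the delicate balance of the regularization: $\ve \Delta_0 w$ must be strong enough that the fixed-point map lands in $\W(\rT^\ve)$ --- where $w \cdot \rN \in L^2_t H^2(\Gamma)$ makes the evolution (\ref{NSnew}f) tractable in $\H_1(\rT^\ve)$ --- yet weak enough that none of the $\ve$-dependent pieces survive in the uniform estimate. Combined with the fact that $v$ is only $H^2$ in space (so $u$ is not Lipschitz, and every nonlinear interaction sits at the margin of Sobolev embedding), this forces essentially every product to be handled by the sharp multiplication inequality (\ref{duality_ineq}) and every boundary term by the sharp trace calculus. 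Finally, the smallness of $\|h_0\|_{H^{1.7}(\Gamma)}$ is exactly what keeps $1 + \rb_0 h$ uniformly away from zero in (\ref{NSnew}f) via the Sobolev embedding $H^{1.7}(\Gamma) \hookrightarrow L^\infty(\Gamma)$, ensuring that the ODE for $h$ remains regular throughout the time interval produced by Theorem \ref{thm:Gronwall} and that the map $\psi$ stays a diffeomorphism.
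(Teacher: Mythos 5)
Your overall road map (regularize with boundary viscosity and horizontal smoothing, prove $\ve$-independent bounds, invoke Theorem~\ref{thm:Gronwall}, pass to the limit) is the right shape, but the execution has two gaps that the paper works hard to avoid and that would likely sink your argument if you carried it out.

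\textbf{You cannot close the $\ve$-independent estimates in the $w$-variable.} You propose to test the time-differentiated (\ref{NSnew}a$^\prime$) with $w_t$ and then apply the Stokes regularity of Corollary~\ref{cor:elliptic_regularity} to upgrade $w$ to $L^2_t H^2(\rO)$. The paper explicitly explains (in the remark following Lemma~\ref{lem:key_elliptic_estimate}) why this does not work: the uniform geometric regularity one ultimately obtains is only $\bfh\in L^2_t H^{2.5}(\Gamma)$, hence $\nabla\Psi\in L^2_t H^2(\rO)$, so from $\bfw=\bfJ \bfA^\rT\bfv$ the term $D^2(\bfJ\bfA^\rT)\bfv$ in $D^2\bfw$ would require $\bfv\in L^\infty_t L^\infty(\rO)$, which is unavailable. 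One therefore \emph{cannot} have $\bfv$ and $\bfw$ in $L^2_t H^2(\rO)$ simultaneously, and since the theorem is about $v$, the $w$-level $H^2$ estimate is precisely what is unattainable uniformly in $\ve$. The paper's resolution is a deliberate two-step: use the divergence-free $w$-formulation only for the \emph{construction} (so that the Lagrange multiplier lemma and the penalty method apply), then switch back to the ALE variable $\bfv$ for all the $\ve$-independent estimates (Section~\ref{sec:ve_indep_est}), deriving $L^2_t L^2$ control of $\bfv_t$ via a difference-quotient substitute for $\bfv_t$ (which is not itself in $H^1(\rO)$ and so cannot be a test function) and then a separate tangential estimate giving $\bfv\in L^2_t H^{1.5}(\Gamma)$ using horizontal-convolution test functions and the commutator bounds. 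You would need all of this machinery; "testing with $w_t$" is not a substitute.

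\textbf{Contraction vs.\ weak fixed point, and the role of $h_{\ve\ve}$.} You propose a contraction on $\W(\rT^\ve)\times\Q(\rT^\ve)\times\H_1(\rT^\ve)$; the paper instead establishes that the map $\Phi$ takes a convex bounded set $C_{\rT_\ve}(\rM)$ into itself and is weakly sequentially continuous, then invokes the Tychonoff fixed-point theorem. At this regularity level a Lipschitz estimate for $\Phi$ in the fixed-point norm is not available, so contraction is not a safe choice. You also omit the crucial regularization of the geometry itself: the paper replaces $h$ by the double convolution $h_{\ve\ve}$ in the boundary condition $\psi = e + h_{\ve\ve}\rN$ and in the operator $\L_\ve$, which is exactly what makes $\bpsi$ smooth, gives $a\in W^{1,\infty}(\rO)$ so that Theorem~\ref{thm:elliptic_regularity}(1) applies in the construction, and lets the estimates of Section~\ref{sec:Galerkin} go through on the short ($\ve$-dependent) interval $[0,\rT_\ve]$. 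The boundary viscosity $\ve^2\Delta_0 w$ alone, without smoothing the coefficients, would not suffice.

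A final smaller gap: your passage to the limit leans on Aubin--Lions strong compactness of $w^\ve$ in $L^2_t H^1$, but the delicate point is the convergence of the geometric coefficients and of the curvature operator, for which one needs the $L^2_t H^{2.5}(\Gamma)$ and $L^\infty_t H^2(\Gamma)$ control of $h^\ve$ from Lemma~\ref{lem:key_elliptic_estimate} and the continuation argument of Section~\ref{sec:time_continuation}. These should appear explicitly.
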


\begin{remark}
If the initial domain $\Omega$ has smooth boundary, we simply let $\rO = \Omega$ and choose $h_0 = 0$ which certainly satisfies the condition $\|h_0\|_{H^{1.7}(\Gamma)} < \varsigma$.
\end{remark}

\begin{remark}
The fluid velocity in Eulerian coordinate is given by $u = v\circ \psi^{-1}$. Since $(v,\psi)\in L^2(0,\rT;H^2(\rO))\times L^\infty(0,\rT;H^{2.5}(\rO))$, the fluid velocity $u$ belongs to $H^2(\Omega(t))$ for almost all $t\in [0,\rT]$, where $\Omega(t)$ is given by $\Omega(t) \equiv \psi(\Omega,t)$. However, as mentioned in Section {\rm\ref{sec:difficulties}} the existence of the flow map $\eta$ is not guaranteed since $u$ is not Lipschitz in spatial variable. Therefore, our result does not implies the existence of flow map, but implies that the existence of fluid velocity $u$ and fluid domain $\Omega(t) = \psi(\rO, t)$ such that {\rm(\ref{NS}c-f)} holds.
\end{remark}

\begin{remark}
Our theorem can also by applied to the case that $\rO$ is separated by an interface $\Gamma(t)$ such that $\rO = \rO^+(t) \cup \Gamma(t) \cup \rO^-(t)$, where $\Gamma(t) = \bdy \rO^-(t)$.
\end{remark}

\section{The construction of solutions}\label{sec:construction}
\subsection{The regularized problem}
Without loss of generality, we assume that the surface tension $\sigma = 1$. For any fixed $\ve>0$, we consider an approximation of (\ref{NSnew}) given by the following equations:
\begin{subequations}\label{NSreg}
\begin{alignat}{2}
\rJ^{-1} \psi^i_{,s} \psi^i_{,r} w^r_t - [\rL_\psi(w)]^s + q_{,s} &= F^s \qquad&&\text{in}\quad \rO\times (0,T)\,,\\
\div w &= 0\qquad &&\text{in}\quad \rO\times (0,T)\,,\\
\ell_\psi(w,q) &= \L_\ve(h) \rN + \ve^2 \Delta_0 w\qquad&&\text{on}\quad \Gamma\times (0,T)\,,\\
\Delta \psi &= 0 &&\text{in}\quad \rO\times (0,T)\,,\\
\psi &= e + h_{\ve\ve} \rN &&\text{on}\quad \Gamma\times (0,T)\,,\\
h_t &= \frac{w\cdot \rN}{1 \hspace{-1pt}+\hspace{-1pt} \rb_0 h_{\ve\ve}} \qquad&&\text{on}\quad \Gamma\times (0,T)\,,\\
w &= w_{0\ve} &&\text{on}\quad \rO\times \{t=0\}\,,\\
h &= h_{0\ve} &&\text{on}\quad \Gamma\times \{t=0\}\,,
\end{alignat}
\end{subequations}
where as before $\rJ = \det(\nabla \psi)$ and $\rA = (\nabla \psi)^{-1}$, $F$ is given by (\ref{defn:F}), $e$ is the identity map, $h_{\ve \ve} \equiv \eta_\ve \star (\eta_\ve \star h)$ is the double horizontal convolution of $h$, $\Delta_0$ is the Laplace-Beltrami operator defined by
\begin{equation}\label{Laplace-Beltrami_op}
\Delta_0 w = (w\circ \rX)''\circ \rX^{-1} \qquad\text{on}\quad \Gamma\,,
\end{equation}
and $\L_\ve$ is a differential operator defined by
\begin{equation}\label{defn:Le}
\L_\ve(h) \equiv \frac{\big(1 + \rb_0 h_{\ve\ve}\big) h'' - \rb_0 \big[ (1 \hspace{-1pt}+\hspace{-1pt} \rb_0h_{\ve\ve})^2+ 2 h_{\ve\ve}^{\prime\hspace{1pt}2}\big] - h_{\ve\ve} h_{\ve\ve}' \rb_0^\pprime}{\big[\big(1 \hspace{-1pt}+\hspace{-1pt} \rb_0h_{\ve\ve}\big)^2 + h_{\ve\ve}^{\prime\hspace{1pt}2} \big]^{3/2}}\,.
\end{equation}
We note that the solution $(w,q,\psi,h)$ depends on $\ve$.

\subsection{The linearization of the regularized problem}
Define
\begin{align*}
C_\rT(\rM) = \Big\{ (w,h)\in \W(\rT)\times \H_1(\rT)\,\Big|&\, \|w\|_{\W(\rT)} + \|h\|_{\H_1(\rT)} \le \rM,\ w(0) = w_{0\ve} \\
&\ h(0) = h_{0\ve}, \quad h_t(0) = \frac{w_{0\ve} \cdot \rN}{1 + \rb_0 h_{0\ve\ve\ve}} \Big\}
\end{align*}
for some $\rM > 1$ and $\rT<1$ to be determined later. We note that if $\rM \gg 1$ and $\rT>0$ small enough, then $C_\rT(\rM)$ is non-empty since
$$
\Big(w_{0\ve}, h_{0\ve} + \frac{t w_{0\ve} \cdot \rN}{1 + \rb_0 h_{0\ve\ve\ve}}\Big) \in C_\rT(\rM)\,.
$$
Let $(\bw,\bh)\in C_\rT(\rM)$ be given. For a fixed $\ve > 0$, let $\bpsi$ be the solution to
\begin{subequations}\label{bpsi_eq}
\begin{alignat}{2}
\Delta \bpsi &= 0 &&\text{in}\quad\rO\,,\\
\bpsi &= e + \bh_{\ve\ve} \rN \qquad&&\text{on}\quad\Gamma\,,
\end{alignat}
\end{subequations}
where we recall that $\bh_{\ve\ve} \equiv \eta_\ve\star (\eta_\ve\star \bh)$. We note that since $\rO$ is smooth and $\bh_{\ve\ve}$ is smooth, $\bpsi$ is a smooth diffeomorphism if $h \ll 1$.

Let $\bA = (\nabla \bpsi)^{-1}$ and $\bJ = \det(\nabla \bpsi)$ be defined according. We consider the following linearization of (\ref{NSreg}):
\begin{subequations}\label{NSregL}
\begin{alignat}{2}
w_t - \rL_\bpsi(w) + \nabla q &= \widebar{\rF} \qquad&&\text{in}\quad \rO\times (0,T)\,,\\
\div w &= 0\qquad &&\text{in}\quad \rO\times (0,T)\,,\\
\ell_\bpsi(w,q) &= \ve^2 \Delta_0 w + \widebar{\rG} \qquad&&\text{on}\quad \Gamma\times (0,T)\,,
\end{alignat}
\end{subequations}
where
\begin{align*}
\widebar{\rF}^s &= (\delta^r_s - \bJ^{-1} \bpsi^i_{,s} \bpsi^i_{,r})\, \bw^r_t - \bpsi^i_{,s} \bA^j_\ell (\bJ^{-1} \bpsi^\ell_{,r} \bw^r - \bpsi^\ell_t) (\bJ^{-1} \bpsi^i_{,s} \bw^s)_{,j} \\
&\quad - \bpsi^i_{,s} (\bJ^{-1} \bpsi^i_{,r})_t \bw^r + (\bpsi^i_{,s} \bA^k_\ell)_{,k} \big[\bA^j_\ell (\bJ^{-1} \bpsi^i_{,r} \bw^r)_{,j} + \bA^j_i (\bJ^{-1} \bpsi^\ell_{,r} \bw^r)_{,j} \big] \,,\\
\widebar{\rG} &= \frac{\big(1 + \rb_0 \bh_{\ve\ve}\big) \bh'' - \rb_0 \big[ (1 \hspace{-1pt}+\hspace{-1pt} \rb_0\bh_{\ve\ve})^2+ 2 \bh_{\ve\ve}^{\prime\hspace{1pt}2}\big] - \bh_{\ve\ve} \bh_{\ve\ve}' \rb_0^\pprime}{\big[\big(1 \hspace{-1pt}+\hspace{-1pt} \rb_0\bh_{\ve\ve}\big)^2 + \bh_{\ve\ve}^{\prime\hspace{1pt}2} \big]^{3/2}} \,.
\end{align*}

\begin{definition}\label{defn:weak_soln_reg}
A vector-valued function $w \in L^2(0,\rT;H^1(\rO))\cap L^2(0,\rT;H^1(\Gamma))$ satisfying $w_t \in L^2(0,\rT;L^2(\rO))$ is said to be a weak solution to {\rm(\ref{NSregL})} if
\begin{equation}\label{weak_reg}
\begin{array}{l}
\displaystyle{} (w_t,\varphi)_{L^2(\rO)} + \rB_\bpsi(w,\varphi) + \ve^2 (w^\pprime,\varphi^\pprime)_{L^2(\Gamma)} \vspace{.2cm}\\
\displaystyle{}\hspace{35pt} = (\widebar{\rF},\varphi)_{L^2(\rO)} + (\widebar{\rG},\varphi)_{L^2(\Gamma)} \quad \ \Forall \varphi\in H^1_\div(\rO)\,,\text{ \ a.e. } t\in[0,T]\,,
\end{array}
\end{equation}
and
$$
w = w_0 \qquad\text{on}\quad \rO\times \{t=0\}\,,
$$
where $\rB_\bpsi:H^1(\rO) \times H^1(\rO) \to \bbR$ is a bilinear form given by
\begin{equation}\label{defn:Bpsi}
\rB_\bpsi(w,\varphi) = \int_\rO \Big[\bA^j_\ell \bpsi^i_{,s} \bA^k_\ell (\bJ^{-1} \bpsi^i_{,r} w^r)_{,j} + \bA^k_\ell (\bJ^{-1} \bpsi^\ell_{,r} w^r)_{,s}\Big]\, \varphi^s_{,k} dx\,,
\end{equation}
and $H^1_\div(\rO) \equiv \big\{v\in H^1(\rO)\,\big|\, \div v = 0\big\}$ is the collection of divergence-free $H^1$ vector-valued functions.
\end{definition}

\subsection{Some a priori estimates}\label{sec:apriori_estimate}
In this sub-section, we establish some estimates concerning the smallness of certain important quantities which will be used throughout the paper. We remark that even though we have better regularity for the input $\bh$, we perform the following estimates under the assumption that $(\bh,\bh_t) \in L^\infty(0,\rT;H^2(\Gamma)) \times L^2(0,\rT;H^{1.4}(\Gamma))$.
\begin{proposition}
For any $\varsigma > 0$ {\rm(}which is an upper bound of $\|h_0\|_{H^{1.7}(\Gamma)}${\rm)}, there exists $\rT_\rM > 0$ such that
\begin{equation}
\|\bh(t)\|_{H^{1.7}(\Gamma)} < \varsigma \qquad\Forall t \in [0,\rT_\rM]\,, \label{smallness_of_bfh}
\end{equation}
\end{proposition}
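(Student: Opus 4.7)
The plan is to combine the fundamental theorem of calculus with an interpolation estimate on $H^s(\Gamma)$. Writing
$$
\bh(t) - h_{0\ve} = \int_0^t \bh_s(\sigma)\, d\sigma \qquad\text{on } \Gamma,
$$
it suffices to show $\|\bh(t) - h_{0\ve}\|_{H^{1.7}(\Gamma)}$ is small for short times; then the triangle inequality together with the strict bound $\|h_{0\ve}\|_{H^{1.7}(\Gamma)} < \varsigma$ (which one obtains from the hypothesis $\|h_0\|_{H^{1.7}(\Gamma)} < \varsigma$ for $\ve$ small enough, since $h_{0\ve} \to h_0$ in $H^{1.7}(\Gamma)$ under the mollification) will deliver (\ref{smallness_of_bfh}).

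For the low-order piece, the assumption $\bh_t \in L^2(0,\rT;H^{1.4}(\Gamma))$ and the Cauchy--Schwarz inequality in time yield
$$
\|\bh(t) - h_{0\ve}\|_{H^{1.4}(\Gamma)} \le \int_0^t \|\bh_s\|_{H^{1.4}(\Gamma)}\, d\sigma \le \sqrt{t}\,\|\bh_t\|_{L^2(0,t;H^{1.4}(\Gamma))} \le \sqrt{t}\,\rM.
$$
For the high-order piece, the assumption $\bh \in L^\infty(0,\rT;H^2(\Gamma))$ (which follows from $(\bw,\bh)\in C_\rT(\rM)$ via (\ref{h_H2_est1})) together with the triangle inequality gives $\|\bh(t) - h_{0\ve}\|_{H^{2}(\Gamma)} \le C(\|h_0\|_{H^2(\Gamma)},\rM)$ uniformly in $t$.

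Since $1.7 = \tfrac{1}{2}(1.4) + \tfrac{1}{2}(2)$, the standard Sobolev interpolation on $\Gamma$ then gives
$$
\|\bh(t) - h_{0\ve}\|_{H^{1.7}(\Gamma)} \le C\,\|\bh(t) - h_{0\ve}\|_{H^{1.4}(\Gamma)}^{1/2}\,\|\bh(t) - h_{0\ve}\|_{H^{2}(\Gamma)}^{1/2} \le C_\rM\, t^{1/4}.
$$
Choosing $\rT_\rM > 0$ small enough that $C_\rM\, \rT_\rM^{1/4}$ is strictly less than the slack $\varsigma - \|h_{0\ve}\|_{H^{1.7}(\Gamma)}$ then yields (\ref{smallness_of_bfh}). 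I do not foresee a genuine obstacle here: the constant $C_\rM$ depends on $\bh$ only through the $\H_1(\rT)$-bound $\rM$ and through $\|h_0\|_{H^2(\Gamma)}$, so the resulting $\rT_\rM$ is automatically uniform over all $(\bw,\bh) \in C_\rT(\rM)$.
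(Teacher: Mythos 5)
Your proof is correct and reaches the same conclusion, but it takes a different route from the paper. The paper works directly with $\frac{d}{dt}\|\bh\|^2_{H^{1.7}(\Gamma)}$, bounding the pairing $\langle \bh, \bh_t\rangle_{H^{1.7}(\Gamma)}$ by $\|\bh\|_{H^2(\Gamma)}\|\bh_t\|_{H^{1.4}(\Gamma)}$ (a Cauchy--Schwarz in Fourier, since $1.7$ is the midpoint of $1.4$ and $2$), and then integrates the resulting differential inequality to get $\|\bh(t)\|^2_{H^{1.7}(\Gamma)} \le \|h_{0\ve}\|^2_{H^{1.7}(\Gamma)} + \sqrt{t}\,\rM^2$. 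You instead bound the increment $\bh(t)-h_{0\ve}$ in $H^{1.4}(\Gamma)$ via the fundamental theorem of calculus, bound it in $H^2(\Gamma)$ uniformly using the $C_\rT(\rM)$ constraint, and interpolate at the exponent $\theta=1/2$. Both arguments rest on the identical pair of ingredients---the $L^\infty_t H^2$ bound on $\bh$ and the $L^2_t H^{1.4}$ bound on $\bh_t$ encoded in $C_\rT(\rM)$---and the underlying two-point Cauchy--Schwarz estimate is the same; the paper's version is a bit shorter and yields $t^{1/2}$ rather than $t^{1/4}$ decay, though this is immaterial since any positive power suffices.

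One small remark worth tightening: you justify $\|h_{0\ve}\|_{H^{1.7}(\Gamma)} < \varsigma$ by appealing to $h_{0\ve} \to h_0$ as $\ve \to 0$, i.e.\ ``for $\ve$ small enough.'' In the context of this proposition, $\rT_\rM$ must be independent of $\ve$ (it is later used as a uniform ceiling for the $\ve$-dependent existence times $\rT_\ve$), so phrasing the slack $\varsigma - \|h_{0\ve}\|_{H^{1.7}(\Gamma)}$ through a smallness requirement on $\ve$ is slightly risky. The cleaner observation---the one the paper implicitly uses when it writes $\|h_{0\ve}\|^2_{H^{1.7}(\Gamma)} \le \|h_0\|^2_{H^{1.7}(\Gamma)}$---is that the horizontal convolution with a nonnegative kernel of unit mass is a Fourier multiplier with symbol bounded by one, hence a contraction on every $H^s(\Gamma)$, so $\|h_{0\ve}\|_{H^{1.7}(\Gamma)} \le \|h_0\|_{H^{1.7}(\Gamma)} < \varsigma$ holds for \emph{every} $\ve>0$, and the slack (hence $\rT_\rM$) is genuinely $\ve$-independent.
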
\label{prop:small_h}
\begin{proof}
By $H^{0.25}$-$H^{-0.25}$ duality,
\begin{align*}
\frac{1}{2} \frac{d}{dt} \|\bh\|^2_{H^{1.7}(\Gamma)} \le \|\bh\|_{H^2(\Gamma)} \|\bh_t\|_{H^{1.4}(\Gamma)} \le \rM \|\bh_t\|_{H^{1.4}(\Gamma)}
\end{align*}
which suggests that
\begin{align*}
\|\bh(t)\|^2_{H^{1.7}(\Gamma)} &\le \|h_{0\ve}\|^2_{H^{1.7}(\Gamma)} + \int_0^t \rM \|\bh_t\|_{H^{1.4}(\Gamma)} d\tilde{t} \\
&\le \|h_0\|^2_{H^{1.7}(\Gamma)} + \sqrt{t} \rM \|\bh_t\|_{L^2(0,t;H^{1.4}(\Gamma))} \le \|h_0\|^2_{H^{1.7}(\Gamma)} + \sqrt{t}\, \rM^2\,.
\end{align*}
Since $\|h_0\|_{H^{1.7}(\Gamma)} < \varsigma$, the inequality above suggests that
$$
\|\bh(t)\|^2_{H^{1.7}(\Gamma)} < \varsigma^2 \qquad\Forall t\in [0,\rT_\rM]\,.
$$
provided that $\rT_\rM>0 $ is chosen small enough.
\end{proof}

\begin{corollary}
There exists $\varsigma \ll 1$ {\rm(}with corresponding $\rT_\rM$ given in Proposition {\rm\ref{prop:small_h}}{\rm)} such that
\begin{equation}\label{smallness_of_JA_m_1}
\|\nabla \bpsi(t) - {\rm Id}\|_{L^\infty(\rO)} + \|\bA(t) - {\rm Id}\|_{L^\infty(\rO)} + \|\bJ(t) - 1\|_{L^\infty(\rO)} \le C \varsigma \quad\Forall t \in [0,\rT_\rM]
\end{equation}
for some generic constant $C$ independent of $\varsigma$. In particular,
\begin{align}
\frac{1}{2} \le \|\bJ(t)\|_{L^\infty(\rO)} \le \frac{3}{2} \qquad\Forall t\in [0,\rT_\rM]\,. \label{upper_lower_bound_for_J}
\end{align}
\end{corollary}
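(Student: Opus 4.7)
The plan is to read off the bound from elliptic regularity for the harmonic extension $\bpsi$ combined with a Sobolev embedding, using the preceding proposition to control the boundary data. Since the identity map $e$ is harmonic with $e|_\Gamma = e$, the difference $\bpsi - e$ satisfies $\Delta(\bpsi-e)=0$ in $\rO$ with boundary value $\bh_{\ve\ve}\rN$ on $\Gamma$. Thus the whole task reduces to showing that this difference is small in $W^{1,\infty}(\rO)$.

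First I would invoke elliptic regularity for the Laplacian on the smooth domain $\rO$ at the $H^{2.2}$-level: since $\rN$ is smooth on $\Gamma$ and horizontal convolution is a contraction on every $H^s(\Gamma)$,
\begin{equation*}
\|\bpsi(t) - e\|_{H^{2.2}(\rO)} \le C\,\|\bh_{\ve\ve}(t)\,\rN\|_{H^{1.7}(\Gamma)} \le C\,\|\bh(t)\|_{H^{1.7}(\Gamma)}\,.
\end{equation*}
Proposition \ref{prop:small_h} then furnishes $\|\bh(t)\|_{H^{1.7}(\Gamma)} < \varsigma$ for all $t\in[0,\rT_\rM]$, so the right-hand side is bounded by $C\varsigma$ uniformly in $t$. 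Because we are in two space dimensions, the Sobolev embedding $H^{2.2}(\rO)\hookrightarrow W^{1,\infty}(\rO)$ holds (as $2.2-1 = 1.2 > n/2 = 1$), and we conclude $\|\nabla\bpsi(t) - \id\|_{L^\infty(\rO)} \le C\varsigma$ for $t\in[0,\rT_\rM]$.

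The estimates on $\bA - \id$ and $\bJ - 1$ are then purely algebraic consequences. Writing $\nabla\bpsi = \id + E$ with $\|E\|_{L^\infty}\le C\varsigma$, we have $\bJ = \det(\id + E) = 1 + \tr E + \det E$, hence $|\bJ - 1|\le C\varsigma$. Similarly, for $\varsigma$ small enough that $C\varsigma < 1/2$, the Neumann series $(\id+E)^{-1} = \sum_{k\ge 0}(-E)^k$ converges in $L^\infty$ and gives $\|\bA - \id\|_{L^\infty} \le 2\|E\|_{L^\infty} \le 2C\varsigma$. Picking $\varsigma$ so that $C\varsigma \le 1/2$ (which also validates the invertibility used above) yields simultaneously \eqref{smallness_of_JA_m_1} and \eqref{upper_lower_bound_for_J}.

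No step is genuinely hard; the only subtlety is a bookkeeping one, namely that we need the boundary regularity $H^{1.7}(\Gamma)$ to lift via elliptic regularity to $H^{2.2}(\rO)$, one-half derivative above the $W^{1,\infty}$ embedding threshold in dimension two. This is precisely why the previous proposition was formulated at the $H^{1.7}$ level rather than merely $H^{3/2}$: it provides exactly the margin needed to pass to $L^\infty$ control of the first derivatives, and hence to the uniform positivity and boundedness of the Jacobian that will be used freely throughout the subsequent sections.
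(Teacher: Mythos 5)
Your proof is correct and follows essentially the same route as the paper: elliptic regularity for the harmonic extension at the $H^{2.2}(\rO)$ level, the Sobolev embedding $H^{1.2}(\rO)\hookrightarrow L^\infty(\rO)$ in two dimensions, and then algebraic control of $\bA-\id$ and $\bJ-1$. The only cosmetic difference is in how you handle the inverse matrix: you invoke the Neumann series for $(\id+E)^{-1}$, whereas the paper writes $\bA-\id=\bA(\id-\nabla\bpsi)$ and runs a short bootstrap to bound $\|\bA\|_{L^\infty}$ first; the two are the same estimate.
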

\begin{proof}
By the Sobolev embedding and elliptic regularity, for $t \in [0,\rT_\rM]$,
$$
\|\nabla \bpsi(t) - \id\|_{L^\infty(\Omega)} \le C \|\nabla \bpsi - \id\|_{H^{1.2}(\rO)} \le C \|\bh\|_{H^{1.7}(\Gamma)} \le C \varsigma\,.
$$
Therefore, since $\bA - \id = \bA (\id - \nabla \bpsi)$,
\begin{equation}
\|\bA(t) - \id\|_{L^\infty(\Omega)} \le C \|\bA\|_{L^\infty(\rO)} \varsigma \qquad\Forall t\in [0,\rT_\rM]. \label{boundedness_of_A}
\end{equation}
On the other hand, $\|\bA\|_{L^\infty(\rO)} - 1 \le \|\bA - \id\|_{L^\infty(\rO)}$; thus
$$
\|\bA\|_{L^\infty(\rO)} \le 1 + C \|\bA\|_{L^\infty(\rO)}\,.
$$
By choosing $\varsigma$ small enough,
\begin{align*}
\|\bA(t)\|_{L^\infty(\Omega)} \le \frac{3}{2}\,.
\end{align*}
The estimate above, together with (\ref{boundedness_of_A}), in turn implies that
\begin{equation}
\|\bA(t) - \id\|_{L^\infty(\Omega)} \le C \varsigma \qquad\Forall t\in [0,\rT_\rM]\,.
\end{equation}
The estimate for $\|\bJ(t) - 1\|_{L^\infty(\rO)}$ is similar since $\bJ = \det(\nabla \bpsi)$.
\end{proof}

\subsection{The construction of solutions to the regularized problem}
In this sub-section, our goal is to establish a map $\Phi: C_\rT(\rM) \to C_\rT(\rM)$ by choosing $\rM \gg 1$ and $\rT\ll 1$, and then show the existence of a fixed-point of the map. This fixed-point then is a strong solution to the regularized problem (\ref{NSreg}).

Since the velocity $w$ we are looking for is divergence-free, there are lots of ways of constructing the solution to the linear problem (\ref{NSregL}). Two typical ways are:
\begin{enumerate}
\item The Galerkin method: let $\{\re_k\}_{k=1}^\infty$ be an orthonomal divergence-free basis in $L^2(\rO)$ which is orthogonal in $H^1(\rO)$, and approximate (\ref{NSregL}) by projecting (\ref{NSregL}a) onto the subspace $\text{span}(\re_1,\cdots,\re_n)$. The projection of (\ref{NSregL}a) then becomes an ODE so the projected problem is solvable. The remaining thing to do is to guarantee the solution $w_n$ to the projected problem has an $n$-independent estimate in appropriate spaces.
\item The penalty method: approximate (\ref{NSregL}) by introducing a penalized parameter $\theta > 0$ and approximate the pressure $q$ in (\ref{NSregL}) by $-\smallexp{$\displaystyle{}\frac{1}{\theta}$}\, \div w_\theta$. Once a $\theta$-independent estimate is obtained, the limit of $w_\theta$ (as $\theta \to 0$) satisfies the divergence-free constraint automatically.
\end{enumerate}
Both methods mentioned above can be used to construct a solution to (\ref{NSregL}); however, it should be clear to the readers that the standard Galerkin method fails to work in constructing solutions to (\ref{NSALE}) since there is no basis in $\rV$ which preserves condition (\ref{NSALE}b) for all $t>0$. In order to explain why the penalty method does not work for the purpose of constructing a solution to (\ref{NSALE}) in the space $\V(\rT)\times \H(\rT)$ either, in the following we construct a solution to (\ref{NSregL}) using the penalty method. We remark here that it is much easier to use the Galerkin method to construct a weak solution to (\ref{NSregL}).

\subsubsection{The penalized problem}
Let $\theta>0$ be a given positive constant. We consider
\begin{subequations}\label{NSpenalty}
\begin{alignat}{2}
w_{\theta t} - \rL_\bpsi(w_\theta) + \nabla q_\theta &= \widebar{\rF} &&\text{in}\quad\rO\times (0,T)\,,\\
\ell_\bpsi(w_\theta, q_\theta) &= \ve^2 \Delta_0 w_\theta + \widebar{\rG} \qquad&&\text{on}\quad\Gamma\times (0,T)\,,\\
w_\theta &= w_0 &&\text{on}\quad\rO\times \{t=0\}\,,
\end{alignat}
\end{subequations}
where $q_\theta = - \smallexp{$\displaystyle{}\frac{1}{\theta}$}\, \div w_\theta$ is the penalized pressure. Similar to Definition \ref{defn:weak_soln_reg}, we have
\begin{definition}
A function $w_\theta \in L^2(0,\rT;\rV)$ satisfying $w_{\theta t} \in L^2(0,\rT;L^2(\rO))$ is said to be a weak solution to {\rm(\ref{NSpenalty})} if
\begin{equation}\label{weak_penalty}
\begin{array}{l}
\displaystyle{} (w_{\theta t},\varphi)_{L^2(\rO)} + \rB_\bpsi(w_\theta,\varphi) - (q_\theta,\div \varphi) + \ve^2 (w_\theta^\pprime,\varphi^\pprime)_{L^2(\Gamma)} \vspace{.2cm}\\
\displaystyle{}\hspace{55pt} = (\widebar{\rF},\varphi)_{L^2(\rO)} + (\widebar{\rG},\varphi)_{L^2(\Gamma)} \qquad \Forall \varphi\in \rV\,,\text{ \ a.e. } t\in[0,T]\,,
\end{array}
\end{equation}
and
$$
w_\theta = w_0 \qquad\text{on}\quad \rO\times \{t=0\}\,.
$$
\end{definition}

We recall that $\rV = \big\{u\in H^1(\rO)\,\big|\, u\in H^1(\Gamma) 
\big\}$, and the space $L^2(0,T;\rV)$ consists of $u:[0,\rT]\to \rV$ such that
$$
\int_0^\rT \|u(t)\|^2_\rV\, dt < \infty\,.
$$
Our goal is to obtain a weak solution to (\ref{NSregL}) by passing the penalized parameter $\theta$ to the weak limit for the weak solution $v_\theta$ to (\ref{NSpenalty}).

\subsubsection{The existence of the unique weak solution to the penalized problem}\label{sec:Galerkin}
The construction of a weak solution to (\ref{NSpenalty}) can be done using the Galerkin method. Let $\{\re_k\}_{k=1}^\infty$ be an orthonormal basis in $L^2(\rO)$ which is orthogonal in $\rV$, and let $w_n = \sum\limits_{k=1}^n d_k(t)\, \re_k(x)$ solve
\begin{equation}\label{ODE}
\begin{array}{l}
\displaystyle{} (w_{nt}, \varphi)_{L^2(\rO)} + \rB_\bpsi(w_n,\varphi) - (q_n, \div \varphi)_{L^2(\rO)} + \ve^2 (w_n^\pprime, \varphi^\pprime)_{L^2(\Gamma)} \vspace{.2cm}\\
\displaystyle{} \hspace{55pt} = (\widebar{\rF},\varphi)_{L^2(\rO)} + (\widebar{\rG},\varphi)_{L^2(\Gamma)} \qquad \ \Forall \varphi\in\text{span}(\re_1,\cdots,\re_n)
\end{array}
\end{equation}
with the initial condition
\begin{equation}\label{ODE_initial}
w_n(0) = \sum_{k=1}^n (w_0,\re_k)_{L^2(\rO)} \re_k\,,
\end{equation}
where $q_n \equiv -\smallexp{$\displaystyle{}\frac{1}{\theta}$}\, \div w_n$. 
We note that (\ref{ODE}) is an ODE; thus the fundamental theorem of ODE implies that there exists $\rT_n>0$ such that $w_n$ exists in the time interval $[0,\rT_n)$.

\begin{remark}
A basis of $\rV$ can be obtained by the following eigenvalue problem
\begin{alignat*}{2}
u - \Delta u &= \lambda u \qquad&&\text{in}\quad\rO\,,\\
\frac{\p u}{\p \rN} &= \Delta_0 u \qquad&&\text{on}\quad\Gamma\,.
\end{alignat*}
The study of this eigenvalue problem relies on the solvability of the elliptic problem
\begin{alignat*}{2}
u - \Delta u &= f \qquad&&\text{in}\quad\rO\,,\\
\frac{\p u}{\p \rN} &= \Delta_0 u \qquad&&\text{on}\quad\Gamma\,,
\end{alignat*}
while the solvability of the equation above is trivial because of the Lax-Milgram theorem.
\end{remark}

The next step is to obtain $n$-independent estimates for the finite dimensional approximation $w_n$ in certain Banach spaces. Before doing so, we note that if assuming that $\|h_0\|_{H^{1.7}(\Gamma)} < \varsigma$, by (\ref{smallness_of_JA_m_1}), for all $w\in H^1(\rO)$ we find that the bilinear form $\rB_\bpsi$ satisfies
\begin{align*}
& \rB_\bpsi(w,w) 
= \int_\rO \big(w^s_{,k} w^s_{,k} + w^k_{,s} w^s_{,k}\big) dx - \int_\rO \big[\bJ^{-1} \bA^j_\ell \bpsi^i_{,s} \bA^k_\ell \bpsi^i_{,r} - \delta^s_r \delta^j_k \big] w^r_{,j} w^s_{,k} dx \nonumber\\
&\qquad + \int_\rO \big[\bpsi^i_{,s} \bA^k_\ell \bA^j_\ell (\bJ^{-1} \bpsi^i_{,r})_{,j} w^r + \bA^k_\ell (\bJ^{-1} \bpsi^\ell_{,r})_{,s} w^r \big] w^s_{,k} dx + \int_\rO (\bJ^{-1} \hspace{-1pt}-\hspace{-1pt} 1)w^k_{,s} w^s_{,k} dx\nonumber\\
&\quad \ge \frac{1}{2} \|\Def w\|^2_{L^2(\rO)} \hspace{-1pt}-\hspace{-1pt} C \varsigma \|w\|^2_{H^1(\rO)} \hspace{-1pt}-\hspace{-1pt} C \|\bh\|_{H^2(\Gamma)} \|w\|^{1/2}_{L^2(\rO)}\|w\|^{3/2}_{H^1(\rO)} \nonumber\\
&\quad \ge \frac{1}{2} \|\Def w\|^2_{L^2(\rO)} \hspace{-1pt}-\hspace{-1pt} \big(C \varsigma \hspace{-1pt}+\hspace{-1pt} \delta_1\big) \|w\|^2_{H^1(\rO)} \hspace{-1pt}-\hspace{-1pt} C_{\delta_1} \big(\|h_0\|^2_{H^2(\Gamma)} \hspace{-1pt}+\hspace{-1pt} \rM^2\big) \|w\|^2_{L^2(\rO)}\,. \nonumber
\end{align*}
Now let $\varphi = w_n \in \text{span}(\re_1,\cdots,\re_n)$ in (\ref{ODE}), by the inequality above we have
\begin{align*}
& \frac{1}{2} \frac{d}{dt} \|w_n(t)\|^2_{L^2(\rO)} \hspace{-2pt}+\hspace{-1pt} \frac{1}{2} \|\Def w_n\|^2_{L^2(\rO)} \hspace{-2pt}+\hspace{-1pt} \theta \|q_n\|^2_{L^2(\rO)} \hspace{-2pt}+\hspace{-1pt} \ve^2 \|w_n^\pprime\|^2_{L^2(\Gamma)} \\
&\ \le\hspace{-1pt} \big( C \varsigma \hspace{-1pt}+\hspace{-1pt} \delta_1\big) \|w_n\|^2_{H^1(\rO)} \hspace{-2pt}+\hspace{-1pt} C_{\delta_1} \big(\|h_0\|^2_{H^2(\Gamma)} \hspace{-2pt}+\hspace{-1pt} \rM^2\big) \|w_n\|^2_{L^2(\rO)} \hspace{-2pt}+\hspace{-1pt} C \big[\|\widebar{\rF}\|^2_{L^2(\rO)} \hspace{-2pt}+\hspace{-1pt} \|\widebar{\rG}\|^2_{L^2(\Gamma)}\big].
\end{align*}
Integrating the inequality above in time over the time interval $(0,t)$, by Korn's inequality we further obtain that
\begin{align*}
& \|w_n(t)\|^2_{L^2(\rO)} + \int_0^t \Big[\|w_n\|^2_{H^1(\rO)} + \frac{1}{\theta} \|\div w_n\|^2_{L^2(\rO)} + \ve^2 \|w_n^\pprime\|^2_{L^2(\Gamma)}\Big] d\tilde{t} \\
&\quad \le C \|w_0\|^2_{L^2(\rO)} + \big(C \varsigma + 2 \delta_1\big) \int_0^t \|w_n\|^2_{H^1(\rO)} d\tilde{t} \\
&\qquad + C_{\delta_1} \big(\|h_0\|^2_{H^2(\Gamma)} \hspace{-2pt}+ \rM^2\big) \int_0^t \|w_n\|^2_{L^2(\rO)} d\tilde{t} + C \int_0^t \Big[\|\widebar{\rF}\|^2_{L^2(\rO)} + \|\widebar{\rG}\|^2_{L^2(\Gamma)} \Big] d\tilde{t}\,.
\end{align*}
Since
\begin{align*}
\|\widebar{\rF}\|_{L^2(\rO)} &\le C \varsigma \|\bw_t\|_{L^2(\rO)} + C \|D^2 \bpsi\|_{L^4(\rO)} \|D\bw\|_{L^4(\rO)} + C \|D^2 \bpsi\|^2_{L^4(\rO)} \|\bw\|_{L^\infty(\rO)} \\
&\quad + C \big(\|\bw\|_{L^4(\rO)} + \|\bpsi_t\|_{L^4(\rO)}\big) \big(\|D^2 \bpsi\|_{L^4(\rO)} \|\bw\|_{L^\infty(\rO)} + \|\bw\|_{W^{1,4}(\rO)}\big) \\
&\quad + C \|\nabla \bpsi_t\|_{L^4(\rO)} \|\bw\|_{L^4(\rO)} \,,\\
\|\widebar{\rG}\|_{L^2(\Gamma)} &\le C \Big[\|\bh\|_{H^2(\Gamma)} + \|\bh'\|^2_{L^4(\Gamma)}\Big] \le C \Big[\|\bh\|_{H^2(\Gamma)} + 1\Big] \,,
\end{align*}
by (\ref{sup_in_time_ineq}) and Young's inequality we obtain that
\begin{align*}
& \|\widebar{\rF}\|^2_{L^2(\rO)} + \|\widebar{\rG}\|^2_{L^2(\Gamma)} \le C \varsigma^2 \|\bw_t\|^2_{L^2(\rO)} + C \|\bh_t\|^2_{H^1(\Gamma)} \\
&\qquad\quad + C \rM^2 \big(\|w_0\|^2_{H^1(\rO)} + \|h_0\|^2_{H^2(\Gamma)} + \rM^2\big)^6 + \frac{1}{\rM^2} \|\bw\|^2_{H^2(\rO)}\,.
\end{align*}
Therefore, by choosing $\varsigma>0$, $\delta_1>0$ small enough, and $0 < \rT^*_\rM < \rT_\rM$ small enough so that $C \rM^2 \big(\|w_0\|^2_{H^1(\rO)} + \|h_0\|^2_{H^2(\Gamma)} + \rM^2\big)^6 \rT^*_\rM \le 1$,
\begin{align*}
& \|w_n(t)\|^2_{L^2(\rO)} \hspace{-1pt}+\hspace{-1pt} \int_0^t \Big[\|w_n\|^2_{H^1(\rO)} \hspace{-1pt}+\hspace{-1pt} \theta \|q_n\|^2_{L^2(\rO)} \hspace{-1pt}+\hspace{-1pt} \ve^2 \|w_n^\pprime\|^2_{L^2(\Gamma)}\Big] d\tilde{t} \\
&\quad \le C \Big[1 \hspace{-1pt}+\hspace{-1pt} \|w_0\|^2_{L^2(\rO)}\Big] \hspace{-1pt}+\hspace{-1pt} C \big(\|h_0\|^2_{H^2(\Gamma)} \hspace{-2pt}+ \rM^2\big) \int_0^t \hspace{-1pt} \|w_n\|^2_{L^2(\rO)} d\tilde{t} \hspace{-1pt}+\hspace{-1pt} C \varsigma^2 \int_0^t \hspace{-1pt}\|\bw_t\|^2_{L^2(\rO)} d\tilde{t} 
\end{align*}
for all $t\in [0,\rT^*_\rM]$. The Gronwall inequality further suggests that
\begin{equation}\label{wn_L2H1_est}
\begin{array}{l}
\displaystyle{} \|w_n(t)\|^2_{L^2(\rO)} + \int_0^t \Big[\|w_n\|^2_{H^1(\rO)} + \theta \|q_n\|^2_{L^2(\rO)} + \ve^2 \|w_n^\pprime\|^2_{L^2(\Gamma)}\Big] d\tilde{t} \vspace{.1cm} \\
\displaystyle{} \qquad\quad \le C \Big[1 + \|w_0\|^2_{L^2(\rO)} \Big] + C \varsigma^2 \int_0^t \|\bw_t\|^2_{L^2(\rO)} d\tilde{t}
\end{array}
\end{equation}
if $\rT^*_\rM$ is chosen even smaller.

We also need an estimate of $w_{nt}$ in order to pass $n$ to the limit. Since $w_{nt}$ belongs to the span of $\re_1,\cdots,\re_n$, it can be used as a test function in (\ref{ODE}). By doing so we obtain that
\begin{equation}\label{wt_eq}
\begin{array}{l}
\displaystyle{} \|w_{nt}\|^2_{L^2(\rO)} + \int_\rO \Big[\bpsi^i_{,s} \bA^k_\ell \bA^j_\ell (\bJ^{-1} \bpsi^i_{,r} w^r_{n})_{,j} + \bA^k_\ell (\bJ^{-1} \bpsi^i_{,r} w^r_{n})_{,s} \Big] w^s_{nt,k} dx \vspace{.2cm}\\
\displaystyle{}\qquad + \frac{1}{2} \frac{d}{dt} \Big[\theta \|q_n\|^2_{L^2(\rO)} \hspace{-1pt}+ \ve^2 \|w_n^\pprime\|^2_{L^2(\Gamma)}\Big] = (\widebar{\rF},w_{nt})_{L^2(\rO)} \hspace{-1pt}+ \int_\Gamma \hspace{-1pt}\widebar{\rG} \cdot w_{nt} dS\,.
\end{array}
\end{equation}
Since
\begin{align*}
& \int_\rO \Big[\bpsi^i_{,s} \bA^k_\ell \bA^j_\ell (\bJ^{-1} \bpsi^i_{,r} w^r_n)_{,j} + \bA^k_\ell (\bJ^{-1} \bpsi^i_{,r} w^r_{n})_{,s} \Big] w^s_{nt,k} dx \\
&\qquad = \int_\rO \bJ \Big[\bA^j_\ell (\bJ^{-1} \bpsi^i_{,r} w^r_n)_{,j} + \bA^j_i (\bJ^{-1} \bpsi^\ell_{,r} w^r_n)_{,j} \Big] \bA^k_\ell (\bJ^{-1} \bpsi^i_{,s} w^s_{nt,k})\, dx \\
&\qquad = \int_\rO \bJ \Big[\bA^j_\ell (\bJ^{-1} \bpsi^i_{,r} w^r_n)_{,j} + \bA^j_i (\bJ^{-1} \bpsi^\ell_{,r} w^r_n)_{,j} \Big] \times \Big[\bA^k_\ell (\bJ^{-1} \bpsi^i_{,s} w^s_n)_{t,k} \\
&\qquad\qquad\quad - \bA^k_\ell (\bJ^{-1} \bpsi^i_{,s})_t w^s_{n,k} - \bA^k_\ell (\bJ^{-1} \bpsi^i_{,s})_{,k} w^s_{nt} - \bA^k_\ell (\bJ^{-1} \bpsi^i_{,s})_{t,k} w^s_n \Big] dx \,,
\end{align*}
by the symmetry of $\bA^j_\ell (\bJ^{-1} \bpsi^i_{,r} w^r_n)_{,j} + \bA^j_i (\bJ^{-1} \bpsi^\ell_{,r} w^r_n)_{,j}$ in $(i,\ell)$ we find that
\begin{equation}\label{extra_regularity1}
\begin{array}{ll}
&\displaystyle{} \int_\rO \bJ \Big[\bA^j_\ell (\bJ^{-1} \bpsi^i_{,r} w^r_n)_{,j} + \bA^j_i (\bJ^{-1} \bpsi^\ell_{,r} w^r_n)_{,j} \Big] \bA^k_\ell (\bJ^{-1} \bpsi^i_{,s} w^s_n)_{t,k} dx \vspace{.1cm}\\
&\displaystyle{} \qquad \ge \frac{1}{2} \frac{d}{dt} \int_\rO \bJ \Big[\bA^j_\ell (\bJ^{-1} \bpsi^i_{,r} w^r_n)_{,j} + \bA^j_i (\bJ^{-1} \bpsi^\ell_{,r} w^r_n)_{,j} \Big] \bA^k_\ell (\bJ^{-1} \bpsi^i_{,s} w^s_n)_{,k} dx \vspace{.2cm}\\
&\displaystyle{} \qquad\quad - C \Big[\|\nabla^2 \bpsi\|_{L^\infty(\rO)} \|w_{nt}\|_{L^2(\rO)} + \|\nabla \bpsi_t\|_{L^\infty(\rO)} \|\nabla w_n\|_{L^2(\rO)} \vspace{.2cm}\\
&\displaystyle{} \hspace{50pt} + \|\nabla^2 \bpsi_t\|_{L^4(\rO)} \|w_n\|_{L^4(\rO)} \Big] \big\|\nabla [\bJ^{-1} (\nabla \bpsi)^\rT w_n]\big\|_{L^2(\rO)} \,;
\end{array}
\end{equation}
thus
\begin{align*}
& \int_\rO \Big[\bpsi^i_{,s} \bA^k_\ell \bA^j_\ell (\bJ^{-1} \bpsi^i_{,r} w^r_n)_{,j} + \bA^k_\ell (\bJ^{-1} \bpsi^i_{,r} w^r_{n})_{,s} \Big] w^s_{nt,k} dx \\
&\qquad \ge \frac{1}{2} \frac{d}{dt} \int_\rO \bJ \Big[\bA^j_\ell (\bJ^{-1} \bpsi^i_{,r} w^r_n)_{,j} + \bA^j_i (\bJ^{-1} \bpsi^\ell_{,r} w^r_n)_{,j} \Big] \bA^k_\ell (\bJ^{-1} \bpsi^i_{,s} w^s_n)_{,k} dx \\
&\qquad\quad - C_\delta \Big[\|\bh_\ve\|^2_{H^{2.7}(\Gamma)} + \|\bh_t\|_{H^2(\Gamma)} \Big] \|w_n\|^2_{H^1(\rO)} - \delta \|w_{nt}\|^2_{L^2(\rO)} \,;
\end{align*}
Moreover, by the embedding $H^1(\rO) \contsubset H^{0.5}(\Gamma)$,
\begin{align*}
&\int_0^t \int_\Gamma \widebar{\rG} \cdot w_{nt} dS d\tilde{t} = \int_\Gamma \widebar{\rG} \cdot w_n dS \Big|_{\,\tilde{t}=0}^{\,\tilde{t} = t} - \int_0^t \int_\Gamma \widebar{\rG}_t \cdot w_n dS d\tilde{t} \\
&\qquad\quad\le C_{\delta_1} 
\|\widebar{\rG}(t)\|^2_{H^{-0.5}(\Gamma)} + \delta_1 \|w_n(t)\|^2_{H^1(\rO)} + C \|w_n(0)\|^2_{H^1(\rO)} \\
&\qquad\qquad + \frac{1}{\rM^2} \int_0^t \|\widebar{\rG}_t\|^2_{H^{-0.5}(\Gamma)} d\tilde{t} + C \rM^2 \int_0^t \|w_n\|^2_{H^1(\rO)} d\tilde{t}\,.
\end{align*}
As a consequence, integrating (\ref{wt_eq}) in time over the time interval $(0,t)$, by (\ref{smallness_of_JA_m_1}) and
choosing $\delta>0$ small enough we conclude that
\begin{align}
& \|\Def w_n(t)\|^2_{L^2(\rO)} \hspace{-1pt}+\hspace{-1pt} \theta \|q_n(t)\|^2_{L^2(\rO)} \hspace{-1pt}+\hspace{-1pt} \ve^2 \|w_n^\pprime(t)\|^2_{L^2(\Gamma)} \hspace{-1pt}+\hspace{-1pt} \int_0^t \|w_{nt}\|^2_{L^2(\rO)} d\tilde{t} \label{wn_t_L2L2_est}\\
&\quad \le C \varsigma \|\nabla w_n\|^2_{L^2(\rO)} \hspace{-1pt}+\hspace{-1pt} C \|D^2 \bpsi w_n\|^2_{L^2(\rO)} \hspace{-1pt}+\hspace{-1pt} C \|w_n(0)\|^2_{H^1(\rO)} \nonumber\\
&\qquad \hspace{-1pt}+\hspace{-1pt} C_{\delta_1} \|\widebar{\rG}(t)\|^2_{H^{-0.5}(\Gamma)} \hspace{-1pt}+\hspace{-1pt} C_{\delta_2} \int_0^t \hspace{-1pt}\|\widebar{\rF}\|^2_{L^2(\rO)} d\tilde{t} \hspace{-1pt}+\hspace{-1pt} \delta_1 \|w_n(t)\|^2_{H^1(\rO)} \hspace{-1pt}+\hspace{-1pt} \delta_2 \int_0^t \hspace{-1pt} \|w_{nt}\|^2_{L^2(\rO)} d\tilde{t} \nonumber\\
&\qquad \hspace{-1pt}+\hspace{-1pt} \frac{1}{\rM^2} \int_0^t \|\widebar{\rG}_t\|^2_{H^{-0.5}(\Gamma)} d\tilde{t} \hspace{-1pt}+\hspace{-1pt} C \int_0^t \Big[\|\bh_\ve\|^2_{H^{2.7}(\Gamma)} \hspace{-1pt}+\hspace{-1pt} \|\bh_t\|_{H^2(\Gamma)} \hspace{-1pt}+\hspace{-1pt} \rM^2 \Big]\|w_n\|^2_{H^1(\rO)} d\tilde{t} \,. \nonumber
\end{align}
Since
\begin{align*}
\|\widebar{\rG}\|_{H^{-0.5}(\Gamma)} &\le C (\|b_0\|_{H^1(\Gamma)}) \|\bh\|_{H^{1.5}(\Gamma)} \le C \Big[ \|h_0\|_{H^{1.5}(\Gamma)} + \sqrt{t} \|\bh_t\|_{L^2(0,t;H^{1.5}(\Gamma))}\Big] \\
\|\widebar{\rG}_t\|_{H^{-0.5}(\Gamma)} &\le C \Big[\|\bh_t\|_{H^{1.5}(\Gamma)} + \|\bh_t' \bh''\|_{H^{-0.5}(\Gamma)} \Big] \le C \|\bh_t\|_{H^{1.5}(\Gamma)}\,,
\end{align*}
choosing $\varsigma \ll 1$, $\delta_1, \delta_2 > 0$ small enough, by Korn's inequality the combination of (\ref{wn_L2H1_est}) and (\ref{wn_t_L2L2_est}) suggests that if $t \in [0,\rT^*_\rM]$,
\begin{align}
& \|w_n(t)\|^2_{H^1(\rO)} + \theta \|q_n(t)\|^2_{L^2(\rO)} + \ve^2 \|w_n(t)\|^2_{H^1(\Gamma)} + \int_0^t \|w_{nt}\|^2_{L^2(\rO)} d\tilde{t} \nonumber\\
&\qquad \le C \Big[1 + \|w_0\|^2_{H^1(\rO)} + \|h_0\|^2_{H^{1.5}(\Gamma)} \Big] + C \varsigma^2 \rM^2 \label{wnt_estimate_temp1}\\
&\qquad\quad + C \int_0^t \Big[\|\bh_\ve\|^2_{H^{2.7}(\Gamma)} \hspace{-1pt}+\hspace{-1pt} \|\bh_t\|_{H^2(\Gamma)} \hspace{-1pt}+\hspace{-1pt} \rM^2 \Big]\|w_n\|^2_{H^1(\rO)} d\tilde{t} \nonumber
\end{align}
for some constant $C$ independent of the initial data. Since
$$
\|\bh_\ve\|^2_{H^{2.7}(\Gamma)} \le C \ve^{-3/2} \|\bh\|_{H^2(\Gamma)}\,,
$$
the Gronwall inequality further implies that we may choose $\rT_\ve < \rT^*_\rM$ such that for all $t\in [0,\rT_\ve]$,
\begin{equation}\label{wn_LinfH1_est}
\begin{array}{l}
\displaystyle{} \|w_n(t)\|^2_{H^1(\rO)} + \theta \|q_n(t)\|^2_{L^2(\rO)} + \ve^2 \|w_n^\pprime(t)\|^2_{L^2(\Gamma)} + \int_0^t \|w_{nt}(\tilde{t})\|^2_{L^2(\rO)} d\tilde{t} \vspace{.15cm}\\
\displaystyle{} \qquad\qquad\quad \le C \Big[1 + \|w_0\|^2_{H^1(\rO)} + \|h_0\|^2_{H^{1.5}(\Gamma)} \Big] + C \varsigma^2 \rM^2 \,.
\end{array}
\end{equation}

Estimate (\ref{wn_LinfH1_est}) provides an $n$-independent upper bound for $w_{nt} \in L^2(0,\rT_\ve; L^2(\rO))$ and $w_n \in L^\infty(0,\rT_\ve;\rV)$. Therefore, there exists a subsequence $n_k$ of $n$ such that
\begin{alignat*}{2}
w_{n_k} &\rightharpoonup w_\theta &&\text{in}\quad L^p(0,\rT_\ve;\rV)\qquad\quad\Forall p\in (1,\infty)\,, \\
w_{n_k t} &\rightharpoonup w_{\theta t} \qquad&&\text{in}\quad L^2(0,\rT_\ve;L^2(\rO))\,.
\end{alignat*}
The weak limit $w_\theta$ satisfies
\begin{equation}\label{wtheta_LinfH1_est_temp}
\begin{array}{l}
\displaystyle{} \Big[\int_0^t \hspace{-2pt}\Big(\|w_\theta(\tilde{t})\|^{2p}_{H^1(\rO)} + \|\sqrt{\theta} q_\theta(\tilde{t})\big\|^{2p}_{L^2(\rO)} + \|\ve w_\theta^\pprime(\tilde{t})\|^{2p}_{L^2(\Gamma)}\Big) d\tilde{t}\Big]^{1/p} \vspace{.15cm}\\
\displaystyle{} \qquad + \int_0^t \|w_{\theta t}(\tilde{t})\|^2_{L^2(\rO)} d\tilde{t} \le C \Big[\|w_0\|^2_{H^1(\rO)} + \|h_0\|^2_{H^{1.5}(\Gamma)} + 1 \Big] + C \varsigma^2 \rM^2
\end{array}
\end{equation}
and the variational form
\begin{equation}\label{weak_penalty1}
\begin{array}{l}
\displaystyle{} \int_0^{\rT_\ve} \Big[(w_{\theta t},\varphi)_{L^2(\rO)} + \rB_\bpsi(w_\theta,\varphi) - (q_\theta,\div \varphi) + \ve^2 (w_\theta^\pprime,\varphi^\pprime)_{L^2(\Gamma)}\Big] dt \vspace{.1cm}\\
\displaystyle{}\hspace{45pt} = \int_0^{\rT_\ve} \Big[(\widebar{\rF},\varphi)_{L^2(\rO)} + (\widebar{\rG},\varphi)_{L^2(\Gamma)}\Big] dt \qquad \Forall \varphi\in L^2(0,\rT_\ve;\rV)\,.
\end{array}
\end{equation}
Since the right-hand side of (\ref{wtheta_LinfH1_est_temp}) is independent of the exponent $p$, we let $p\to \infty$ and obtain that
\begin{equation}\label{wtheta_LinfH1_est}
\begin{array}{l}
\displaystyle{} \sup_{t\in[0,\rT_\ve]} \Big[\|w_\theta(t)\|^2_{H^1(\rO)} + \theta \|q_\theta(t)\|^2_{L^2(\rO)} + \ve^2 \|w_\theta^\pprime(t)\|^2_{L^2(\Gamma)}\Big] \vspace{.15cm}\\
\displaystyle{} \quad\ + \int_0^{\rT_\ve} \hspace{-2pt} \|w_{\theta t}(t)\|^2_{L^2(\rO)} dt \le C \Big[\|w_0\|^2_{H^1(\rO)} + \|h_0\|^2_{H^{1.5}(\Gamma)} + 1 \Big] + C \varsigma^2 \rM^2 \,.
\end{array}
\end{equation}
The same argument suggests that there exists $\theta_k \to 0$ such that
\begin{alignat*}{2}
w_{\theta_k} &\rightharpoonup w &&\text{in}\quad L^p(0,\rT_\ve;\rV)\qquad\quad\Forall p\in (1,\infty)\,, \\
w_{\theta_k t} &\rightharpoonup w_t \qquad&&\text{in}\quad L^2(0,\rT_\ve;L^2(\rO))\,,
\end{alignat*}
and the weak limit $w$ satisfies
\begin{equation}\label{w_LinfH1_est}
\begin{array}{l}
\displaystyle{} \sup_{t\in [0,\rT_\ve]} \Big[\|w(t)\|^2_{H^1(\rO)} + \ve^2 \|w^\pprime(t)\|^2_{L^2(\Gamma)}\Big] + \int_0^{\rT_\ve} \|w_t(t)\|^2_{L^2(\rO)} dt \vspace{.15cm}\\
\displaystyle{} \qquad\qquad \le C_1 \Big[\|w_0\|^2_{H^1(\rO)} + \|h_0\|^2_{H^{1.5}(\Gamma)} + 1 \Big] + C_2 \varsigma^2 \rM^2 \,,
\end{array}
\end{equation}
and $\div w = 0$ since $\sqrt{\theta} q_\theta = \smallexp{$\displaystyle{}\frac{1}{\sqrt{\theta}}$}\, \div w_\theta$ is uniformly bounded in $L^\infty(0,\rT_\ve;L^2(\rO))$. Moreover, (\ref{weak_penalty1}) implies that for all $a,b\in (0,\rT_\ve)$,
\begin{align*}
& \int_a^b \Big[(w_t,\varphi)_{L^2(\rO)} + \rB_\bpsi(w,\varphi) + \ve^2 (w^\pprime,\varphi^\pprime)_{L^2(\Gamma)}\Big] dt \\
&\hspace{35pt} = \int_a^b \Big[(\widebar{\rF},\varphi)_{L^2(\rO)} + (\widebar{\rG},\varphi)_{L^2(\Gamma)}\Big] dt \qquad\Forall \varphi\in \rV\cap H^1_\div(\rO)\,,
\end{align*}
and Lebesgue's differentiation theorem further suggests that $w$ satisfy (\ref{weak_reg}).

To finish the process of construction a weak solution to (\ref{NSregL}), it remains to show that $w(0) = w_0$. Let $\zeta:[0,\rT_\ve] \to \bbR$ be a non-negative smooth function (of $t$) such that $\zeta(0) = 1$ and $\zeta({\rT_\ve}) = 0$, and $\varphi \in \text{span}(\re_1,\cdots,\re_n)$. The use of $\zeta \varphi$ as a test function in (\ref{ODE}) and then integrating in time over the time interval $(0,\rT_\ve)$, integrating by parts in time we obtain that
\begin{align*}
& (w_n(0), \varphi)_{L^2(\rO)} - \int_0^{\rT_\ve} (w_n, \zeta^\pprime \varphi)_{L^2(\rO)} dt + \int_0^{\rT_\ve} \hspace{-2pt}\Big[\rB_\bpsi(w_n, \zeta \varphi) - (q_n, \zeta \div\varphi)_{L^2(\rO)} \\
&\qquad\qquad + \ve^2 (w^\pprime_n, \zeta \varphi^\pprime)_{L^2(\Gamma)} \Big] dt = \int_0^{\rT_\ve} \hspace{-2pt}\Big[(\widebar{\rF},\zeta \varphi)_{L^2(\rO)} + (\widebar{\rG},\zeta \varphi)_{L^2(\Gamma)}\Big] dt\,.
\end{align*}
By (\ref{ODE_initial}), passing $n\to \infty$ we find that
\begin{align*}
& (w_0, \varphi)_{L^2(\rO)} - \int_0^{\rT_\ve} \hspace{-2pt} (w_\theta, \zeta^\pprime \varphi)_{L^2(\rO)} dt + \int_0^{\rT_\ve} \hspace{-2pt}\Big[\rB_\bpsi(w_\theta, \zeta \varphi) - (q_\theta, \zeta \div\varphi)_{L^2(\rO)} \\
&\qquad + \ve^2 (w^\pprime_\theta, \zeta \varphi^\pprime)_{L^2(\Gamma)} \Big] dt = \int_0^{\rT_\ve} \hspace{-2pt}\Big[(\widebar{\rF},\zeta \varphi)_{L^2(\rO)} + (\widebar{\rG},\zeta \varphi)_{L^2(\Gamma)}\Big] dt \quad\ \Forall \varphi \in H^1(\rO)\,.
\end{align*}
On the other hand, the use of $\zeta \varphi$ as a test function in (\ref{weak_penalty1}) suggests that
\begin{align*}
& (w_\theta(0), \varphi)_{L^2(\rO)} - \int_0^{\rT_\ve} \hspace{-2pt}(w_\theta, \zeta^\pprime \varphi)_{L^2(\rO)} dt + \int_0^{\rT_\ve} \hspace{-2pt}\Big[\rB_\bpsi(w_\theta, \zeta \varphi) - (q_\theta, \zeta \div\varphi)_{L^2(\rO)} \\
&\quad + \ve^2 (w^\pprime_\theta, \zeta \varphi^\pprime)_{L^2(\Gamma)} \Big] dt = \int_0^{\rT_\ve} \hspace{-2pt}\Big[(\widebar{\rF},\zeta \varphi)_{L^2(\rO)} + (\widebar{\rG},\zeta \varphi)_{L^2(\Gamma)}\Big] dt \quad\ \Forall \varphi \in H^1(\rO)\,.
\end{align*}
The comparison between the two identities above enable us to conclude the identity $w_\theta(0) = w_0$. Similar argument can be used to conclude that $w(0) = w_0$, and is left to the readers. The uniqueness of the weak solution should also be clear to the readers.

Finally, let $T:\rV \to \bbR$ be given by
$$
T(\varphi) = (w_t,\varphi)_{L^2(\rO)} + \rB_\bpsi(w,\varphi) + \ve^2 (w^\pprime,\varphi^\pprime)_{L^2(\Gamma)} - (\widebar{\rF},\varphi)_{L^2(\rO)} - (\widebar{\rG},\varphi)_{L^2(\Gamma)}\,,
$$
where $w$ is the weak solution to (\ref{NSregL}). Since $T:\rV \cap H^1_\div(\rO) \to \{0\}$, the Lagrange multiplier lemma 
implies that there exists a unique $q \in L^2(\rO)$ such that
$$
T(\varphi) = (q,\div \varphi)_{L^2(\rO)} \qquad\Forall \varphi\in \rV\,,
$$
and by (\ref{smallness_of_JA_m_1}) we also conclude that $q$ satisfies
\begin{equation}\label{q_est}
\|q\|_{L^2(\rO)} \le C \Big[\|w_t\|_{L^2(\rO)} \hspace{-1pt}+\hspace{-1pt} \|\nabla w\|_{L^2(\rO)} \hspace{-1pt}+\hspace{-1pt} \ve^2 \|w\|_{H^1(\Gamma)} \hspace{-1pt}+\hspace{-1pt} \|\widebar{\rF}\|_{L^2(\rO)} \hspace{-1pt}+\hspace{-1pt} \|\widebar{\rG}\|_{H^{-0.5}(\Gamma)}\Big]\,.
\end{equation}

\begin{remark}\label{rmk:issues}
Now we explain briefly why the usual ALE formulation {\rm(\ref{NSALE})} is not a good choice in obtaining a solution $(v,q) \in \V(\rT) \times \Q(\rT)$. Similar to {\rm(\ref{NSreg})} and {\rm(\ref{NSregL})}, we introduce the linear penalized problem of {\rm(\ref{NSALE})} as the following equation
\begin{subequations}\label{NSALEpenalty}
\begin{alignat}{2}
v^i_{\theta t} + \bA^k_\ell (\bA^j_\ell v^i_{\theta,j} + \bA^j_i v^\ell_{\theta,j}\big)_{,k} + \bA^k_i q_{\theta,k} &= \widebar{\rF}_1 \qquad&&\text{in}\quad\rO\times (0,T)\,,\\
(\bA^j_\ell v^i_{\theta,j} + \bA^j_i v^\ell_{\theta,j} - q_\theta \delta^\ell_i) \bA^k_\ell \rN_k &= \ve^2 \Delta_0 v_\theta + \widebar{\rG}_1 &&\text{on}\quad \Gamma\times (0,T)\,,\\
v_\theta &= v_0 \equiv u_0\circ \psi_0 \qquad&&\text{on}\quad \rO\times\{t=0\},
\end{alignat}
\end{subequations}
for some functions $\widebar{\rF}_1$ and $\widebar{\rG}_1$, where $\bA$ could be obtained from the ALE map $\bpsi$ or Lagrangian coordinate $\widebar{\eta}$ and $q_\theta = - \smallexp{$\displaystyle{}\frac{1}{\theta}$}\, \bA^j_i v^i_{\theta,j}$.
The same as before, let $v_n(x,t) = \sum\limits_{k=1}^n d_k(t)\, \re_k(x)$ be the finite dimensional approximation of the solution to {\rm(\ref{NSALEpenalty})} satisfying the ODE
\begin{equation}\label{weak_soln_ALE_penalty}
\begin{array}{l}
\displaystyle{} (v_{nt},\varphi)_{L^2(\rO)} \hspace{-2pt}+\hspace{-1pt} \frac{1}{2} \big((\bA^j_\ell v^i_{n,j} \hspace{-2pt}+\hspace{-1pt} \bA^j_i v^\ell_{n,j}), (\bA^k_\ell \varphi^i_{,k} \hspace{-2pt}+\hspace{-1pt} \bA^k_i \varphi^\ell_{,j})\big)_{L^2(\rO)} \hspace{-2pt}+\hspace{-1pt} \ve^2 (v_n^\pprime, \varphi^\pprime)_{L^2(\Gamma)} \vspace{.2cm}\\
\displaystyle{}\ \ - (q_n, \bA^j_i \varphi^i_{,j})_{L^2(\rO)} = (\widebar{\rF}_1,\varphi)_{L^2(\rO)} \hspace{-1pt}+\hspace{-1pt} (\widebar{\rG}_1,\varphi)_{L^2(\Gamma)} \quad \Forall \varphi \in \text{span}(\re_1,\cdots,\re_n)
\end{array}
\end{equation}
and the initial condition
$$
v_n(x,0) = \sum_{k=1}^n (v_0,\re_k)_{L^2(\rO)} \, \re_k(x) \,.
$$
Then for all fixed $\theta >0$,
\begin{alignat*}{2}
v_{n_j} &\to v_\theta &&\text{in}\quad L^2(0,\rT;H^1(\rO))\,,\\
v_{n_j t} &\to v_{\theta t} \qquad&&\text{in}\quad L^2(0,\rT;H^1(\rO)')\,.
\end{alignat*}
However, the difficulties here is due to a $\theta$-independent estimate of $v_{\theta t}$ {\rm(}which is required to pass $\theta \to 0${\rm)}. We remind the readers that the way to obtain an estimate of $v_{\theta t}$ is to use $v_{n t}$ as a test function in {\rm(\ref{weak_soln_ALE_penalty})}, while in this case, for the last term on the left-hand side of {\rm(\ref{weak_soln_ALE_penalty})} we have
\begin{equation}
- \big(q_n, \bA^j_\ell v^\ell_{n t,j}\big)_{L^2(\rO)} = \frac{\theta}{2} \frac{d}{dt} \|q_n\|^2_{L^2(\rO)} + \big(q_n, \bA^j_{\ell t} v^\ell_{n,j}\big)_{L^2(\rO)}\,, \label{penalty_pressure_estimate}
\end{equation}
where $q_n = -\smallexp{$\displaystyle{}\frac{1}{\theta}$} \bA^j_i v^i_{n,j}$.
The appearance of the second term suggests that we are not able to obtain $\theta$-independent estimate of $v_{n t}$ and the penalty method is not applicable.

On the other hand, with the satisfaction of the first order compatibility condition {\rm(\ref{compatibility_condition})}, we use the Lagrangian formulation and look for a solution $(v,q)$ in the space $L^2(0,\rT;H^3(\rO))\times L^2(0,\rT;H^2(\rO))$ with $v_t \in L^2(0,\rT;H^1(\rO))$. In this case, we first use the Galerkin method to obtain a solution to the following integral equality {\rm(}which is the time derivative of the weak formulation {\rm(\ref{weak_soln_Lag_penalty1})} of the solution to {\rm(\ref{weak_soln_ALE_penalty}))}
\begin{equation}\label{weak_soln_Lag_penalty}
\begin{array}{l}
\displaystyle{} \langle w_{\theta t}, \varphi\rangle + \big(\big[\bA^k_\ell (\bA^j_\ell v^i_{\theta,j} + \bA^j_i v^\ell_{\theta,j})\big]_t,\varphi^i_{,k}\big)_{L^2(\rO)} + \ve^2 (w^\pprime_\theta, \varphi^\pprime)_{L^2(\Gamma)} \vspace{.2cm}\\
\displaystyle{} \qquad - \big((\bA^j_i q_{\theta})_t, \varphi^i_{,j}\big)_{L^2(\rO)} = (\widebar{\rF}_{1t},\varphi)_{L^2(\rO)} + (\widebar{\rG}_{1t},\varphi)_{L^2(\Gamma)} \quad\ \ \Forall \varphi\in H^1(\rO)
\end{array}
\end{equation}
with $w_\theta(0) = \widebar{\rF}_1(0) - \nabla q_0 + \Delta u_0$, where $v_\theta = u_0 + \smallexp{$\displaystyle{}\int_0^t$} w_\theta ds$, and
$$
q_\theta = q_0 - \frac{1}{\theta} \bA^j_i v^i_{\theta,j}\,.
$$
Integrating {\rm(\ref{weak_soln_Lag_penalty})} in time we find that $v_\theta$ satisfies
\begin{equation}\label{weak_soln_Lag_penalty1}
\begin{array}{l}
\displaystyle{} \langle v_{\theta t}, \varphi\rangle + \big(\bA^k_\ell (\bA^j_\ell v^i_{\theta,j} + \bA^j_i v^\ell_{\theta,j}),\varphi^i_{,k}\big)_{L^2(\rO)} + \ve^2 (v^\pprime_\theta, \varphi^\pprime)_{L^2(\Gamma)} \vspace{.2cm}\\
\displaystyle{} \qquad - \big(q_{\theta}, \bA^j_i \varphi^i_{,j}\big)_{L^2(\rO)} = (\widebar{\rF}_1,\varphi)_{L^2(\rO)} + (\widebar{\rG}_1,\varphi)_{L^2(\Gamma)} \quad\ \ \Forall \varphi\in \rV\,.
\end{array}
\end{equation}
We remark that to obtain {\rm(\ref{weak_soln_Lag_penalty1})} by integrating {\rm(\ref{weak_soln_Lag_penalty})} in time, the first order compatibility condition {\rm(\ref{compatibility_condition})} is required.

Since the solution $w_\theta = v_{\theta t}$ to {\rm(\ref{weak_soln_Lag_penalty})} belongs to $L^2(0,T;H^1(\rO))$, we may use it as a test function in {\rm(\ref{weak_soln_Lag_penalty1})}. In this case, 
for the last term on the left-hand side of {\rm(\ref{weak_soln_Lag_penalty1})} we have
$$
- \big(q_\theta, \bA^j_\ell v^\ell_{\theta t,j}\big)_{L^2(\rO)} = \frac{\theta}{2} \frac{d}{dt} \|q_\theta\|^2_{L^2(\rO)} + \big(q_\theta, \bA^j_{\ell t} v^\ell_{\theta,j}\big)_{L^2(\rO)}\,.
$$
Even though the equality above looks similar to the finite dimensional approximation {\rm(\ref{penalty_pressure_estimate})}, the situation now is different since $q_\theta$ is the Lagrange multiplier for the functional $T:\rV \to \bbR$ given by
\begin{align*}
T(\varphi) &= (v_{\theta t},\varphi)_{L^2(\rO)} + \frac{1}{2} \big((\bA^j_\ell v^i_{\theta,j} + \bA^j_i v^\ell_{\theta,j}) (\bA^k_\ell \varphi^i_{,k} + \bA^k_i \varphi^\ell_{,k})\big)_{L^2(\rO)} \\
&\quad + \ve^2 (v^\pprime_\theta \cdot \varphi^\pprime)_{L^2(\Gamma)} - (\widebar{\rF}_1,\varphi)_{L^2(\rO)} - (\widebar{\rG}_1,\varphi)_{L^2(\Gamma)}\,.
\end{align*}
Therefore, {\rm(}another version of\,{\rm)} the Lagrange multiplier lemma provides an $L^2$-estimate for $q_\theta$:
$$
\|q_\theta\|_{L^2(\rO)} \le C \Big[\|v_{\theta t}\|_{L^2(\rO)} + \|v_\theta\|_{H^1(\rO)} + \ve^2 \|v_\theta\|_{H^1(\Gamma)} + \|\widebar{\rF}_1\|_{L^2(\rO)} + \|\widebar{\rG}_1\|_{L^2(\Gamma)}\Big]\,,
$$
and the estimate above can be used to deduce an $\theta$-independent estimate for $v_{\theta t}$ in $L^2(0,T;L^2(\rO))$ {\rm(}using Young's inequality{\rm)}.
\end{remark}

\subsubsection{The existence of a solution to the regularized problem {\rm(\ref{NSreg})}}\label{sec:existence_to_NSreg}
In the previous sub-section we have already established the existence of a unique $(w,q) \in \V(\rT)\times \Q(\rT)$ satisfying
$$
\begin{array}{l}
\displaystyle{} \rB_\bpsi(w,\varphi) - (q,\div \varphi)_{L^2(\rO)} + \ve^2 (w^\prime,\varphi^\pprime)_{L^2(\Gamma)} \vspace{.2cm}\\
\displaystyle{} \qquad\qquad = (\widebar{\rF} \hspace{-1pt}-\hspace{-1pt} w_t,\varphi)_{L^2(\rO)} + (\widebar{\rG},\varphi)_{L^2(\Gamma)}
\end{array}
\qquad \Forall \varphi\in \rV,\ \ \text{a.e. $t\in [0,\rT]$}.
$$
In other words, $w(t)$ is the weak solution to the elliptic equation
\begin{alignat*}{2}
- (a^{jk}_{rs} w^r_{,j})_{,k} + q_{,s} &= f^s \qquad&&\text{in}\quad\rO\,,\\
\div w &= 0 &&\text{in}\quad\rO\,,\\
a^{jk}_{rs} w^r \rN_k - q \rN_s &= \ve^2 \Delta_0 w^s + g^s \qquad&&\text{on}\quad\Gamma\,.
\end{alignat*}
where
\begin{equation}
a^{ik}_{rs} = \bJ^{-1} \bA^j_\ell \bpsi^i_{,s} \bA^k_\ell \bpsi^i_{,r} + \bJ^{-1} \delta^k_r \delta^j_s \label{defn:ajk_rs}
\end{equation}
and
\begin{align*}
f^s &\equiv \widebar{\rF}^s - w^s_t + \big[\bpsi^i_{,s} \bA^k_\ell \bA^j_\ell (\bJ^{-1} \bpsi^i_{,r})_{,j} w^r + \bA^k_\ell (\bJ^{-1} \bpsi^\ell_{,r})_{,s} w^r \big]_{,k} \,,\\
g^s &\equiv \widebar{\rG}^s - \big[\bpsi^i_{,s} \bA^k_\ell \bA^j_\ell (\bJ^{-1} \bpsi^i_{,r})_{,j} w^r + \bA^k_\ell (\bJ^{-1} \bpsi^\ell_{,r})_{,s} w^r \big] \rN_k \,.
\end{align*}
Thanks to the convolution, $\bpsi, \bpsi_t \in L^\infty(0,T;H^k(\rO))$ for all $k\in \rN$. Therefore,
\begin{align*}
\|a\|_{W^{1,\infty}(\rO)} &\le C \Big[1 + \|\nabla^2 \bpsi\|_{L^\infty(\rO)} \Big] \le C \Big[1 + \|\nabla^2 \bpsi\|_{H^{1.5}(\rO)}\Big] \\
&\le C \Big[1 + \|\bh_{\ve\ve}\|_{H^{2.75}(\Gamma)}\Big] \le C \Big[1 + \ve^{-1} \|\bh\|_{H^{1.75}(\Gamma)}\Big]\,.
\end{align*}
Moreover, by (\ref{w_LinfH1_est}) we find that
\begin{align*}
\|g\|_{L^2(\Gamma)} &\le C \Big[1+\|\bh\|_{H^2(\Gamma)} + \|\nabla^2 \bpsi\|_{L^4(\Gamma)} \|w\|_{L^4(\Gamma)}\Big] \\
&\le C \Big[1 + \|\bh\|_{H^2(\Gamma)} + \ve^{-1} \|\bh\|_{H^{1.25}(\Gamma)} \|w\|_{H^1(\rO)}\Big] \\
&\le C \Big[1 + \|h_0\|_{H^2(\Gamma)} + \rM \sqrt{t} + \ve^{-1} \big(\|w_0\|_{H^1(\rO)} + \|h_0\|_{H^{1.5}(\Gamma)} + 1\big) \Big]\,,
\end{align*}
and by (\ref{sup_in_time_ineq}a) we obtain that
\begin{align*}
& \|f\|_{L^2(\rO)} \le C \Big[\big\|\id - \bJ^{-1} (\nabla \bpsi)^\rT (\nabla \bpsi)\big\|_{L^\infty(\rO)} \|\bw_t\|_{L^2(\rO)} + \|w_t\|_{L^2(\rO)} \\
&\qquad\quad + \big(\|\bw\|_{L^4(\rO)} + \|\bpsi_t\|_{L^4(\rO)}\big) \big(\|\nabla \bw\|_{L^4(\rO)} + \|\nabla^2 \bpsi\|_{L^4(\rO)} \|\bw\|_{L^\infty(\rO)} \big) \\
&\qquad\quad + \|\nabla \bpsi_t\|_{L^2(\rO)} \|\bw\|_{L^\infty(\rO)} + \|\nabla^2 \bpsi\|_{L^4(\rO)} \|\nabla \bw\|_{L^4(\rO)} + \|\nabla^2 \bpsi\|^2_{L^4(\rO)} \|\bw\|_{L^\infty(\rO)} \\
&\qquad\quad + \big(\|\nabla^2 \bpsi\|^2_{L^4(\rO)} + \|\nabla^3 \bpsi\|_{L^4(\rO)} \big) \|w\|_{L^2(\rO)} + \|\nabla^2\bpsi\|_{L^2(\rO)} \|\nabla w\|_{L^2(\rO)} \Big] \\
&\qquad\le C \Big[\varsigma \|\bw_t\|_{L^2(\rO)} + \big(1 + \|w_0\|_{H^1(\rO)} + \rM \sqrt{t} \big) \|\bw\|_{H^{1.25}(\rO)} \Big] \\
&\qquad\quad + C \|w_t\|_{L^2(\rO)} + C_\ve \|w\|_{H^1(\rO)}.
\end{align*}
Therefore, Theorem \ref{thm:elliptic_regularity} together with (\ref{w_LinfH1_est}) implies that
\begin{align}
& \int_0^{\rT_\ve} \hspace{-2pt}\Big[\|w\|^2_{H^2(\rO)} + \ve^2 \|w\|^2_{H^2(\Gamma)} + \|q\|^2_{H^1(\rO)}\Big] dt \le C_\ve \int_0^{\rT_\ve} \hspace{-2pt}\|g\|^2_{L^2(\Gamma)} dt \nonumber\\
&\qquad\quad + C \int_0^{\rT_\ve} \hspace{-2pt}\Big[\big(1 + \|a\|^2_{W^{1,\infty}(\rO)} \big)\|w\|^2_{H^1(\rO)} + \|f\|^2_{L^2(\rO)} + \|g\|^2_{H^{-0.5}(\Gamma)} \Big] dt \nonumber\\
&\qquad \le C_\ve \big(1 + \|w_0\|^2_{H^1(\rO)} + \|h_0\|^2_{H^2(\Gamma)} + \rM^2 {\rT_\ve}\big) {\rT_\ve} + C_\ve \int_0^{\rT_\ve} \hspace{-2pt}\|w\|^2_{H^1(\rO)} dt + C \varsigma^2 \rM^2 \nonumber\\
&\qquad\quad + C \big(1 + \|w_0\|^2_{H^1(\rO)} + \rM^2 {\rT_\ve} \big) \int_0^{\rT_\ve} \hspace{-2pt}\|\bw\|^2_{H^{1.25}(\rO)} dt + C \int_0^{\rT_\ve} \hspace{-2pt}\|w_t\|^2_{L^2(\rO)} dt \nonumber\\
&\qquad \le C_3 \Big[\|w_0\|^2_{H^1(\rO)} + \|h_0\|^2_{H^2(\Gamma)} + 1\Big] + C_4 \varsigma^2 \rM^2 \label{temp_main_estimate}
\end{align}
if $\rT_\ve > 0$ is chosen small enough, where $C_3$ and $C_4$ are generic constants.

Once $(w,q)$ is obtained, we define
\begin{equation}\label{h_evol_eq}
h(t) = h_{0\ve} + \int_0^t \frac{w\cdot \rN}{1+b_0 \bh}\, d\tilde{t}\,.
\end{equation}
Then \Holder's inequality and the normal trace estimate imply that
\begin{align*}
& \|h\|^2_{\H_1(\rT_\ve)} \equiv \int_0^{\rT_\ve} \Big[\|h\|^2_{H^2(\Gamma)} + \|h_t\|^2_{H^2(\Gamma)} + \|h_{tt}\|^2_{H^{-0.5}(\Gamma)} \Big] dt \\
&\quad \le \int_0^{\rT_\ve} \hspace{-2pt} \|w_t\|^2_{L^2(\rO)} dt + C {\rT_\ve} \Big[\|h_0\|^2_{H^2(\Gamma)} + \int_0^{\rT_\ve} \hspace{-2pt}\big(\|w\|^2_{H^2(\Gamma)} + \|w\|^2_{L^\infty(\Gamma)} \|\bh\|^2_{H^2(\Gamma)} \big) dt \Big] \\
&\quad \le C_1 \Big[\|w_0\|^2_{H^1(\rO)} + \|h_0\|^2_{H^{1.5}(\Gamma)} + 1 \Big] + C_2 \varsigma^2 \rM^2 \\
&\qquad + C {\rT_\ve} \Big[\|h_0\|^2_{H^2(\Gamma)} + \frac{\rM^2 + 1}{\ve^2} \Big(C_3 \big[\|w_0\|^2_{H^1(\rO)} + \|h_0\|^2_{H^2(\Gamma)} + 1\big] + C_4 \varsigma^2 \rM^2 \Big) \Big]\,.
\end{align*}
%
As a consequence, the combination of (\ref{w_LinfH1_est}), (\ref{temp_main_estimate}) and an upper bound for $\|h\|_{\H_1(\rT_\ve)}$ (stated in the estimate above) suggests that
\begin{equation}\label{main_estimate1}
\|w\|^2_{\W(\rT_\ve)} + \|h\|^2_{\H_1(\rT_\ve)} \le C_5 \Big[\|w_0\|^2_{H^1(\rO)} \hspace{-1pt}+\hspace{-1pt} \|h_0\|^2_{H^{1.5}(\Gamma)} \hspace{-1pt}+\hspace{-1pt} 1\Big] \hspace{-1pt}+\hspace{-1pt} C_6 \varsigma^2 \rM^2
\end{equation}
for some constants $C_5 \ge \max\{C_1,C_3\}$ and $C_6 \ge \max\{C_2,C_4\}$, if $\rT_\ve >0$ is chosen even smaller. Let $\varsigma$ be small enough so that $C_6 \varsigma^2 \le \smallexp{$\displaystyle{}\frac{1}{2}$}$, and
$$
\rM^2 = 2 C_5 \Big[\|w_0\|^2_{H^1(\rO)} + \|h_0\|^2_{H^2(\Gamma)} + 1\Big]\,.
$$
Then
\begin{equation}\label{linear_estimate}
\|w\|^2_{\W(\rT_\ve)} + \|h\|^2_{\H_1(\rT_\ve)} \le \rM^2 = 2 C_5 \Big[\|w_0\|^2_{H^1(\rO)} + \|h_0\|^2_{H^2(\Gamma)} + 1\Big]\,.
\end{equation}

At this point we have established a map
$$
\Phi:\left\{\begin{array}{ccc}
C_{\rT_\ve}(\rM) &\to& C_{\rT_\ve}(\rM) \vspace{.1cm}\\
(\bw,\bh) &\mapsto& (w,h)
\end{array}
\right.\,.
$$
Suppose that $(w_n, h_n) \in C_{\rT_\ve}(\rM)$ and $(w_n, h_n) \rightharpoonup (w,h)$ in $\W(\rT_\ve)\times \H_1(\rT_\ve)$. Let $(\psi_n, \psi)$ be the corresponding ALE map with $\rJ_n$, $\rA_n$, $\rJ$, $\rA$ defined accordingly, and $(w^n,h^n) \equiv \Phi(w_n,h_n) \in C_{\rT_\ve}(\rM)$. Moreover, let $(\widebar{\rF}_n,\widebar{\rG}_n)$ and $(\widebar{\rF},\widebar{\rG})$ be the correspond forcing. Due to the boundedness of $(w^n, h^n)$ in $\W(\rT_\ve)\times \H_1(\rT_\ve)$ and the convolution we must have
\begin{subequations}\label{convergence}
\begin{alignat}{2}
(w^{n_\ell}, h^{n_\ell}) &\rightharpoonup (\widetilde{\bfw}, \widetilde{h}) \qquad&&\text{in}\quad \W(\rT_\ve)\times \H_1(\rT_\ve) \,,\\
\psi_{n_\ell t} &\rightharpoonup \psi_t &&\text{in}\quad L^2(0,\rT_\ve;H^2(\rO))\,, \\
(w_{n_\ell},h_{n_\ell}) &\to (w,h) &&\text{in}\quad L^2(0,\rT_\ve;H^{1.75}(\rO)) \times \C([0,\rT_\ve];H^{1.75}(\Gamma)) \,,\\
(\psi_{n_\ell},\psi_{n_\ell t}) &\to (\psi,\psi_t) &&\text{in}\quad L^\infty(0,\rT_\ve;W^{2,\infty}(\rO))\times  L^\infty(0,\rT_\ve;H^1(\rO))\,,
\end{alignat}
\end{subequations}
for some subsequence $n_\ell$ of $n$. This implies that
$$
(a_{n_\ell})^{jk}_{rs} \equiv (\rJ_{n_\ell})^{-1} (\rA_{n_\ell})^j_\ell (\psi_{n_\ell})^i_{,s} (\rA_{n_\ell})^k_\ell (\psi_{n_\ell})^i_{,r} + (\rJ_{n_\ell})^{-1} \delta^k_r \delta^j_s
$$
converges to $a^{jk}_{rs} \equiv \rJ^{-1} \rA^j_\ell \psi^i_{,s} \rA^k_\ell \psi^i_{,r} + \rJ^{-1} \delta^k_r \delta^j_s$ in $L^\infty(0,\rT_\ve;W^{1,\infty}(\rO))$, and
\begin{subequations}\label{convergence1}
\begin{align}
\widebar{\rF}_{n_\ell} &\rightharpoonup \widebar{\rF} \qquad\text{in}\quad L^2(0,\rT_\ve;L^2(\rO))\,, \\
\widebar{\rG}_{n_\ell} &\rightharpoonup \widebar{\rG} \qquad\text{in}\quad L^2(0,\rT_\ve;L^2(\Gamma))\,.
\end{align}
\end{subequations}
Since the definition of $\Phi$ suggests that the function $(w^{n_\ell},h^{n_\ell}) \in C_{\rT_\ve}(\rM)$ satisfy the variational identity
\begin{align*}
& \big(w^{n_\ell}_t,\varphi\big)_{L^2(\rO)} + 
\rB_{\psi_{n_\ell}}(w^{n_\ell}, \varphi) + \ve^2 \big(w^{n_\ell \prime}, \varphi^\pprime)_{L^2(\Gamma)} \\
&\qquad\quad = (\widebar{\rF}_{n_\ell}, \varphi)_{L^2(\rO)} + (\widebar{\rG}_{n_\ell},\varphi)_{L^2(\Gamma)} \qquad \Forall \varphi \in \rV \cap H^1_\div(\rO)\,,
\end{align*}
by (\ref{convergence}) and (\ref{convergence1}) we find that $(\widetilde{\bfw},\widetilde{h})$ satisfies that
\begin{align*}
& (\widetilde{\bfw}_t,\varphi)_{L^2(\rO)} + \rB_\psi(\widetilde{\bfw},\varphi) + \ve^2 (\widetilde{\bfw}^\pprime,\varphi^\pprime)_{L^2(\Gamma)} \\
&\qquad\quad = (\widebar{\rF}, \varphi)_{L^2(\rO)} + (\widebar{\rG},\varphi)_{L^2(\Gamma)} \qquad \Forall \varphi \in \rV \cap H^1_\div(\rO)\,.
\end{align*}
However, by the uniqueness of the weak solution (to the linearized problem) we find that $(\widetilde{\bfw},\widetilde{h}) = (w,h)$. This implies that $\Phi:C_{\rT_\ve}(\rM) \to C_{\rT_\ve}(\rM)$ is weakly continuous. Therefore, the Tychonoff fixed-point theorem suggests that there exists a fixed-point of $\Phi$ in $C_{\rT_\ve}(\rM)$. Every fixed-point $(w^\ve,h^\ve)$ with associated Lagrange multiplies $q^\ve$ then is a solution to the regularized equation (\ref{NSreg}).

\section{The $\ve$-independent estimates}\label{sec:ve_indep_est}
To avoid confusion and simplify the notation, throughout this section we omit the super-script $\ve$ and denote the strong solution $(v^\ve, w^\ve, q^\ve, h^\ve)$ by $(\bfv,\bfw,\bfq,\bfh)$. Let $\Psi$ be the solution to
\begin{alignat*}{2}
\Delta \Psi &= 0 &&\text{in}\quad\rO\,,\\
\Psi &= e + (\eta_\ve\star \eta_\ve \star \bfh) \rN \qquad&&\text{on}\quad\Gamma\,,
\end{alignat*}
and $\bfJ$, $\rA$ be defined accordingly. If $\bfv = \bfJ^{-1} (\nabla \Psi) \bfw$, then $(\bfv,\bfq,\bfh)$ satisfies
\begin{subequations}\label{NSreg1}
\begin{alignat}{2}
\bfv^i_t \hspace{-1pt}-\hspace{-1pt} \bfA^k_\ell \big[\big(\bfA^j_\ell \bfv^i_{,j} \hspace{-1pt}+\hspace{-1pt} \bfA^j_i \bfv^\ell_{,j} \big)\big]_{,k} \hspace{-1pt}+\hspace{-1pt} \bfA^j_i \bfq_{,j} \hspace{-1pt}&=\hspace{-1pt} \bfA^\ell_j (\Psi^j_t \hspace{-1pt}-\hspace{-1pt} \bfv^j) \bfv^i_{,\ell} \qquad&&\text{in}\ \ \rO \hspace{-1.5pt}\times\hspace{-1.5pt}  (0,\hspace{-1pt}\rT_\ve\hspace{-1pt}),\\
\bfA^j_i \bfv^i_{,j} \hspace{-1pt}&=\hspace{-1pt} 0 &&\text{in}\ \ \rO \hspace{-1.5pt}\times\hspace{-1.5pt}  (0,\hspace{-1pt}\rT_\ve\hspace{-1pt}),\\
\big[\bfA^j_\ell \bfv^i_{,j} \hspace{-1pt}+\hspace{-1pt} \bfA^j_i \bfv^\ell_{,j} \hspace{-1pt}-\hspace{-1pt} \bfq \delta^\ell_i \big] \bfA^k_\ell \rN_k \hspace{-1pt}&=\hspace{-1pt} \L_\ve(\bfh) \bfA^j_i \rN_i \hspace{-1pt}+\hspace{-1pt} \ve^2 \bfA^s_i \Delta_0 \bfw^s 
\hspace{9.5pt} &&\text{on}\ \ \Gamma \hspace{-1.5pt}\times\hspace{-1.5pt}  (0,\hspace{-1pt}\rT_\ve\hspace{-1pt}),\\
\bfh_t \hspace{-1pt}&=\hspace{-1pt} \frac{\bfJ \bfA^\rT \rN}{1 \hspace{-1pt}+\hspace{-1pt} \rb_0 \bfh_{\ve\ve}} \cdot \bfv &&\text{on}\ \ \Gamma \hspace{-1.5pt}\times\hspace{-1.5pt}  (0,\hspace{-1pt}\rT_\ve\hspace{-1pt}), \label{he_eq}\\
(\bfv,\bfh) \hspace{-1pt}&=\hspace{-1pt} (u_{0\ve}, h_{0\ve}) &&\text{on}\ \ \rO \hspace{-1.5pt}\times\hspace{-1.5pt} \{t \hspace{-1.5pt}=\hspace{-1.5pt} 0\},
\end{alignat}
\end{subequations}
here we recall that 
\begin{align*}
\L_\ve(\bfh) &= \frac{(1 + \rb_0 \bfh_{\ve\ve}) \bfh'' - \rb_0(1 + 2 \rb_0 \bfh_{\ve\ve} + \rb_0^2 \bfh^2_{\ve\ve} + 2 \bfh^{\prime\hspace{1pt}2}_{\ve\ve}) - \bfh_{\ve\ve} \bfh'_{\ve\ve} \rb_0^\pprime}{\big[(1 + \rb_0 \bfh_{\ve\ve})^2 + \bfh^{\prime\hspace{1pt}2}_{\ve\ve}\big]^{3/2}} \,.
\end{align*}
If $\varphi\in \rV$ is a test function, then
\begin{align}
& (\bfJ \bfv_t,\varphi)_{L^2(\rO)} + \int_\rO \bfJ \big(\bfA^j_\ell \bfv^i_{,j} + \bfA^j_i \bfv^\ell_{,j}\big) \rA^k_\ell \varphi^i_{,k} dx \nonumber\\
&\qquad + \ve^2 \int_\Gamma (\bfJ \bfA^j_i \bfv^i)' (\bfJ \bfA^j_k \varphi^k)' dS = \int_\Gamma \L_\ve(\bfh) \bfJ \bfA^j_i \rN_j \varphi^i dS \label{weak_v}\\
&\qquad\qquad + \big(\bfq, \bfJ \bfA^j_i \varphi^i_{,j}\big)_{L^2(\rO)} + (\bfA^\ell_j (\Psi^j_t \hspace{-1pt}-\hspace{-1pt} \bfv^j) \bfv^i_{,\ell}, \varphi^i)_{L^2(\rO)} \qquad\text{a.e. $t\in (0,\rT_\ve)$}\,. \nonumber
\end{align}
In the following, we let $\rM_1$ denote a positive constant (to be determined later) such that
$$
\rM_1^2 \ge \rM^2 = 2 C_5 \Big[\|w_0\|^2_{H^1(\rO)} + \|h_0\|^2_{H^2(\Gamma)} + 1\Big]\,,
$$
and $[0,\rT]$ be the (maximal) time interval in which a solution exists, and will be determined later as well.

\subsection{Key elliptic estimate}
The fundamental reason for that the time of existence $\rT_\ve$ depends on the smoothing parameter $\ve$ is the requirement for extra regularity for $\bpsi$. For example, it requires that $\nabla^2 \bpsi \in L^\infty(0,\rT;L^\infty(\rO))$ in the process of estimating $w_{nt}$ (see estimate (\ref{extra_regularity1})), and we need $a^{jk}_{rs} \in W^{1,\infty}(\rO)$ and $f\in L^2(0,\rT;L^2(\rO))$ to apply Theorem \ref{thm:elliptic_regularity}, while the boundedness of $\nabla^3 \bpsi$ (in some Sobolev spaces) is required in both cases. However, our functional framework (for the linearized problem) only provides us $\bh\in L^\infty(0,\rT;H^2(\Gamma))$ which suggests that $\bpsi \in L^\infty(0,\rT;H^{2.5}(\rO))$, so the requirement of extra regularity has to be provided by the convolution. Therefore, an improvement of the regularity of $\bfh$ is important for obtaining $\ve$-independent estimates.

Before proceeding to the derivation of $\ve$-independent estimates, we state the following lemma, while the proof of this Lemma will be provided in Appendix \ref{app:elliptic}.

\begin{lemma}\label{lem:key_elliptic_estimate}
Let $(\bfv,\bfq,\bfh) \in \W(\rT) \times \Q(\rT) \times \H_1(\rT)$ be a strong solution to {\rm(\ref{NSreg1})}. Then for some generic constant $C$ {\rm(}independent of $\ve${\rm)}
\begin{equation}\label{hH25_estimate}
\int_0^\rT \hspace{-1pt} \|\bfh(t)\|^2_{H^{2.5}(\Gamma)} dt \le C \Big[1 + \|h_0\|^2_{H^{1.5}(\Gamma)} + \|\bfv\|^2_{\V(\rT)} + \|\bfq\|^2_{\Q(\rT)} \Big]\,.
\end{equation}
In particular, the corresponding $\bfJ$, $\bfA$ and $\Psi$ satisfy
\begin{equation}\label{JApsi_H2_estimate}
\begin{array}{l}
\displaystyle{} \int_0^\rT \hspace{-1pt}\Big[\|\bfA\|^2_{H^2(\rO)} + \|\bfJ\|^2_{H^2(\rO)} + \|\nabla \psi\|^2_{H^2(\rO)} \Big] dt \vspace{.1cm}\\
\displaystyle{} \hspace{50pt}\le C \Big[1 + \|h_0\|^2_{H^{1.5}(\Gamma)} + \|\bfv\|^2_{\V(\rT)} + \|\bfq\|^2_{\Q(\rT)}\Big]\,.
\end{array}
\end{equation}
\end{lemma}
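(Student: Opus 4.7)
The plan is to treat the normal-stress balance (\ref{NSreg1}c) as an equation on $\Gamma$ in which the second-order operator $\L_\ve$ acting on $\bfh$ is the most regular object, and to invert it in order to gain one half derivative on $\bfh$ beyond the $H^2(\Gamma)$ control already provided by $\bfh\in\H_1(\rT)$. First, I would take the inner product of (\ref{NSreg1}c) with the vector $\bfA^\rT\rN$; by (\ref{JAtN}) this direction is parallel to $n\circ\psi$, and (\ref{smallness_of_JA_m_1}) gives $|\bfA^\rT\rN|\ge 1/2$ pointwise, so after dividing one obtains an identity of the form
$$
\L_\ve(\bfh)\;=\;S\;-\;\ve^2\,\frac{(\Delta_0\bfw)\cdot\bfA^\rT\rN}{|\bfA^\rT\rN|^2}\qquad\text{on}\ \Gamma\times(0,\rT),
$$
where $S$ is the contraction of the bulk stress $\bfA^j_\ell\bfv^i_{,j}+\bfA^j_i\bfv^\ell_{,j}-\bfq\,\delta^\ell_i$ with two copies of $\bfA^\rT\rN$.

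The stress term $S$ is the easy part: trace theory applied to $\bfv\in L^2(H^2(\rO))$ and $\bfq\in L^2(H^1(\rO))$ places $(\nabla\bfv)|_\Gamma$ and $\bfq|_\Gamma$ in $L^2(H^{0.5}(\Gamma))$, and the multiplier lemma (\ref{duality_ineq}) handles the products with $\bfA$ and $\rN$ once one notes that elliptic regularity for (\ref{bpsi_eq}) together with $\bfh_{\ve\ve}\in L^\infty(0,\rT;H^2(\Gamma))$ (uniformly in $\ve$, since convolution does not lose $H^2$ and (\ref{h_H2_est1}) is $\ve$-free) gives $\bfA\in L^\infty(0,\rT;H^s)$ for some $s>1/2$ uniformly in $\ve$. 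This produces an $L^2(H^{0.5})$ bound on $S$ controlled by $\|\bfv\|_{\V(\rT)}+\|\bfq\|_{\Q(\rT)}$.

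The main obstacle is the regularization term $\ve^2(\Delta_0\bfw)\cdot\bfA^\rT\rN$, whose $H^{0.5}(\Gamma)$ norm is not controlled uniformly in $\ve$ by the $\V$- and $\Q$-norms alone. To absorb it, I would pair the extracted identity with $\bfh''$ in $L^2(\Gamma)$ and integrate in time. On the left, the principal part of $\L_\ve(\bfh)\,\bfh''$ equals $(1+\rb_0\bfh_{\ve\ve})\bigl[(1+\rb_0\bfh_{\ve\ve})^2+\bfh_{\ve\ve}^{\prime\hspace{1pt}2}\bigr]^{-3/2}(\bfh'')^2$, a pointwise nonnegative quantity bounded below uniformly in $\ve$; on the right, the suspect term can be rewritten, using (\ref{he_eq}) together with $\bfv=\bfJ^{-1}\nabla\Psi\,\bfw$ to substitute $\bfw\cdot\rN|_\Gamma=(1+\rb_0\bfh_{\ve\ve})\bfh_t$, so that its pairing with $\bfh''$ produces $\tfrac{\ve^2}{2}\tfrac{d}{dt}\int_\Gamma(\bfh'')^2\,dS$ plus commutator errors governed by (\ref{comm_est_temp1})-(\ref{comm_est_temp2}) and the smallness (\ref{smallness_of_bfh}). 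Integrating in time, the favourable-sign time derivative is swallowed into initial data, and the remainder is bounded by $\|\bfv\|_{\V(\rT)}+\|\bfq\|_{\Q(\rT)}+\text{l.o.t.}$, giving the sought uniform-in-$\ve$ $L^2(H^{0.5})$ control of $\L_\ve(\bfh)$.

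Once $\L_\ve(\bfh)\in L^2(0,\rT;H^{0.5}(\Gamma))$ is in hand, the uniform upper and lower bounds on its principal coefficient (depending only on $\|\bfh_{\ve\ve}\|_{L^\infty(\Gamma)}$, hence uniform in $\ve$ by (\ref{smallness_of_bfh})) together with one-dimensional elliptic regularity on the closed curve $\Gamma$ yield
$$
\|\bfh\|_{H^{2.5}(\Gamma)}\;\lesssim\;\|\L_\ve(\bfh)\|_{H^{0.5}(\Gamma)}+\|\bfh\|_{H^{1.5}(\Gamma)};
$$
integrating in time and using the trivial bound $\|\bfh\|_{L^\infty(H^{1.5})}\lesssim\|h_0\|_{H^{1.5}(\Gamma)}+\sqrt{\rT}\,\|\bfh_t\|_{L^2(H^{1.5}(\Gamma))}$ delivers (\ref{hH25_estimate}). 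Finally, (\ref{JApsi_H2_estimate}) is a direct consequence of standard elliptic regularity for the harmonic-extension problem (\ref{bpsi_eq}): since $\|\bfh_{\ve\ve}\|_{H^{2.5}(\Gamma)}\le\|\bfh\|_{H^{2.5}(\Gamma)}$, one gets $\|\Psi\|_{H^3(\rO)}\lesssim 1+\|\bfh\|_{H^{2.5}(\Gamma)}$, which controls the $H^2(\rO)$-norms of $\nabla\Psi$, $\bfA$, and $\bfJ$ via the chain rule.
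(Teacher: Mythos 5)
Your plan correctly identifies the decisive substitution: via the kinematic condition (\ref{he_eq}), the nasty regularization term $\ve^2\Delta_0\bfw$ is really $\ve^2\bfh_t''$ up to lower-order pieces, and the equation should be treated as the degenerate parabolic ODE $\bfh''+\ve^2\bfh_t''=(\text{source})$ rather than as a bare elliptic equation for $\L_\ve(\bfh)$. The paper does exactly this.

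However, there is a genuine gap in the pairing step. You pair the identity with $\bfh''$ \emph{in $L^2(\Gamma)$}. That energy estimate only controls
\[
\int_0^\rT\int_\Gamma(\bfh'')^2\,dS\,dt+\ve^2\big\|\bfh''(\rT)\big\|^2_{L^2(\Gamma)}\,,
\]
i.e., $\bfh\in L^2(0,\rT;H^2(\Gamma))$, which is already known from $\bfh\in\H_1(\rT)$ via (\ref{h_H2_est1}) and gains nothing. Your conclusion that this pairing yields ``the sought uniform-in-$\ve$ $L^2(H^{0.5})$ control of $\L_\ve(\bfh)$'' does not follow: an $L^2(\Gamma)$-energy estimate cannot produce an $H^{0.5}(\Gamma)$ bound on $\L_\ve(\bfh)$, and without that the final one-dimensional elliptic-regularity step $\|\bfh\|_{H^{2.5}}\lesssim\|\L_\ve(\bfh)\|_{H^{0.5}}+\|\bfh\|_{H^{1.5}}$ has nothing to hook onto. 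Moreover, trying to estimate $\|\L_\ve(\bfh)\|_{L^2(H^{0.5})}$ directly is hopeless because $\ve^2\Delta_0\bfw\in H^{0.5}(\Gamma)$ would demand $\bfw\in H^{2.5}(\Gamma)$, which is not available.

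The missing idea is to run the energy estimate at the $H^{0.5}(\Gamma)$ level rather than $L^2(\Gamma)$: test the clean parabolic ODE $\bfh''+\ve^2\bfh_t''=\widebar{f}$ against $\bfh''$ \emph{in the $H^{0.5}(\Gamma)$ inner product}. Because $\bfh''\cdot\ve^2\bfh_t''$ still gives a perfect time derivative at the $H^{0.5}$ level, one obtains
\[
\int_0^\rT\|\bfh''\|^2_{H^{0.5}(\Gamma)}\,dt\le \ve^2\|\bfh''(0)\|^2_{H^{0.5}(\Gamma)}+\int_0^\rT\|\widebar{f}\|^2_{H^{0.5}(\Gamma)}\,dt\,,
\]
which is exactly Proposition \ref{prop:key_elliptic_est} in the paper. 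The source $\widebar{f}$ is in $L^2(H^{0.5})$ precisely because, after the substitution $\bfw\cdot\rN=(\opbh)\bfh_t$, the highest derivative of $\bfw$ surviving in $\widebar{f}$ is of order one (giving $\ve^2\|\bfw\|_{H^{1.5}(\Gamma)}$, not $\ve^2\|\bfw\|_{H^{2.5}(\Gamma)}$), and the quasilinear piece $\varsigma\|\bfh\|_{H^{2.5}(\Gamma)}$ is absorbed by (\ref{smallness_of_bfh}). The paper also inserts a short qualitative step — showing $\bfh\in L^\infty(0,\rT;H^{2.5}(\Gamma))$ with an $\ve$-dependent constant via an integrating factor — so that all the above manipulations are a priori legitimate; your proposal should include this too, though it is a smaller omission.
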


\begin{remark}
We emphasize that with the regularity of $\Psi$ given by {\rm(\ref{JApsi_H2_estimate})}, 
$\bfw$ and $\bfv$ cannot belong to $L^2(0,\rT;H^2(\rO))$ simultaneously: if $\bfv\in L^2(0,\rT;H^2(\rO))$, the identity
$$
D^2 \bfw = D^2 (\bfJ \bfA) \bfv + 2 D (\bfJ \bfA) D \bfv + \bfJ \bfA D^2 \bfv
$$
suggests that $\bfw \not\in L^2(0,\rT;H^2(\rO))$ since $D^2 (\bfJ \bfA) \bfv \not\in L^2(0,\rT;L^2(\rO))$ {\rm(}for it would require that $\bfv \in L^\infty(0,\rT;L^\infty(\rO))${\rm)}. 
This is the main reason that we switch back to the ALE formulation {\rm(\ref{NSreg1})} instead of directly looking for $\ve$-independent estimates using {\rm(\ref{NSreg})}, even though we have improved regularity for $\bfh$.
\end{remark}


\subsection{The estimate for $\bfv_t$ in $L^2(0,\rT;L^2(\rO))$}\label{sec:vt_ve_indep_est}
As suggested in Section \ref{sec:Galerkin}, estimating $\bfv_t\in L^2(0,\rT;L^2(\rO))$ requires that we use $\bfv_t$ as a test function in (\ref{weak_v}). Since $\bfv_t$ does not belongs to $H^1(\rO)$, it cannot be used as a test function in (\ref{weak_v}). To resolve this issue, we adopt the approach of difference quotient. We note that since $\bfv \in \C([0,\rT];H^1(\rO))$, $D_{\Delta t} \bfv(t) \equiv \smallexp{$\displaystyle{} \frac{\bfv(t+\Delta t) - \bfv(t)}{\Delta t}$}$ can be used as a test function in (\ref{weak_v}) for all $t \in [0,\rT]$. By doing so, for $\tilde{t} = t$ and $\tilde{t} = t+\Delta t$ for almost all $t\in (0,\rT)$ and $\Delta t>0$,
\begin{align}
& \Big(\bfJ(\tilde{t}) \bfv_t(\tilde{t}), D_{\Delta t} \bfv(t)\Big)_{L^2(\rO)} + \int_\rO \bfJ(\tilde{t}) \Big[\bfA^j_\ell(\tilde{t}) \bfv^i_{,j}(\tilde{t}) + \bfA^j_i(\tilde{t}) \bfv^\ell_{,j}(\tilde{t})\Big] \rA^k_\ell(\tilde{t}) \big[D_{\Delta t} \bfv(t)\big]^i_{,k} \, dx \nonumber\\
&\qquad + \ve^2 \int_\Gamma (\bfJ(\tilde{t}) \bfA^j_i(\tilde{t}) \bfv^i(\tilde{t}))' \Big[\bfJ(\tilde{t}) \bfA^j_k(\tilde{t}) D_{\Delta t} \bfv^k(t)\Big]' dS \label{difference_quotient}\\
&\quad = \int_\Gamma \hspace{-2pt}\L_\ve(\bfh(\tilde{t})) \bfJ(\tilde{t}) \bfA^j_i(\tilde{t}) \rN_j D_{\Delta t} \bfv^i(t) dS + \Big(\bfq(\tilde{t}), \bfJ(\tilde{t}) \bfA^j_i(\tilde{t}) \big[D_{\Delta t} \bfv(t)\big]^i_{,j} \Big)_{L^2(\rO)} \nonumber \\
&\qquad + \Big(\bfJ(\tilde{t}) \bfA^\ell_j(\tilde{t}) (\Psi^j_t(\tilde{t}) \hspace{-1pt}-\hspace{-1pt} \bfv^j(\tilde{t})) \bfv^i_{,\ell}(\tilde{t}), D_{\Delta t} v^i(t) \Big)_{L^2(\rO)}\,. \nonumber
\end{align}
Summing (\ref{difference_quotient}) over $\tilde{t} = t$ and $\tilde{t} = t+\Delta t$, integrating the sum over the time interval $(a,b) \subseteq [0,\rT]$, and then passing $\Delta t \to 0$ will then result in the desired estimates.

To be more precise, we note that
\begin{align*}
& \int_\rO \hspace{-2pt}\bfJ(t\hspace{-1pt}+\hspace{-1pt}\Delta t) \Big[\bfA^j_\ell(t\hspace{-1pt}+\hspace{-1pt}\Delta t) \bfv^i_{,j}(t\hspace{-1pt}+\hspace{-1pt}\Delta t) \hspace{-1pt}+\hspace{-1pt} \bfA^j_i(t\hspace{-1pt}+\hspace{-1pt}\Delta t) \bfv^\ell_{,j}(t\hspace{-1pt}+\hspace{-1pt}\Delta t)\Big] \rA^k_\ell(t\hspace{-1pt}+\hspace{-1pt}\Delta t) \big[D_{\Delta t} \bfv(t) \big]^i_{,k} \, dx \\
&\qquad + \int_\rO \bfJ(t) \Big[\bfA^j_\ell(t) \bfv^i_{,j}(t) + \bfA^j_i(t) \bfv^\ell_{,j}(t)\Big] \rA^k_\ell(t) \big[D_{\Delta t} \bfv(t)\big]^i_{,k} \, dx \\
&\qquad\qquad = \frac{1}{\Delta t} \int_\rO \bfJ(\tilde{t}) \Big[\bfA^j_\ell(\tilde{t}) \bfv^i_{,j}(\tilde{t}) + \bfA^j_i(\tilde{t}) \bfv^\ell_{,j}(\tilde{t})\Big] \rA^k_\ell(\tilde{t}) \bfv^i_{,k} (\tilde{t}) dx \Big|_{\,\tilde{t}=t}^{\,\tilde{t}=t+\Delta t} \\
&\qquad\qquad\quad - \int_\rO \frac{(\bfJ \bfA^k_\ell \bfA^j_\ell)(t+\Delta) - (\bfJ \bfA^k_\ell \bfA^j_\ell)(t)}{\Delta t} \bfv^i_{,j}(t) \bfv^i_{,k}(t+\Delta t)\, dx
\end{align*}
and due to the fact that $\bfA^j_i \bfv^i_{,j} = 0$\,,
\begin{align*}
& \big(\bfq(t+\Delta t), \bfJ(t+\Delta t) \bfA^j_i(t+\Delta t) D_{\Delta t} \bfv(t) \big)_{L^2(\rO)} \hspace{-1pt}+ \big(\bfq(t), \bfJ(t) \bfA^j_i(t) \big[D_{\Delta t} \bfv(t)\big]^i_{,j} \big)_{L^2(\rO)} \\
&\qquad = - \Big(\bfq(t+\Delta t), \frac{\bfJ(t+\Delta t) \bfA^j_i(t+\Delta t) - \bfJ(t) \bfA^j_i(t)}{\Delta t}\, \bfv^i_{,j}(t) \Big)_{L^2(\rO)} \\
&\qquad\quad - \Big(\bfq(t), \frac{\bfJ(t+\Delta t) \bfA^j_i(t+\Delta t) - \bfJ(t) \bfA^j_i(t)}{\Delta t}\, \bfv^i_{,j}(t+\Delta t) \Big)_{L^2(\rO)} \,.
\end{align*}
By the fact that
\begin{alignat*}{2}
f(\cdot+\Delta t) &\to f(\cdot) \qquad&&\text{in}\quad L^2(a,b;X) \quad \text{if \ $f\in L^2(0,\rT;X)$}\,, \\
\frac{f(\cdot+\Delta t) - f(\cdot)}{\Delta t} &\rightharpoonup f_t(\cdot) \qquad&&\text{in}\quad L^2(a,b;X) \quad\text{if \ $f_t\in L^2(0,\rT;X)$}\,,
\end{alignat*}
if $[a,b]\subseteq (0,\rT)$, we obtain that
\begin{align*}
& \lim_{\Delta t\to 0} \int_a^b \Big[\int_\rO \bfJ(t \hspace{-1pt}+\hspace{-1pt} \Delta t) \big(\bfA^j_\ell(t \hspace{-1pt}+\hspace{-1pt} \Delta t) \bfv^i_{,j}(t \hspace{-1pt}+\hspace{-1pt} \Delta t) \hspace{-1pt}+\hspace{-1pt} \bfA^j_i(t \hspace{-1pt}+\hspace{-1pt} \Delta t) \bfv^\ell_{,j}(t \hspace{-1pt}+\hspace{-1pt} \Delta t)\big) \times \\
&\hspace{56pt}\times \hspace{-1pt} \rA^k_\ell(t \hspace{-1pt}+\hspace{-1pt} \Delta t) \big[D_{\Delta t} \bfv(t)\big]^i_{,k} \\
&\hspace{56pt} +\hspace{-1pt} \bfJ(t) \big(\bfA^j_\ell(t) \bfv^i_{,j}(t) \hspace{-1pt}+\hspace{-1pt} \bfA^j_i(t) \bfv^\ell_{,j}(t)\big) \rA^k_\ell(t) \big[D_{\Delta t} \bfv(t)\big]^i_{,k} \, dx\Big] dt \\
&\qquad = \int_a^b \frac{1}{2} \frac{d}{dt} \big\|\sqrt{\bfJ}\,\big[(\nabla \bfv) \bfA \hspace{-1pt}+\hspace{-1pt} \bfA^\rT (\nabla \bfv)^{\rT}\big]\big\|^2_{L^2(\rO)} dt - \int_a^b \int_\rO (\bfJ \bfA^k_\ell \bfA^j_\ell)_t \bfv^i_{,j} \bfv^i_{,k} dx dt
\end{align*}
and
\begin{align*}
& \lim_{\Delta t \to 0} \int_a^b \Big[\hspace{-1pt}\Big(\bfq(t\hspace{-1pt}+\hspace{-1pt}\Delta t), \bfJ(t\hspace{-1pt}+\hspace{-1pt}\Delta t) \bfA^j_i(t\hspace{-1pt}+\hspace{-1pt}\Delta t) \big[D_{\Delta t} \bfv(t)\big]^i_{,j}\Big)_{L^2(\rO)} \\
&\hspace{37pt} + \hspace{-2pt}\Big(\bfq(t), \bfJ(t) \bfA^j_i(t) \big[D_{\Delta t} \bfv(t)\big]^i_{,j}\Big)_{L^2(\rO)}\Big] dt \hspace{-1pt}=\hspace{-1pt} -\hspace{-1pt} 2 \int_a^b \hspace{-2pt} \big(\bfq, (\bfJ \rA^j_i)_t \bfv^i_{,j}\big)_{L^2(\rO)} dt\,.
\end{align*}
Moreover,
\begin{align*}
&\lim_{\Delta t \to 0} \int_a^b \int_\Gamma \Big[\big(\L_\ve(\bfh) \bfJ \bfA^j_i\big) (t+\Delta t) + \big(\L_\ve(\bfh) \bfJ \bfA^j_i\big)(t)\Big] \rN_j D_{\Delta t} \bfv^i(t) dx dt \\
&\qquad = 2 \int_a^b \big\langle \L_\ve(\bfh), \big(\bfJ \bfA^j_i \rN_j \bfv)_t \big\rangle dt - 2 \int_a^b \int_\Gamma \L_\ve(\bfh) \big(\bfJ \bfA^j_i)_t \rN_j \bfv\, dS dt \\
&\qquad \le 2 \int_a^b \big\langle \L_\ve(\bfh), \big(\bfJ \bfA^j_i \rN_j \bfv)_t \big\rangle dt + C \int_a^b \big(\|\bfh\|_{H^2(\Gamma)} + 1\big) \|\bfh_t\|_{W^{1,4}(\Gamma)} \|\bfv\|_{L^4(\Gamma)} dt
\end{align*}
and by Young's inequality,
\begin{align*}
&\lim_{\Delta t \to 0} \int_a^b \int_\Gamma \Big[\big(\bfJ \bfA^j_i \Delta_0 (\bfJ \bfA^j_\ell \bfv^\ell)\big)(t+\Delta t) + \big(\bfJ \bfA^j_i \Delta_0 (\bfJ \bfA^j_\ell \bfv^\ell)\big)(t)\Big] D_{\Delta t} \bfv^i(t) dS dt \\
&\quad = 2 \int_a^b \int_\Gamma (\bfJ \bfA^j_i \bfv^i_t) \Delta_0 (\bfJ \bfA^j_\ell \bfv^\ell) dS = 2 \int_a^b \int_\Gamma \big[(\bfJ \bfA^j_i \bfv^i)_t - (\bfJ \bfA^j_i)_t \bfv^i \big]\Delta_0 (\bfJ \bfA^j_\ell \bfv^\ell) dS \\
&\quad \le 2 \int_a^b \int_\Gamma \big[(\bfJ \bfA^j_i \bfv^i)_t \Delta_0 (\bfJ \bfA^j_\ell \bfv^\ell) dS dt + \frac{C_{\delta_1}}{\ve^2} \int_a^b \|\nabla \Psi_t\|^2_{L^4(\Gamma)} \|\bfv\|^2_{L^4(\Gamma)} dt \\
&\qquad + \delta_1 \ve^2 \int_a^b \|\Delta_0 (\bfJ \bfA \bfv)\|^2_{L^2(\Gamma)} dt \,.
\end{align*}
As a consequence, summing (\ref{difference_quotient}) over $\tilde{t} = t$ and $\tilde{t} = t+\Delta t$ and then integrating in $t$ over the time interval $(a,b)$ and then passing $\Delta t \to 0$, we find that
\begin{align}
& \frac{1}{2} \big\|\sqrt{\bfJ}\big[\nabla \bfv \bfA + \bfA^\rT (\nabla \bfv)^{\rT}\big]\big\|^2_{L^2(\rO)} \Big|_{t=a}^{t=b} + \int_a^b \|\sqrt{\bfJ} \bfv_t\|^2_{L^2(\rO)} dt \nonumber\\
&\qquad \le 2 \int_a^b \big\langle \L_\ve(\bfh), \big(\bfJ \bfA^j_i \rN_j \bfv^i)_t \big\rangle dt + 2 \ve^2 \int_a^b \int_\Gamma \big[(\bfJ \bfA^j_i \bfv^i)_t \Delta_0 (\bfJ \bfA^j_\ell \bfv^\ell) dS dt \nonumber\\
&\qquad\quad + C \int_a^b \big(\|\bfh\|_{H^2(\Gamma)} + 1\big) \|\bfh_t\|_{W^{1,4}(\Gamma)} \|\bfv\|_{L^4(\Gamma)} dt \label{bvt_est_temp_ineq}\\
&\qquad\quad + C \int_a^b \|\nabla \Psi_t\|_{L^2(\rO)} \Big[\|\nabla \bfv\|^2_{L^4(\rO)} + \|\bfq\|_{L^4(\rO)} \|\nabla \bfv\|_{L^4(\rO)} \Big] dt \nonumber\\
&\qquad\quad + C_{\delta_1} \int_a^b \|\nabla \Psi_t\|^2_{L^4(\Gamma)} \|\bfv\|^2_{L^4(\Gamma)} dt + \delta_1 \ve^4 \int_a^b \|\Delta_0 (\bfJ \bfA \bfv)\|^2_{L^2(\Gamma)} dt\,. \nonumber
\end{align}
We note that
\begin{align*}
\|\nabla \Psi_t\|_{L^4(\Gamma)} &\le C \|\bfh_t\|_{H^{1.25}(\Gamma)} \le C \big[\|\bfJ \bfA\|_{H^{1.25}(\Gamma)} \|\bfv\|_{H^{0.7}(\Gamma)} + \|\bfJ \bfA\|_{H^{0.7}(\Gamma)} \|\bfv\|_{H^{1.25}(\Gamma)} \big] \\
&\le C \big[\|\bfh\|_{H^{2.25}(\Gamma)} \|\bfv\|_{H^{1.2}(\rO)} + \|\bfv\|_{H^{1.75}(\rO)} \big]\,,
\end{align*}
by 
Young's inequality we obtain that
\begin{align*}
\|\nabla \Psi_t\|^2_{L^4(\Gamma)} \|\bfv\|^2_{L^4(\Gamma)} &\le C \big[\|\bfh\|^2_{H^{2.25}(\Gamma)} \|\bfv\|^2_{H^{1.2}(\rO)} \hspace{-2pt}+\hspace{-1pt} \|\bfv\|^2_{H^{1.75}(\rO)} \big] \|\bfv\|^2_{H^{0.75}(\rO)} \\
&\le C \big[\|\bfh\|^{11/8}_{H^{1.7}(\Gamma)} \|\bfh\|^{5/8}_{H^{2.5}(\Gamma)} \|\bfv\|^{18/5}_{H^1(\rO)} \|\bfv\|^{2/5}_{H^2(\rO)} \hspace{-2pt}+\hspace{-1pt} \|\bfv\|^{5/2}_{H^1(\rO)} \|\bfv\|^{3/2}_{H^2(\rO)} \big] \\
&\le C_\delta \big[\|\bfv\|^4_{H^1(\rO)} \hspace{-2pt}+\hspace{-1pt} \|\bfv\|^{10}_{H^1(\rO)}\big] \hspace{-2pt}+\hspace{-1pt} \delta \big[\|\bfv\|^2_{H^2(\rO)} \hspace{-2pt}+\hspace{-1pt} \|\bfh\|^2_{H^{2.5}(\Gamma)} \big] \,.
\end{align*}
Since (\ref{bvt_est_temp_ineq}) holds for all $0<a<b<\rT$ and $\bfv \in \C([0,\rT];H^1(\rO))$, passing $a\to 0$ and $b\to t$ for some $t \le \rT$, by (\ref{hH25_estimate}) estimate (\ref{bvt_est_temp_ineq}) implies that
\begin{align*}
& \frac{1}{2} \big\|\sqrt{\bfJ}\big[\nabla \bfv \bfA \hspace{-2pt}+\hspace{-1pt} \bfA^\rT (\nabla \bfv)^{\rT}\big]\big\|^2_{L^2(\rO)} \hspace{-2pt}+\hspace{-1pt} \int_0^t \|\sqrt{\bfJ} \bfv_t\|^2_{L^2(\rO)} d\tilde{t} \le C \big[1 \hspace{-2pt}+\hspace{-1pt} \|u_0\|^2_{H^1(\rO)} \hspace{-2pt}+\hspace{-1pt} \|h_0\|^2_{H^{1.5}(\Gamma)}\big] \\
&\qquad \hspace{-2pt}+\hspace{-1pt} 2 \int_0^t \big\langle \L_\ve(\bfh), \big(\bfJ \bfA^j_i \rN_j \bfv)_t \big\rangle\, d\tilde{t} \hspace{-2pt}+\hspace{-1pt} 2 \ve^2 \int_0^t \int_\Gamma \big[(\bfJ \bfA^j_i \bfv^i)_t \Delta_0 (\bfJ \bfA^j_\ell \bfv^\ell) dS d\tilde{t} \\
&\qquad \hspace{-2pt}+\hspace{-1pt} C_{\delta,\delta_1} \int_0^t \Big[\big(1 \hspace{-2pt}+\hspace{-1pt} \|\bfh\|^2_{H^2(\Gamma)}\big) \|\bfv\|^2_{H^1(\rO)} \hspace{-2pt}+\hspace{-1pt} \|\bfv\|^4_{H^1(\rO)} \hspace{-2pt}+\hspace{-1pt} \|\bfv\|^{10}_{H^1(\rO)} \Big]d\tilde{t} \\
&\qquad \hspace{-2pt}+\hspace{-1pt} \delta \int_0^t \Big[\|\bfv\|^2_{H^2(\rO)} \hspace{-2pt}+\hspace{-1pt} \|\bfq\|^2_{H^1(\rO)} \Big] d\tilde{t} \hspace{-2pt}+\hspace{-1pt} \delta_1 \ve^4 \int_0^t \|\Delta_0 (\bfJ \bfA \bfv)\|^2_{L^2(\Gamma)} d\tilde{t} \,.
\end{align*}
Integrating by parts in time,
\begin{align*}
& \int_0^t \hspace{-2pt}\big\langle \L_\ve(\bfh), \big(\bfJ \bfA^j_i \rN_j \bfv)_t \big\rangle\, d\tilde{t} = \big\langle \L_\ve(\bfh), \bfJ \bfA^j_i \rN_j \bfv \big\rangle\Big|_{\,\tilde{t}=0}^{\,\tilde{t}=t} - \int_0^t \hspace{-2pt}\big\langle \big(\L_\ve(\bfh)\big)_t, \bfJ \bfA^j_i \rN_j \bfv \big\rangle\, d\tilde{t} \\
&\quad \le C_{\delta,\delta_2} \Big[1 \hspace{-1pt}+\hspace{-1pt} \|\bfv\|^2_{L^2(\rO)} \hspace{-2pt}+\hspace{-1pt} \int_0^t \hspace{-2pt}\|\bfv\|^2_{H^1(\rO)} d\tilde{t} \Big] \hspace{-1pt}+\hspace{-1pt} \delta \Big[\hspace{-1pt}\|\bfh\|^2_{H^2(\Gamma)} \hspace{-2pt}+\hspace{-2pt} \int_0^t \hspace{-2pt}\|\bfv\|^2_{H^2(\rO)} d\tilde{t}\Big] \hspace{-1.5pt}+\hspace{-1pt} \delta_2 \|\bfv\|^2_{H^1(\rO)},
\end{align*}
while on the other hand,
\begin{align*}
\int_0^t \hspace{-1pt}\int_\Gamma \hspace{-1pt}\big[(\bfJ \bfA^j_i \bfv^i)_t \Delta_0 (\bfJ \bfA^j_\ell \bfv^\ell) dS d\tilde{t} \hspace{-1pt}=\hspace{-1pt} - \frac{1}{2} \int_0^t \hspace{-1pt}\frac{d}{dt} \big\|(\bfJ \bfA \bfv)'\big\|^2_{L^2(\Gamma)} d\tilde{t} \hspace{-1pt}=\hspace{-1pt} - \frac{1}{2} \big\|(\bfJ \bfA \bfv)'\big\|^2_{L^2(\Gamma)}\Big|_{\,\tilde{t}=0}^{\,\tilde{t} = t}\,.
\end{align*}
Moreover,
\begin{align*}
\|\Delta_0 (\bfJ \bfA \bfv)\|_{L^2(\Gamma)} &\le C \Big[\|\bfh_\ve\|_{H^3(\Gamma)} \|\bfv\|_{L^\infty(\Gamma)} + \|\bfh_\ve\|_{H^{2.25}(\Gamma)} \|\bfv\|_{H^{1.25}(\Gamma)} + \|\bfv\|_{H^2(\Gamma)} \Big] \\
&\le C \Big[\frac{1}{\ve} \|\bfh\|_{H^2(\Gamma)} \|\bfv\|^{1/2}_{L^2(\rO)} \|\bfv\|^{1/2}_{H^2(\rO)} + \frac{1}{\ve^{1/2}} \|\bfv\|_{H^2(\rO)} + \|\bfv\|_{H^2(\Gamma)} \Big]
\end{align*}
and
\begin{align*}
\ve^2 \|(\bfJ \bfA \bfv)'\|^2_{L^2(\Gamma)} &\ge \frac{\ve^2}{c} \|\bfv^\pprime\|^2_{L^2(\Gamma)} - C \ve^2 \|\bfh_\ve\|^2_{H^{2.25}(\Gamma)} \|\bfv\|^2_{H^{0.25}(\Gamma)} \\
&\ge \frac{\ve^2}{c} \|\bfv^\pprime\|^2_{L^2(\Gamma)} - C_{\delta_2} \|\bfv\|^2_{L^2(\rO)} - \delta_2 \|\bfv\|^2_{H^1(\rO)} \,.
\end{align*}
Therefore, by $\ve^2 \|u_{0\ve}\|^2_{H^1(\Gamma)} \le C \|u_0\|^2_{H^1(\rO)}$, if $\ve < 1$,
\begin{equation}\label{bfvt_LinfL2_est_temp}
\begin{array}{l}
\displaystyle{} \big\|\Def \bfv(t)\big\|^2_{L^2(\rO)} \hspace{-2pt}+\hspace{-1pt} \ve^2 \big\|\bfv^\pprime(t)\big\|^2_{L^2(\Gamma)} \hspace{-2pt}+\hspace{-1pt} \int_0^t \|\bfv_t\|^2_{L^2(\rO)} d\tilde{t} \vspace{.1cm}\\
\displaystyle{} \quad \le C \Big[1 \hspace{-2pt}+\hspace{-1pt} \|u_0\|^2_{H^1(\rO)} \hspace{-2pt}+\hspace{-1pt} \|h_0\|^2_{H^{1.5}(\Gamma)} \Big] \hspace{-2pt}+\hspace{-1pt} C_{\delta,\delta_1} \int_0^t \big(1 \hspace{-2pt}+\hspace{-1pt} \|\bfh\|^4_{H^2(\Gamma)} \big) \|\bfv\|^2_{H^1(\rO)} d\tilde{t} \vspace{.1cm}\\
\displaystyle{} \qquad \hspace{-2pt}+\hspace{-1pt} C_{\delta,\delta_2} \big(1 \hspace{-2pt}+\hspace{-1pt} \|\bfv\|^2_{L^2(\rO)}\big) \hspace{-2pt}+\hspace{-1pt} \delta_2 \|\bfv(t)\|^2_{H^1(\rO)} \hspace{-2pt}+\hspace{-1pt} C_{\delta,\delta_1} \int_0^t \big[\|\bfv\|^4_{H^1(\rO)} \hspace{-2pt}+\hspace{-1pt} \|\bfv\|^{10}_{H^1(\rO)} \big] d\tilde{t} \vspace{.1cm}\\
\displaystyle{} \qquad \hspace{-2pt}+\hspace{-1pt} \delta \int_0^t \Big[\|\bfv\|^2_{H^2(\rO)} \hspace{-2pt}+\hspace{-1pt} \|\bfq\|^2_{H^1(\rO)} \Big] d\tilde{t} \hspace{-2pt}+\hspace{-1pt} \delta_1 \Big[\|\bfh(t)\|^2_{H^2(\Gamma)} \hspace{-2pt}+\hspace{-1pt} \ve^4 \int_0^t \|\bfv\|^2_{H^2(\Gamma)} d\tilde{t}\Big]\,.
\end{array}
\end{equation}
On the other hand, the use of $\bfv$ as a test function in (\ref{weak_v}) implies that
\begin{align*}
& \frac{1}{2} \frac{d}{dt} \|\sqrt{\bfJ} \bfv\|^2_{L^2(\rO)} + \frac{1}{2} \big\|\sqrt{\bfJ}\big[\nabla \bfv \bfA + \bfA^\rT (\nabla \bfv)^{\rT}\big]\big\|^2_{L^2(\rO)} \le C \big(\|\bfh\|_{H^2(\Gamma)} + 1\big) \|\bfv\|_{L^2(\Gamma)} \\
& \qquad\quad \le C_{\delta_3} \big(1 + \|\bfv\|^2_{L^2(\rO)} \big) + \delta_3 \|\nabla \bfv\|^2_{L^2(\rO)} + C \|\bfh\|^2_{H^2(\Gamma)} \,,
\end{align*}
where we use the trace estimates and Young's inequality to conclude the last inequality. Integrating the equality above in time over the time interval $(0,t)$ and choosing $\delta_3 > 0$ small enough, by the Gronwall inequality we obtain that
\begin{equation}
\|\bfv(t)\|^2_{L^2(\rO)} + \int_0^t \|\nabla \bfv\|^2_{L^2(\rO)} d\tilde{t} \le C \Big[\|u_0\|^2_{L^2(\rO)} + \int_0^t \big(\|\bfh\|^2_{H^2(\Gamma)} + 1\big) d\tilde{t} \Big] \,. \label{bfv_L2H1_est}
\end{equation}
Combining estimates (\ref{bfvt_LinfL2_est_temp}) and (\ref{bfv_L2H1_est}), by Korn's inequality and choosing $\delta_2$ small enough we conclude that
\begin{align}
& \big\|\bfv(t)\big\|^2_{H^1(\rO)} + \ve^2 \big\|\bfv(t)\big\|^2_{H^1(\Gamma)} + \int_0^t \|\bfv_t\|^2_{L^2(\rO)} d\tilde{t} \le C_\delta \big[1 + \|u_0\|^2_{H^1(\rO)} + \|h_0\|^2_{H^{1.5}(\Gamma)} \big] \nonumber\\
&\qquad\quad + C_{\delta,\delta_1} \int_0^t \Big[\big(\|\bfh\|^4_{H^2(\Gamma)} + 1\big) \big(1+\|\bfv\|^2_{H^1(\rO)}\big) + \|\bfv\|^{10}_{H^1(\rO)} \Big] d\tilde{t} \label{bfvt_L2L2_est}\\
&\qquad\quad + \delta \int_0^t \Big[\|\bfv\|^2_{H^2(\rO)} + \|\bfq\|^2_{H^1(\rO)} \Big] d\tilde{t} + \delta_1 \Big[\|\bfh(t)\|^2_{H^2(\Gamma)} + \ve^4 \int_0^t \|\bfv\|^2_{H^2(\Gamma)} d\tilde{t} \Big]\,. \nonumber
\end{align}

\subsection{The estimate for $\bfv \in L^2(0,\rT;H^{1.5}(\Gamma))$}\label{sec:v_ve_indep_est}
%
Let $\{\chi_m\}_{m=1}^K$ be cut-off functions supported near the boundary $\Gamma$ so that there exists $\theta_m: B(0,r_m) \to \supp(\chi_m)$ satisfying
\begin{enumerate}
\item $\theta_m(B_+(0,r_m)) = \supp(\chi_m) \cap \rO$\,, where $B_+(0,r_m) \equiv B(0,r_m)\cap \{y_2 > 0\}$,
\item $\theta_m(B(0,r_m)\cap \{y_2 = 0\}) = \supp(\chi_m) \cap \Gamma$\,;
\item $\det(\nabla \theta_m) = 1$.
\end{enumerate}
For a fixed $m$, define $\wfa = (\nabla \theta_m)^{-1}$, $\widetilde{\chi} = \chi_m\circ\theta_m$ in $\supp(\widetilde{\chi}_m)$, $(\widetilde{\bfw}, \widetilde{q}) = (\bfw, \bfq)\circ \theta_m$ in $B_+(0,r_m)$. Our goal is to use
\begin{equation}\label{test_function}
\varphi = \big[\Lambda_\eps (\widetilde{\chi}^2 \Lambda_\eps \wfv_{,1})_{,1}\big] \circ \theta_m^{-1}\,,
\end{equation}
as a test function in (\ref{weak_v}) (which is legitimate since $\varphi \in L^2(0,\rT;H^1(\rO))$, and then pass $\eps \to 0$.

\subsubsection{The estimate of $(\bfJ \bfv_t, \varphi)_{L^2(\rO)}$}
Let $\wfJ = \bfJ \circ \theta_m$. Then
\begin{align*}
& (\bfJ \bfv_t, \varphi)_{L^2(\rO)} = \big(\wfJ \wfv_t,\Lambda_\eps (\widetilde{\chi}^2 (\Lambda_\eps \wfv_{,1})_{,1}\big)_{L^2(B_+(0,r_m))} \\
&\qquad = \big(\Lambda_\eps (\wfJ \wfv_t), (\widetilde{\chi}^2 \Lambda_\eps \wfv_{,1})_{,1}\big)_{L^2(B_+(0,r_m))} \\
&\qquad = \big(\wfJ \Lambda_\eps \wfv_t, (\widetilde{\chi}^2 \Lambda_\eps \wfv_{,1})_{,1}\big)_{L^2(B_+(0,r_m))} + \big(\comm{\Lambda_\eps}{\wfJ} \wfv_t, (\widetilde{\chi}^2 \Lambda_\eps \wfv_{,1})_{,1}\big)_{L^2(B_+(0,r_m))}\\
&\qquad = - \big(\wfJ (\widetilde{\chi} \Lambda_\eps \wfv_{,1})_t, \widetilde{\chi} \Lambda_\eps \wfv_{,1}\big)_{L^2(B_+(0,r_m))} - \big(\wfJ_{,1} \Lambda_\eps \wfv_t, \widetilde{\chi}^2 \Lambda_\eps \wfv_{,1}\big)_{L^2(B_+(0,r_m))} \\
&\qquad\quad + \big(\comm{\Lambda_\eps}{\wfJ} \wfv_t, (\widetilde{\chi}^2 \Lambda_\eps \wfv_{,1})_{,1}\big)_{L^2(B_+(0,r_m))} \,.
\end{align*}
By (\ref{comm_est_temp1}) and the properties of convolution,
\begin{align*}
& \big(\comm{\Lambda_\eps}{\wfJ} \wfv_t, (\widetilde{\chi}^2 \Lambda_\eps \wfv_{,1})_{,1}\big)_{L^2(B_+(0,r_m))} \\
&\qquad \ge - C \eps \|\nabla \bfJ\|_{L^\infty(\rO)} \|\bfv_t\|_{L^2(\rO)} \|\bfv\|_{H^2(\rO)} \ge - \frac{C \eps}{\ve} \|\bfv_t\|_{L^2(\rO)} \|\bfv\|_{H^2(\rO)}\,;
\end{align*}
thus
\begin{align*}
& \big(\Lambda_\eps (\wfJ \wfv_{t,1}), \widetilde{\chi}^2 \Lambda_\eps \wfv_{,1}\big)_{L^2(B_+(0,r_m))} \ge - \frac{1}{2} \frac{d}{dt} \|\sqrt{\wfJ} \widetilde{\chi} \Lambda_\eps \wfv_{,1}\|^2_{L^2(B_+(0,r_m))} \\
&\qquad -\hspace{-1pt} \frac{1}{2} \big(\wfJ_t \widetilde{\chi} \Lambda_\eps \wfv_{,1}, \widetilde{\chi} \Lambda_\eps \wfv_{,1}\big)_{L^2(B_+(0,r_m))} \hspace{-2pt}-\hspace{-1pt} \frac{C \eps}{\ve} \|\bfv_t\|_{L^2(\rO)} \|\bfv\|_{H^2(\rO)} \\
&\quad \ge - \frac{1}{2} \frac{d}{dt} \|\sqrt{\wfJ} \widetilde{\chi} \Lambda_\eps \wfv_{,1}\|^2_{L^2(B_+(0,r_m))} \hspace{-2pt}-\hspace{-1pt} C \|\bfJ_t\|_{L^2(\rO)} \|\nabla \bfv\|^2_{L^4(\rO)} \hspace{-2pt}-\hspace{-1pt} \frac{C \eps}{\ve} \|\bfv_t\|_{L^2(\rO)} \|\bfv\|_{H^2(\rO)} \,.
\end{align*}
Therefore, since $\Lambda_\eps \bfv \to \bfv$ in $C([0,\rT];H^1(\rO))$, by interpolation we conclude that
\begin{equation}\label{vt_term_est}
\begin{array}{l}
\displaystyle{} \lim_{\eps\to 0} \int_0^t \big(\Lambda_\eps (\wfJ \wfv_{t,1}), \widetilde{\chi}^2 \Lambda_\eps \wfv_{,1}\big)_{L^2(B_+(0,r_m))} d\tilde{t} \ge - \frac{1}{2} \|\sqrt{\wfJ} \widetilde{\chi} \wfv_{,1}\|^2_{L^2(B_+(0,r_m))} \vspace{.1cm}\\
\displaystyle{}\qquad - C \|u_0\|^2_{H^1(\rO)} - C_\delta \int_0^t \|\bfv\|^4_{H^1(\rO)} d\tilde{t} - \delta \int_0^t \|\bfv\|^2_{H^2(\rO)} d\tilde{t}\,.
\end{array}
\end{equation}

\subsubsection{The estimate of \smallexp{$\displaystyle{}\int_\rO$}$\bfJ (\bfA^j_\ell \bfv^i_{,j} + \bfA^j_i \bfv^\ell_{,j}) \bfA^k_\ell \varphi^i_{,k} dx$}
Let $\wfA = \bfA \circ \theta_m$. Then
\begin{align*}
& \int_\rO \hspace{-2pt}\bfJ (\bfA^j_\ell \bfv^i_{,j} \hspace{-2pt}+\hspace{-1pt} \bfA^j_i \bfv^\ell_{,j}) \bfA^k_\ell \varphi^i_{,k} dx = \int_{B_+(0,r_m)} \hspace{-2pt}\wfJ \big(\wfA^j_\ell \wfa^r_j \wfv^i_{,r} \hspace{-2pt}+\hspace{-1pt} \wfA^j_i \wfa^r_j \wfv^\ell_{,r}\big) \wfA^k_\ell \wfa^s_k \Lambda_\eps (\widetilde{\chi}^2 \Lambda_\eps \wfv^i_{,1})_{,1s} dy \\
&\qquad\quad = - \int_{B_+(0,r_m)} \hspace{-2pt}\wfJ \big(\wfA^j_\ell \wfa^r_j \wfv^i_{,1r} \hspace{-2pt}+\hspace{-1pt} \wfA^j_i \wfa^r_j \wfv^\ell_{,1r}\big) \wfA^k_\ell \wfa^s_k \Lambda_\eps (\widetilde{\chi}^2 \Lambda_\eps \wfv^i_{,1s})\, dy + \R_1
\end{align*}
with some error term $\R_1$ satisfying
$$
\R_1 \ge - C \Big[\|\nabla^2 \Psi\|_{L^{2^+}(\rO)} \|\nabla \bfv\|_{L^{\infty^-}(\rO)} + \|\nabla \bfv\|_{L^2(\rO)} \Big] \|\nabla \bfv\|_{H^1(\rO)} \,,
$$
where $2^+$ is a number close to but greater than $2$ so that the Sobolev embedding $H^{0.2}(\rO) \contsubset L^{2^+}(\rO)$ holds, and $\infty^-$ is the corresponding \Holder's conjugate satisfying \smallexp{$\displaystyle{}\frac{1}{2^+} + \frac{1}{\infty^-} = \frac{1}{2}$}. Using the notation of commutators,
\begin{align*}
& \int_{B_+(0,r_m)} \hspace{-2pt}\wfJ \big(\wfA^j_\ell \wfa^r_j \wfv^i_{,1r} \hspace{-2pt}+\hspace{-1pt} \wfA^j_i \wfa^r_j \wfv^\ell_{,1r}\big) \wfA^k_\ell \wfa^s_k \Lambda_\eps (\widetilde{\chi}^2 \Lambda_\eps \wfv^i_{,1s})\, dy \\
&\quad = \int_{B_+(0,r_m)} \hspace{-2pt}\wfJ \big[\wfA^j_\ell \wfa^r_j \Lambda_\eps (\widetilde{\chi} \wfv^i_{,1r}) \hspace{-2pt}+\hspace{-1pt} \wfA^j_i \wfa^r_j (\widetilde{\chi} \Lambda_\eps \wfv^\ell_{,1r}) \big] \wfA^k_\ell \wfa^s_k  (\widetilde{\chi} \Lambda_\eps \wfv^i_{,1s})\, dy \\
&\qquad + \int_{B_+(0,r_m)} \hspace{-2pt} \Big[\big(\comm{\Lambda_\eps}{\wfJ \wfA^j_\ell \wfA^k_\ell \wfa^r_j \wfa^s_k} \wfv^\ell_{,1r}\big) + \big(\comm{\Lambda_\eps}{\wfJ \wfA^j_i \wfA^k_\ell \wfa^r_j \wfa^s_k} \wfv^\ell_{,1r}\big) \Big] \widetilde{\chi}^2 \Lambda_\eps \wfv^i_{,1s} dy\,;
\end{align*}
thus by (\ref{comm_est_temp1}),
\begin{align*}
& \int_{B_+(0,r_m)} \hspace{-2pt}\wfJ \wfa^r_j \big(\wfA^j_\ell \wfv^i_{1,r} \hspace{-2pt}+\hspace{-1pt} \wfA^j_i \wfv^\ell_{,1r}\big) \wfA^k_\ell \wfa^s_k \Lambda_\eps (\widetilde{\chi}^2 \Lambda_\eps \wfv^i_{,1s})\, dy \\
&\quad \le \int_{B_+(0,r_m)} \hspace{-2pt}\wfJ \big[\wfA^j_\ell \wfa^r_j \Lambda_\eps (\widetilde{\chi} \wfv^i_{,1r}) \hspace{-2pt}+\hspace{-1pt} \wfA^j_i \wfa^r_j (\widetilde{\chi} \Lambda_\eps \wfv^\ell_{,1r}) \big] \wfA^k_\ell \wfa^s_k  (\widetilde{\chi} \Lambda_\eps \wfv^i_{,1s})\, dy \\
&\qquad + C \eps \|\bfJ \bfA^j_\ell \bfA^k_\ell\|_{W^{1,\infty}(\rO)} \|\nabla^2 \bfv\|^2_{L^2(\rO)} \\
&\quad \le \int_{B_+(0,r_m)} \hspace{-2pt}\wfJ \big[\wfA^j_\ell \wfa^r_j \Lambda_\eps (\widetilde{\chi} \wfv^i_{,1r}) \hspace{-2pt}+\hspace{-1pt} \wfA^j_i \wfa^r_j (\widetilde{\chi} \Lambda_\eps \wfv^\ell_{,1r}) \big] \wfA^k_\ell \wfa^s_k  (\widetilde{\chi} \Lambda_\eps \wfv^i_{,1s})\, dy \hspace{-1pt}+\hspace{-1.5pt} \frac{C \eps}{\ve} \|\nabla^2 \bfv\|^2_{L^2(\rO)}.
\end{align*}
Therefore, since $\Lambda_\eps (\widetilde{\chi} \wfv^i_{,1r}) \to \widetilde{\chi} \wfv^i_{,1r}$ in $L^2(0,\rT;L^2(B_+(0,r_m)))$, by interpolation and Young's inequality we conclude that
\begin{align}
& \lim_{\eps\to 0} \int_0^t \int_\rO \hspace{-2pt}\bfJ (\bfA^j_\ell \bfv^i_{,j} \hspace{-2pt}+\hspace{-1pt} \bfA^j_i \bfv^\ell_{,j}) \bfA^k_\ell \varphi^i_{,k} dx d\tilde{t} \nonumber\\
&\quad \ge \int_0^t \hspace{-2pt}\int_{B_+(0,r_m)} \hspace{-2pt}\wfJ \big[\wfA^j_\ell \wfa^r_j \Lambda_\eps (\widetilde{\chi} \wfv^i_{,1r}) \hspace{-2pt}+\hspace{-1pt} \wfA^j_i \wfa^r_j (\widetilde{\chi} \Lambda_\eps \wfv^\ell_{,1r}) \big] \wfA^k_\ell \wfa^s_k  (\widetilde{\chi} \Lambda_\eps \wfv^i_{,1s})\, dy d\tilde{t} \nonumber\\
&\qquad - C \int_0^t \Big[\|\nabla^2 \Psi\|_{L^{2^+}(\rO)} \|\nabla \bfv\|_{L^{\infty^-}(\rO)} + \|\nabla \bfv\|_{L^2(\rO)} \Big] \|\nabla \bfv\|_{H^1(\rO)} d\tilde{t} \nonumber\\
&\quad \ge \frac{1}{2} \int_0^t \hspace{-2pt}\big\|\Def (\widetilde{\chi} \wfv_{,1})\big\|^2_{B_+(0,r_m)} d\tilde{t} \hspace{-1pt}-\hspace{-1pt} C_\delta \int_0^t \hspace{-2pt}\|\bfv\|^2_{H^1(\rO)} d\tilde{t} \hspace{-1pt}-\hspace{-1pt} (C \varsigma \hspace{-1pt}+\hspace{-1pt} \delta) \int_0^t \hspace{-2pt}\|\bfv\|^2_{H^2(\rO)} d\tilde{t} \,. \label{laplacian_term_est}
\end{align}

\subsubsection{The estimate of $(\bfq, \bfJ \bfA^j_i \varphi^i_{,j})_{L^2(\rO)}$}
Let $\widetilde{\bfq} = \bfq \circ \theta_m$. Making a change of variable and integrating by parts in $y_1$, we obtain that
\begin{align*}
& (\bfq, \bfJ \bfA^j_i \varphi^i_{,j})_{L^2(\rO)} = \big(\widetilde{\bfq}, \wfJ \wfA^j_i \wfa_j^\ell \Lambda_\eps (\widetilde{\chi}^2 \Lambda_\eps \wfv^i_{,1})_{,1\ell}\big)_{L^2(B_+(0,r_m))} \\
&\qquad\quad = - \big(\widetilde{\bfq}, \wfJ \wfA^j_i (\wfa_j^\ell)_{,1} \Lambda_\eps (\widetilde{\chi}^2 \Lambda_\eps \wfv^i_{,1})_{,\ell}\big)_{L^2(B_+(0,r_m))} \\
&\qquad\qquad - \big(\widetilde{\bfq}, (\wfJ \wfA^j_i)_{,1} \wfa_j^\ell \Lambda_\eps (\widetilde{\chi}^2 \Lambda_\eps \wfv^i_{,1})_{,\ell}\big)_{L^2(B_+(0,r_m))} \\
&\qquad\qquad - \big(\widetilde{\bfq}_{,1}, \wfJ \wfA^j_i \wfa_j^\ell \Lambda_\eps (\widetilde{\chi}^2 \Lambda_\eps \wfv^i_{,1})_{,\ell}\big)_{L^2(B_+(0,r_m))} \,.
\end{align*}
For the first term, we can integrate by parts in $y_1$ again and obtain that
\begin{align*}
& - \big(\widetilde{\bfq}, \wfJ \wfA^j_i (\wfa_j^\ell)_{,1} \Lambda_\eps (\widetilde{\chi}^2 \Lambda_\eps \wfv^i_{,1})_{,\ell}\big)_{L^2(B_+(0,r_m))} \\
&\qquad \le C \Big[\|\bfq\|_{L^{\infty^-}(\rO)} \|\nabla^2 \Psi\|_{L^{2^+}(\rO)} + \|\bfq\|_{H^1(\rO)} \Big] \|\bfv\|_{H^1(\rO)} \le C \|\bfq\|_{H^1(\rO)} \|\bfv\|_{H^1(\rO)} \,.
\end{align*}
Similarly, it is easy to see that the second term satisfy that
\begin{align*}
& - \big(\widetilde{\bfq}, (\wfJ \wfA^j_i)_{,1} \wfa_j^\ell \Lambda_\eps (\widetilde{\chi}^2 \Lambda_\eps \wfv^i_{,1})_{,\ell}\big)_{L^2(B_+(0,r_m))} \\
&\qquad\quad \le C \|\widetilde{\bfq}\|_{L^{\infty^-}(\rO)} \|\nabla^2 \Psi\|_{L^{2^+}(\rO)} \|\bfv\|_{H^2(\rO)} \le C \varsigma \|\bfq\|_{H^1(\rO)} \|\bfv\|_{H^2(\rO)}\,.
\end{align*}
For the last term, since $0 = (\bfA^j_i \bfv^i_{,j})\circ \theta_m = \wfA^j_i \wfa^\ell_j \wfv^i_{,\ell}$, we find that
$$
\wfJ \wfA^j_i \wfa^\ell_j \wfv^i_{,1\ell} = \big(\wfJ \wfA^j_i \wfa^\ell_j \wfv^i_{,\ell}\big)_{,1} - \big(\wfJ \wfA^j_i \wfa^\ell_j)_{,1} \wfv^i_{,\ell} = - \big(\wfJ \wfA^j_i \wfa^\ell_j)_{,1} \wfv^i_{,\ell} \,;
$$
thus by the convergence of $\Lambda_\eps \big(\widetilde{\chi}^2 \Lambda_\eps \wfv^i_{,1}\big)_{,\ell} \to \big(\widetilde{\chi}^2 \wfv^i_{,1}\big)_{,\ell}$ in $L^2(0,\rT;L^2(B_+(0,r_m)))$,
\begin{align*}
& - \lim_{\eps\to 0} \int_0^t \big(\widetilde{\bfq}_{,1}, \wfJ \wfA^j_i \wfa_j^\ell \Lambda_\eps (\widetilde{\chi}^2 \Lambda_\eps \wfv^i_{,1})_{,\ell}\big)_{L^2(B_+(0,r_m))} d\tilde{t} \\
&\qquad = - \int_0^t \big(\widetilde{\bfq}_{,1}, \wfJ \wfA^j_i \wfa_j^\ell (\widetilde{\chi}^2 \wfv^i_{,1})_{,\ell}\big)_{L^2(B_+(0,r_m))} d\tilde{t} \\
&\qquad \le C \Big[\|\bfq\|_{L^{\infty^-}(\rO)} \|\nabla^2 \Psi\|_{L^{2^+}(\rO)} + \|q\|_{H^1(\rO)} \Big] \|\bfv\|_{H^1(\rO)} \le C \|\bfq\|_{H^1(\rO)} \|\bfv\|_{H^1(\rO)}\,.
\end{align*}
As a consequence, by Young's inequality we conclude that
\begin{equation}\label{pressure_term_est}
\begin{array}{l}
\displaystyle{} \lim_{\eps\to 0} \int_0^t (\bfq, \bfJ \bfA^j_i \varphi^i_{,j})_{L^2(\rO)} dt \vspace{.1cm}\\
\displaystyle{}\qquad\qquad \le C_\delta \int_0^t \|\bfv\|^2_{H^1(\rO)} d\tilde{t} + (C \varsigma + \delta) \int_0^t \Big[\|\bfv\|^2_{H^2(\rO)} + \|\bfq\|^2_{H^1(\rO)}\Big] d\tilde{t}\,.
\end{array}
\end{equation}

\subsubsection{The estimate of \smallexp{$\displaystyle{} \big(\bfJ(\tilde{t}) \bfA^\ell_j(\tilde{t}) (\Psi^j_t(\tilde{t}) - \bfv^j(\tilde{t})) \bfv^i_{,\ell}(\tilde{t}), \varphi^i \big)_{L^2(\rO)}$}}
Since
$$
\|\Psi_t\|_{L^6(\rO)} \le C \|\bfh_t\|_{H^{1/6}(\Gamma)} \le C \|\bfv\|_{H^{1/6}(\Gamma)} \le C \|\bfv\|_{H^{2/3}(\rO)},
$$
by interpolation and Young's inequality it is easy to see that
\begin{align}
& \big(\bfJ(\tilde{t}) \bfA^\ell_j(\tilde{t}) (\Psi^j_t(\tilde{t}) - \bfv^j(\tilde{t})) \bfv^i_{,\ell}(\tilde{t}), \varphi^i \big)_{L^2(\rO)} \nonumber\\
&\qquad \le C \big(\|\Psi_t\|_{L^6(\rO)} + \|\bfv\|_{L^6(\rO)}\big) \|\bfv\|_{W^{1,3}(\rO)} \|\bfv\|_{H^2(\rO)} \nonumber \\
&\qquad \le C_\delta \|\bfv\|^2_{L^2(\rO)} \|\bfv\|^4_{H^1(\rO)} + \delta \|\bfv\|^2_{H^2(\rO)} \,. \label{convection_term_est}
\end{align}

\subsubsection{The estimate of \smallexp{$\displaystyle{}\int_\Gamma \L_\ve(\bfh) (\bfJ \bfA^j_i \varphi^i \rN_j) dS $}}\label{sec:boundary_term}
Let $\bfg = (1 \hspace{-1pt}+\hspace{-1pt} \rb_0 \bfh_{\ve\ve})^2 + \bfh^{\prime\hspace{1pt}2}_{\ve\ve}$. 
We note that since $\bfh$ satisfies (\ref{smallness_of_bfh}),
\begin{align*}
\|\bfg(t) - 1\|_{H^{0.7}(\Gamma)} \le C \varsigma \qquad\Forall t\in [0,\rT_{\rM_1}]\,.
\end{align*}
By writting
$
\L_\ve(\bfh) = \smallexp{$\displaystyle{}\frac{\bfh''}{1 \hspace{-1pt}+\hspace{-1pt} \rb_0 \bfh_{\ve\ve}}$} + \R\,,
$
where by (\ref{duality_ineq}) $\R \equiv \L_\ve(\bfh) - \smallexp{$\displaystyle{}\frac{\bfh''}{1 \hspace{-1pt}+\hspace{-1pt} \rb_0 \bfh_{\ve\ve}}$}$ satisfies
\begin{align*}
& \|\R\|_{H^{0.5}(\Gamma)} \le C \Big[\Big\|\smallexp{$\displaystyle{}\frac{(1 \hspace{-1pt}+\hspace{-1pt} \rb_0 \bfh)^2}{\bfg^{3/2}}$} - 1\Big\|_{H^{0.625}(\Gamma)} \|\bfh\|_{H^{2.5}(\Gamma)} + 1 \Big] \le C \Big[\sqrt{\varsigma}\, \|\bfh\|_{H^{2.5}(\Gamma)} + 1\Big]\,,
\end{align*}
we find that
\begin{align*}
& \int_\Gamma \L_\ve(\bfh) (\bfJ \bfA^j_i \varphi^i \rN_j)\, dS \\
&\qquad = \int_\Gamma \bfh'' \frac{\bfJ \bfA^\rT \rN}{1 \hspace{-1pt}+\hspace{-1pt} \rb_0 \bfh_{\ve\ve}}\, \varphi^i dS + \int_\Gamma \Big[\L_\ve(\bfh) - \frac{\bfh''}{1 \hspace{-1pt}+\hspace{-1pt} \rb_0 \bfh_{\ve\ve}}\Big] (\bfJ \bfA^\rT \rN)\cdot \varphi dS \\
&\qquad \ge \int_\Gamma \bfh'' \frac{\bfJ \bfA^\rT \rN}{1 \hspace{-1pt}+\hspace{-1pt} \rb_0 \bfh_{\ve\ve}}\, \varphi^i dS - C \Big[\sqrt{\varsigma}\, \|\bfh\|_{H^{2.5}(\Gamma)} + 1 \Big] \|\bfv\|_{H^{1.5}(\Gamma)} \,.
\end{align*}
On the other hand, since $\varphi \to (\chi^2 \wfv_{,1})_{,1}$ in $L^2(0,\rT;L^2(B(0,r_m)\cap \{y_2 = 0\}))$,
\begin{align*}
& \lim_{\eps \to 0} \int_0^t \int_\Gamma \frac{\bfh''}{1 \hspace{-1pt}+\hspace{-1pt} \rb_0\bfh_{\ve\ve}} (\bfJ \bfA^j_i \varphi^i \rN_j)\, dS d\tilde{t} \\
&\quad \ge \int_0^t \int_\Gamma \widetilde{\chi}^2 \bfh'' \Big[\bfh''_t \hspace{-1pt}-\hspace{-1pt} \Big(\frac{- \bfh'_{\ve\ve} (\rX'\circ\rX^{-1})}{1 \hspace{-1pt}+\hspace{-1pt} \rb_0 \bfh_{\ve\ve}} \hspace{-1pt}+\hspace{-1pt} \rN \Big)'' \bfv \hspace{-1pt}-\hspace{-1pt} 2 \Big(\frac{- \bfh'_{\ve\ve} (\rX'\circ\rX^{-1})}{1 \hspace{-1pt}+\hspace{-1pt} \rb_0 \bfh_{\ve\ve}} \hspace{-1pt}+\hspace{-1pt} \rN \Big)' \bfv^\pprime \Big] dS d\tilde{t} \\
&\qquad - C \int_0^t \|\bfh''\|_{L^2(\Gamma)} \|\bfv^\pprime\|_{L^2(\Gamma)} d\tilde{t} \\
&\quad \ge \frac{1}{2} \big[\|\widetilde{\chi} \bfh''\|^2_{L^2(\Gamma)} \hspace{-1pt}-\hspace{-1pt} \|h_0\|^2_{H^2(\Gamma)}\big] \hspace{-1pt}-\hspace{-1pt} \int_0^t \int_\Gamma \frac{(\rX^\pprime \hspace{-2pt}\circ\hspace{-1pt} \rX^{-1})\cdot \bfv}{1 \hspace{-1pt}+\hspace{-1pt} \rb_0 \bfh_{\ve\ve}} \bfh'''_{\ve\ve} \bfh'' dS d\tilde{t} \\
&\qquad \hspace{-1pt}-\hspace{-1pt} C \int_0^t \big(\|\bfh''\|^2_{L^4(\Gamma)} \hspace{-1pt}+\hspace{-1pt} \|\bfh''\|_{L^2(\Gamma)} \big) \|\bfv\|_{H^1(\Gamma)} d\tilde{t} \,.
\end{align*}
By commutator estimate (\ref{comm_est_temp1}),
\begin{align*}
& \int_\Gamma \frac{(\rX^\pprime \hspace{-2pt}\circ\hspace{-1pt} \rX^{-1})\cdot \bfv}{1 \hspace{-1pt}+\hspace{-1pt} \rb_0 \bfh_{\ve\ve}} \bfh'''_{\ve\ve} \bfh'' dS \\
&\qquad = - \int_\Gamma \Big(\bigcomm{\eta_\ve\star}{\frac{(\rX^\pprime \hspace{-2pt}\circ\hspace{-1pt} \rX^{-1})\cdot \bfv}{1 \hspace{-1pt}+\hspace{-1pt} \rb_0 \bfh_{\ve\ve}}} \bfh'''_\ve \Big) \bfh'' dS - \int_\Gamma \frac{(\rX^\pprime \hspace{-2pt}\circ\hspace{-1pt} \rX^{-1})\cdot \bfv}{1 \hspace{-1pt}+\hspace{-1pt} \rb_0 \bfh_{\ve\ve}} \bfh'''_\ve \bfh''_\ve\, dS \\
&\qquad \le \frac{1}{2} \int_\Gamma \Big[\frac{(\rX^\pprime \hspace{-2pt}\circ\hspace{-1pt} \rX^{-1})\cdot \bfv}{1 \hspace{-1pt}+\hspace{-1pt} \rb_0 \bfh_{\ve\ve}}\Big]' |\bfh''_\ve|^2 dS + C \ve \|\bfv\|_{W^{1,\infty}(\Gamma)} \|\bfh'''_\ve\|_{L^2(\Gamma)} \|\bfh''\|_{L^2(\Gamma)}\,;
\end{align*}
thus by interpolation and Young's inequality,
\begin{align*}
& \int_\Gamma \hspace{-2pt}\frac{(\rX^\pprime \hspace{-2pt}\circ\hspace{-1pt} \rX^{-1})\cdot \bfv}{1 \hspace{-1pt}+\hspace{-1pt} \rb_0 \bfh_{\ve\ve}} \bfh'''_{\ve\ve} \bfh'' dS \\
&\quad \le C \|\bfv\|_{H^1(\Gamma)} \|\bfh''\|^2_{L^4(\Gamma)} + C \sqrt{\ve} \|\bfv\|^{1/2}_{H^1(\Gamma)} \|\bfv\|^{1/2}_{H^2(\Gamma)} \|\bfh\|_{H^{2.5}(\Gamma)} \|\bfh\|_{H^2(\Gamma)} \\
&\quad \le C \|\bfv\|_{H^1(\Gamma)} \|\bfh\|_{H^2(\Gamma)} \|\bfh\|_{H^{2.5}(\Gamma)} + C_{\delta_1} \|\bfv\|^{2/3}_{H^1(\Gamma)} \|\bfh\|^{4/3}_{H^{2.5}(\Gamma)} \|\bfh\|^{4/3}_{H^2(\Gamma)} + \delta_1 \ve^2 \|\bfv\|^2_{H^2(\Gamma)} \\
&\quad \le C_\delta \Big[\|\bfh\|^2_{H^2(\Gamma)} + \|\bfh\|^4_{H^2(\Gamma)} \Big] \|\bfv\|^2_{H^1(\Gamma)} + \delta \Big[\|\bfh\|^2_{H^{2.5}(\Gamma)} + \ve^2 \|\bfv\|^2_{H^2(\Gamma)} \Big] \\
&\quad \le C_{\delta,\delta_2} \big(1+\|\bfh\|^8_{H^2(\Gamma)}\big) \|\bfv\|^2_{H^1(\rO)} + \delta \Big[\|\bfv\|^2_{H^2(\rO)} + \|\bfh\|^2_{H^{2.5}(\Gamma)}\Big] + \delta_2 \ve^2 \|\bfv\|^2_{H^2(\Gamma)} \,.
\end{align*}
Therefore, we obtain that
\begin{align}
& \lim_{\eps \to 0} \int_0^t \int_\Gamma \L_\ve(\bfh) (\bfJ \bfA^j_i \varphi^i)\, dS d\tilde{t} \nonumber\\
&\quad \ge \frac{1}{2} \|\widetilde{\chi} \bfh''(t)\|^2_{L^2(\Gamma)} - C \|h_0\|^2_{H^2(\Gamma)} - C_{\delta,\delta_2} \int_0^t \big(1+\|\bfh\|^8_{H^2(\Gamma)}\big) \|\bfv\|^2_{H^1(\rO)} d\tilde{t} \label{boundary_term_est}\\
&\qquad - (C \sqrt{\varsigma} + \delta) \int_0^t \Big[\|\bfv\|^2_{H^2(\rO)} + \|\bfh\|^2_{H^{2.5}(\Gamma)} \Big] d\tilde{t} - \delta_2 \ve^2 \int_0^t \|\bfv\|^2_{H^2(\Gamma)} d\tilde{t} \nonumber\,.
\end{align}

\subsubsection{The estimate of $\ve^2 \big(\Delta_0 (\bfJ \bfA^j_r \bfv^r), \bfJ \bfA^j_s \varphi^s\big)_{L^2(\Gamma)}$}
Let $|\theta_{m,1}|^2 = \widetilde{g}$. Then
\begin{align*}
\Delta_0 (\bfJ \bfA^j_r \bfv^r) \circ \theta_m = \frac{1}{\sqrt{\widetilde{g}}} \Big(\frac{1}{\sqrt{\widetilde{g}}} (\wfJ \wfA^j_r \wfv^r)_{,1}\Big)_{,1} \qquad\text{on}\quad B(0,r_m)\cap \{y_2 = 0\}\,.
\end{align*}
Therefore, by $\varphi \to (\widetilde{\chi}^2 \wfv_{,1})_{,1}$ in $L^2(0,\rT;L^2(\Gamma))$,
\begin{align*}
& \lim_{\eps\to 0} \ve^2 \int_0^t \big(\Delta_0 (\bfJ \bfA^j_r \bfv^r), \bfJ \bfA^j_s \varphi^s \big)_{L^2(\Gamma)} d\tilde{t} \\
&\qquad = \ve^2 \int_0^t \int_{B(0,r_m)\cap \{y_2=0\}} \Big(\frac{1}{\sqrt{\widetilde{g}}} (\wfJ \wfA^j_r \wfv^r)_{,1}\Big)_{,1} \wfJ \wfA^j_s (\widetilde{\chi}^2 \wfv^s_{,1})_{,1} dy_1 d\tilde{t} \\
&\qquad \ge \ve^2 \int_0^t \int_{B(0,r_m)\cap \{y_2=0\}} \frac{ \wfJ^2 \wfA^j_r \wfA^j_s}{\sqrt{\widetilde{g}}}\, (\widetilde{\chi} \wfv^r_{,11}) (\widetilde{\chi} \wfv^s_{,11})\, dy_1 d\tilde{t} \\
&\qquad\quad - C \ve^2 \int_0^t \Big[\|\bfJ \bfA\|_{H^2(\Gamma)} \|\bfv\|_{L^\infty(\Gamma)} + \|\bfJ \bfA\|_{W^{1,\infty}(\Gamma)} \|\bfv\|_{H^1(\Gamma)} \Big] \|\bfv\|_{H^2(\Gamma)} d\tilde{t}\,.
\end{align*}
By interpolation and the properties of convolution,
\begin{align*}
\|\bfJ \bfA\|_{H^2(\Gamma)} \le \frac{C}{\ve} \|\bfh\|_{H^2(\Gamma)},\quad
\|\bfJ \bfA\|_{W^{1,\infty}(\Gamma)} \le C \|\bfJ \bfA\|^{1/2}_{H^1(\Gamma)} \|\bfJ \bfA\|^{1/2}_{H^2(\Gamma)} \le \frac{C}{\sqrt{\ve}} \|\bfh\|_{H^2(\Gamma)};
\end{align*}
thus by interpolation, the trace estimate, and Young's inequality we conclude that
\begin{align}
& \lim_{\eps\to 0} \ve^2 \int_0^t \big(\Delta_0 (\bfJ \bfA^j_r \bfv^r), \bfJ \bfA^j_s \varphi^s \big)_{L^2(\Gamma)} d\tilde{t} \nonumber\\
&\quad \ge \ve^2 \int_0^t \int_{B(0,r_m)\cap \{y_2=0\}} \frac{1}{\sqrt{\widetilde{g}}}\, |\widetilde{\chi} \wfJ \wfA^j_r \wfv^r_{,11}|^2 \, dy_1 d\tilde{t} \nonumber\\
&\qquad - C \ve \int_0^t \|\bfh\|_{H^2(\Gamma)} \|\bfv\|^{6/7}_{H^{0.75}(\rO)} \|\bfv\|^{8/7}_{H^2(\Gamma)} d\tilde{t} - C \ve^{3/2} \int_0^t \|\bfh\|_{H^2(\Gamma)} \|\bfv\|^{2/3}_{H^1(\rO)} \|\bfv\|^{4/3}_{H^2(\Gamma)} d\tilde{t} \nonumber\\
&\quad \ge \ve^2 (1 - C \varsigma) \int_0^t \big\|\widetilde{g}^{-1/4} \widetilde{\chi} \wfv_{,11}\big\|^2_{L^2(B(0,r_m)\cap \{y_2=0\})} d\tilde{t} - \delta \int_0^t \|\bfv\|^2_{H^2(\rO)} d\tilde{t} \label{boundary_term_est1} \\
&\qquad - C_{\delta,\delta_2} \int_0^t \big(1+\|\bfh\|^3_{H^2(\Gamma)}\big) \|\bfv\|^2_{H^1(\rO)} d\tilde{t} - \delta_2 \ve^2 \int_0^t \|\bfv\|^2_{H^2(\Gamma)} d\tilde{t} \,. \nonumber
\end{align}

\subsubsection{The combination of {\rm(\ref{vt_term_est})}-{\rm(\ref{boundary_term_est1})}}
Summing (\ref{vt_term_est})
-(\ref{boundary_term_est1}) over $m=1,\cdots,K$, by the Korn's inequality, the trace estimate, estimate (\ref{hH25_estimate}) and choosing $\delta_2 > 0$ small enough we find that
\begin{align}
& \|\sqrt{\wfJ} \widetilde{\chi} \wfv_{,1}\|^2_{L^2(B_+(0,r_m))} \hspace{-1pt}+\hspace{-1pt} \|\bfh(t)\|^2_{H^2(\Gamma)} \hspace{-1pt}+\hspace{-1pt} \int_0^t \Big[\|\bfv\|^2_{H^{1.5}(\Gamma)} \hspace{-1pt}+\hspace{-1pt} \ve^2 \|\bfv\|^2_{H^2(\Gamma)} \Big] d\tilde{t} \nonumber\\
&\quad \le C \big(1 \hspace{-1pt}+\hspace{-1pt} \|u_0\|^2_{H^1(\rO)} \hspace{-1pt}+\hspace{-1pt} \|h_0\|^2_{H^2(\Gamma)}\big) \hspace{-1pt}+\hspace{-1pt} (C \sqrt{\varsigma} \hspace{-1pt}+\hspace{-1pt} \delta) \int_0^t \Big[\|\bfv\|^2_{H^2(\rO)} \hspace{-1pt}+\hspace{-1pt} \|\bfq\|^2_{H^1(\rO)} \Big] d\tilde{t} \label{bfv_trace_estimate_temp}\\
&\qquad \hspace{-1pt}+\hspace{-1pt} C_\delta \int_0^t \big(1+ \|\bfv\|^2_{H^1(\rO)} \hspace{-1pt}+\hspace{-1pt} \|\bfh\|^8_{H^2(\Gamma)}\big) \|\bfv\|^2_{H^1(\rO)} d\tilde{t} \,. \nonumber
\end{align}

\subsection{The implication of the Stokes regularity}
First, we combine (\ref{bfvt_L2L2_est}) and (\ref{bfv_trace_estimate_temp}) as well as choose $\delta_1 > 0$ small enough to conclude that
\begin{align}
& \|\bfv(t)\|^2_{H^1(\rO)} + \|\bfh(t)\|^2_{H^2(\Gamma)} + \int_0^t \Big[\|\bfv_t\|^2_{L^2(\rO)} + \|\bfv\|^2_{H^{1.5}(\Gamma)} + \ve^2 \|\bfv\|^2_{H^2(\Gamma)} \Big] d\tilde{t} \nonumber\\
&\qquad \le C_\delta \big[1 + \|u_0\|^2_{H^1(\rO)} + \|h_0\|^2_{H^2(\Gamma)} \big] + \delta \int_0^t \big[\|\bfv\|^2_{H^2(\rO)} + \|\bfq\|^2_{H^1(\rO)} \big] d\tilde{t} \label{bfv_trace_est}\\
&\qquad\quad + C_\delta \int_0^t \Big[\big(\|\bfh\|^4_{H^2(\Gamma)} + 1\big) \big(1+\|\bfv\|^2_{H^1(\rO)}\big) + \|\bfv\|^{10}_{H^1(\rO)} \Big] d\tilde{t} \,. \nonumber
\end{align}
Next, we rewrite (\ref{NSreg1}a,b) as
\begin{alignat*}{2}
- \Delta \bfv + \nabla \bfq &= f &&\text{in}\quad\rO\,,\\
\div \bfv &= g \qquad&&\text{in}\quad\rO\,,
\end{alignat*}
where $f$ and $g$ are given by
\begin{align*}
f^i &= \bfA^\ell_j (\Psi^j_t \hspace{-1pt}-\hspace{-1pt} \bfv^j) \bfv^i_{,\ell} - \bfv_t^i + (\delta^k_j - \bfA^k_\ell \bfA^j_\ell) \bfv^i_{,jk} + (\delta^k_\ell \delta^j_i - \bfA^k_\ell \bfA^j_i) \bfv^\ell_{,jk} \\
&\quad - \bfA^k_\ell \big(\bfA^j_{\ell,k} \bfv^i_{,j} + \bfA^j_{i,k} \bfv^\ell_{,j}\big) + (\delta^j_i - \bfA^j_i) \bfq_{,j} \,,\\
g &= (\delta^j_i - \bfJ \bfA^j_i) \bfv^i_{,j}\,.
\end{align*}
Applying the regularity theory for the Stokes equation, by interpolation and Young's inequality we obtain that
\begin{align*}
& \|\bfv\|^2_{H^2(\rO)} + \|\bfq\|^2_{H^1(\rO)} \le C \Big[\|f\|^2_{L^2(\rO)} + \|g\|^2_{H^1(\rO)} + \|\bfv\|^2_{H^{1.5}(\Gamma)}\Big] \\
&\qquad \le C \Big[\|\bfv_t\|^2_{L^2(\rO)} + \|\bfv\|^2_{H^{1.5}(\Gamma)} + \|\nabla \bfA\|^2_{L^4(\rO)} \|\nabla \bfv\|^2_{L^4(\rO)} \Big] \\
&\qquad\quad + C \big(\|\Psi_t\|_{L^4(\rO)} + \|\bfv\|_{L^4(\rO)} \big) \|\nabla \bfv\|_{L^4(\rO)} + C \varsigma \Big[\|\bfv\|^2_{H^2(\rO)} + \|\bfq\|^2_{H^1(\rO)} \Big] \\
&\qquad \le C \Big[\|\bfv_t\|^2_{L^2(\rO)} + \|\bfv\|^2_{H^{1.5}(\Gamma)} \Big] + C_\delta \big(\|\bfv\|^2_{H^1(\rO)} + \|\bfv\|^4_{H^1(\rO)}\big) \\
&\qquad\quad + (C \varsigma + \delta) \Big[\|\bfv\|^2_{H^2(\rO)} + \|\bfq\|^2_{H^1(\rO)} + \|\bfh\|^2_{H^{2.5}(\Gamma)} \Big]\,;
\end{align*}
thus by (\ref{hH25_estimate}) and (\ref{bfv_trace_est}) with $\varsigma \ll 1$ and $\delta>0$ small enough,
\begin{equation}\label{bfvq_H2H1_est}
\begin{array}{l}
\displaystyle{} \int_0^t \Big[\|\bfv\|^2_{H^2(\rO)} + \|\bfq\|^2_{H^1(\rO)}\Big] d\tilde{t} \le C \big[1 + \|u_0\|^2_{H^1(\rO)} + \|h_0\|^2_{H^2(\Gamma)} \big] \vspace{.1cm}\\
\displaystyle{}\qquad\quad + C \int_0^t \Big[\big(\|\bfh\|^4_{H^2(\Gamma)} + 1\big) \big(1+\|\bfv\|^2_{H^1(\rO)}\big) + \|\bfv\|^{10}_{H^1(\rO)} \Big] d\tilde{t}\,.
\end{array}
\end{equation}
Let $E(t)$ be the energy function defined by
$$
E(t) = \sup_{s\in[0,t]} \Big[\|\bfv(s)\|^2_{H^1(\rO)} + \|\bfh(s)\|^2_{H^2(\Gamma)}\Big] + \int_0^t \|\bfv\|^2_{H^2(\rO)} d\tilde{t}
$$
Then $E(t)$ is continuous in $t$, and (\ref{bfv_trace_est}) and (\ref{bfvq_H2H1_est}) together imply that
$$
E(t) \le \rC \big[1 + \|u_0\|^2_{H^1(\rO)} + \|h_0\|^2_{H^2(\Gamma)} \big] + t \P\big(E(t)\big)
$$
for some polynomial $\P$. Therefore, there exists $\rT_* > 0$ such that
\begin{equation}\label{vhq_estimate}
E(t) \le 2 \rC\, \Big[1 + \|u_0\|^2_{H^1(\rO)} + \|h_0\|^2_{H^2(\Gamma)} \Big] \qquad\Forall t\in [0,\rT_*]
\end{equation}
whenever $E(t)$ exists.

\section{The existence of a solution to the problem}\label{sec:time_continuation}
\subsection{The continuation argument}
Let $\rM_1 > 0$ be given by
$$
\rM^2_1 = \max\Big\{2 \rC\, \Big[1 + \|u_0\|^2_{H^1(\rO)} + \|h_0\|^2_{H^2(\Gamma)}\Big], \rM^2\Big\}\,,
$$
and $\rT \equiv \min\{\rT_{\rM_1}, \rT_*\}$, where we recall that $\rT_{\rM_1}$ is chosen (in Proposition \ref{prop:small_h}) so that
$$
\|h(t)\|_{H^{1.7}(\Gamma)} < \varsigma \qquad\Forall t\in [0,\rT_{\rM_1}]\,.
$$
In Section \ref{sec:construction}, we have shown that for any given $(w_0,h_0) \in H^1_\div (\rO)\times H^2(\Gamma)$ with $\|h_0\|_{H^{1.7}(\Gamma)} < \varsigma$, there exists a solution $(w^\ve, q^\ve, h^\ve)$ to equation (\ref{NSreg}) in the time interval $[0,\rT_\ve]$ for some $\rT_\ve$ depending on the smoothing parameter $\ve$.
In principle, $\rT_\ve \to 0$ as $\ve \to 0$. Nevertheless, estimate (\ref{vhq_estimate}) implies that if $\rT_\ve \le \rT$,
$$
w^\ve(\rT_\ve) \in H^1_\div(\rO) \quad\text{and}\quad h^\ve(\rT_\ve) \in H^2(\Gamma)\,;
$$
thus if $\rT_\ve < \rT$, we can use $\big(w^\ve(\rT_\ve), h^\ve(\rT_\ve)\big)$ as a new set of initial data and extend the time interval of existence (using the updated initial data) to $[0, \rT'_\ve]$ for some $\rT'_\ve > \rT_\ve$. Estimate (\ref{vhq_estimate}) still holds in the time interval $[0,\rT_\ve']$; thus if $\rT'_\ve < \rT$, we can keep the process of solving (\ref{NSreg}) using new initial data. This implies that solution $(w^\ve,q^\ve,h^\ve)$ exists in the time interval $[0, \rT]$.

\subsection{The existence of a solution to equation (\ref{NSnew})}
Since $(w^\ve,q^\ve,h^\ve)$ exists in a time interval independent of $\ve$, and satisfies estimate (\ref{vhq_estimate}) with $\ve$-independent upper bound, we can pass $\ve \to 0$ and obtain that
\begin{alignat*}{2}
(w^\ve, w^\ve_t) &\rightharpoonup w &&\text{in}\quad L^2(0,\rT;H^2(\rO))\times L^2(0,\rT;L^2(\rO))\,, \\
q^\ve &\rightharpoonup q &&\text{in}\quad L^2(0,\rT;H^1(\rO))\,, \\
(h^\ve,h^\ve_t) &\rightharpoonup (h,h_t)\qquad &&\text{in}\quad L^2(0,\rT;H^{2.5}(\Gamma))\times L^2(0,\rT;H^{1.5}(\Gamma))
\end{alignat*}
for some $(w,q,h) \in \V(\rT)\times \Q(\rT)\times \H(\rT)$. In particular, there exists $\ve_j$ such that
$$
h^{\ve_j} \to h \qquad\text{in}\quad C([0,\rT];H^{1.75}(\Gamma))
$$
which further implies that
\begin{alignat*}{2}
\psi^{\ve_j} &\to \psi \qquad&&\text{in}\quad C([0,T];H^{2.25}(\rO))\,, \\
\rA^{\ve_j} &\to \rA &&\text{in}\quad C([0,T];H^{1.25}(\rO)) \contsubset C([0,T];C(\rO))\,,
\end{alignat*}
where $\psi^{\ve_j}$ is the ALE map corresponding to $h^{\ve_j}$ and $\rA^{\ve_j} = (\nabla \psi^{\ve_j})^{-1}$. Since $(w^{\ve_j},q^{\ve_j},h^{\ve_j})$ satisfies
\begin{align*}
& \big(w^{\ve_j}_t, \varphi\big)_{L^2(\rO)} + \rB_{\psi^{\ve_j}}(w^{\ve_j},\varphi) + \ve_j \big(w^{\ve_j\prime}, \varphi^\pprime\big)_{L^2(\Gamma)} + \big(q^{\ve_j},\div \varphi)_{L^2(\rO)} \\
&\qquad\quad = (\rF, \varphi)_{L^2(\rO)} + \int_\Gamma \L_{\ve_j}(h^{\ve_j}) (\varphi\cdot \rN)\, dS \qquad\Forall \varphi\in H^1(\rO), \text{ a.e. $t\in [0,\rT]$,}
\end{align*}
integrating the equality above in $t$ over the time interval $(a,b) \subseteq (0,\rT)$ and then passing $j \to \infty$, we conclude that $(w,q,h)$ satisfies
\begin{align*}
& \int_a^b \Big[\big(w_t, \varphi\big)_{L^2(\rO)} + \rB_{\psi}(w,\varphi) + \big(q,\div \varphi)_{L^2(\rO)}\Big] dt \\
&\qquad\quad = \int_a^b (\rF, \varphi)_{L^2(\rO)} dt + \int_a^b \int_\Gamma \L(h)(\varphi\cdot \rN) dS dt \qquad\Forall \varphi\in H^1(\rO)\,,
\end{align*}
and the Lebesgue differentiation theorem further implies that $(w,q,\varphi)$ satisfies
\begin{align*}
& \big(w_t, \varphi\big)_{L^2(\rO)} + \rB_{\psi}(w,\varphi) + \big(q,\div \varphi)_{L^2(\rO)} \\
&\qquad\quad = (\rF, \varphi)_{L^2(\rO)} + \int_\Gamma \L(h)(\varphi\cdot \rN)\, dS \qquad\Forall \varphi\in H^1(\rO), \text{ a.e. $t\in [0,\rT]$.}
\end{align*}
Moreover, since $\big(w^\ve(0),h^\ve(0)\big) = (w_0, h_0)$ for all $\ve>0$, we must have $\big(w(0),h(0)\big) = (w_0, h_0)$. Therefore, we establish the existence of a solution $(w,q,h)$ to equation (\ref{NSnew}) which is the main result of this paper.

\appendix
\newpage
\section{Proof of Theorem \ref{thm:elliptic_regularity}}\label{app:Stokes}
\noindent {\bf Theorem \ref{thm:elliptic_regularity}.}
{\sl Let $a^{jk}_{rs}$ be a $(2,2)$-tensor such that $a^{jk}_{rs} = a^{kj}_{rs} = a^{jk}_{sr}$, and satisfy}
$$
\|a^{jk}_{rs} - \lambda_1 \delta^j_k \delta^r_s - \lambda_2 \delta^k_r \delta^j_s\|_{L^\infty(\rO)} \ll 1 \eqno{\rm(\ref{smallness})}
$$
for some positive constants $\lambda_1$ and $\lambda_2$.
\begin{enumerate}
\item {\sl Suppose that $(w,q)\in \rV \times L^2(\rO)$ is a weak solution to the following elliptic equation
  $$
  - \big[a^{jk}_{rs} w^r_{,j}\big]_{,k} + q_{,s} = f^s \hspace{58pt}\text{in}\quad\rO\,, \eqno{\rm(\ref{elliptic}a)}
  $$
  $$
  \hspace{54pt}\div w = 0 \hspace{64pt}\text{in}\quad \rO\,, \eqno{\rm(\ref{elliptic}b)}
  $$
  $$
  \hspace{8pt}a^{jk}_{rs} w^r_{,j} \rN_k - q\rN_s = \ve \Delta_0 w^s + g^s \hspace{18pt}\text{on}\quad\Gamma\,; \eqno{\rm(\ref{elliptic}c)}
  $$
  that is, $(w,q)$ satisfies the variational formulation
  $$
  \begin{array}{l}
   \displaystyle{} \big(a^{jk}_{rs} w^r_{,j}, \varphi^s_{,k})_{L^2(\rO)} - (q,\div \varphi)_{L^2(\rO)} + \ve (w^\pprime, \varphi^\pprime)_{L^2(\Gamma)} \vspace{.2cm}\\
  \displaystyle{} \hspace{60pt} = (f, \varphi)_{L^2(\rO)} + (g,\varphi)_{L^2(\Gamma)} \qquad\qquad\Forall \varphi \in \rV\,.
  \end{array}
  \eqno{\rm(\ref{elliptic_weak})}
  $$
  Then $(w,q) \in H^2(\rO) \times H^1(\rO)$, and there are constants $C$ and $C_\ve$ such that}
  $$
  \begin{array}{l}
   \displaystyle{} \hspace{35pt}\|w\|^2_{H^2(\rO)} \hspace{-1pt}+\hspace{-1pt} \ve \|w\|^2_{H^2(\Gamma)} \hspace{-1pt}+\hspace{-1pt} \|q\|^2_{H^1(\rO)} \hspace{-1pt}\le\hspace{-1pt} C_\ve \|g\|^2_{L^2(\Gamma)} \hspace{-1pt}+\hspace{-1pt} C \big(1 \hspace{-1pt}+\hspace{-1pt} \|a\|^2_{L^\infty(\rO)}\big) \times \vspace{.2cm}\\
   \displaystyle{} \hspace{60pt} \times \Big[\big(1 \hspace{-1pt}+\hspace{-1pt} \|a\|^2_{W^{1,\infty}(\rO)} \big)\|w\|^2_{H^1(\rO)} \hspace{-1pt}+\hspace{-1pt} \|f\|^2_{L^2(\rO)} \hspace{-1pt}+\hspace{-1pt} \|g\|^2_{H^{-0.5}(\Gamma)} \Big] \,.
  \end{array}
  \eqno{\rm(\ref{elliptic_est})}
  $$
\item {\sl Suppose that $a \in W^{1,4}(\rO)$, and $(w,q)\in H^1_0(\rO) \times L^2 (\rO)$ is a weak solution to the following elliptic equation
  $$
  - \big[a^{jk}_{rs} w^r_{,j}\big]_{,k} + q_{,s} = f^s \hspace{16pt}\text{in}\quad\rO\,, \eqno{\rm(\ref{elliptic_Dirichlet}a)}
  $$
  $$
  \hspace{54pt}\div w = 0 \hspace{20pt}\text{in}\quad \rO\,, \eqno{\rm(\ref{elliptic_Dirichlet}b)}
  $$
  $$
  \hspace{68pt}w = 0 \hspace{20pt}\text{on}\quad\Gamma\,; \eqno{\rm(\ref{elliptic_Dirichlet}c)}
  $$
  that is,
  $$
   \ \ \big(a^{jk}_{rs} w^r_{,j}, \varphi^s_{,k})_{L^2(\rO)} - (q,\div \varphi)_{L^2(\rO)} = (f, \varphi)_{L^2(\rO)} \quad\ \Forall \varphi \in H^1_0(\rO) \,. \eqno{\rm(\ref{elliptic_weak_Dirichlet})}
  $$
  Then $(w,q) \in H^2(\rO) \times H^1(\rO)$ satisfies}
  $$
   \qquad\qquad\ \ \|w\|^2_{H^2(\rO)} + \|q\|^2_{H^1(\rO)} \le C \Big[1 + \|f\|^2_{L^2(\rO)} + \|\nabla a\|^4_{L^4(\rO)} \|w\|^2_{H^1(\rO)} \Big]\,. \eqno{\rm(\ref{elliptic_est2})}
  $$
\end{enumerate}
\begin{proof}
For simplicity, we assume $\lambda_1 = 1$ and $\lambda_2 = 0$, while the additional term $\lambda_2 \delta^k_r \delta^j_s$ with $\lambda_2>0$ can be treated in exactly the same way and with the help of the Korn inequality
$$
c_1 \|u\|_{H^1(\rO)} \le \|u\|_{L^2(\rO)} + \|\Def u\|_{L^2(\rO)} \le C_1 \|u\|_{H^1(\rO)}\,.
$$

\noindent {\bf Part 1:} We prove (1) first.

\noindent {\bf Basic energy estimates:}
Letting $\varphi = w$ in (\ref{elliptic_weak}) we obtain that
\begin{equation}\label{basic_elliptic_est}
\|\nabla w\|^2_{L^2(\rO)} + \ve \|w^\pprime\|^2_{L^2(\Gamma)} \le C \Big[\|w\|^2_{L^2(\rO)} + \|f\|^2_{L^2(\rO)} + \|g\|^2_{H^{-0.5}(\Gamma)} \Big]
\end{equation}
and the Lagrange multiplier lemma (with $a^j_i = \delta^j_i$) suggests that
\begin{align}
\|q\|_{L^2(\rO)} &\le C \Big[\|\nabla w\|_{L^2(\rO)} + \ve \|w^\pprime\|_{L^2(\Gamma)} + \|f\|_{L^2(\rO)} + \|g\|_{H^{-0.5}(\rO)} \Big] \nonumber\\
&\le C \Big[\|w\|_{L^2(\rO)} + \|f\|_{L^2(\rO)} + \|g\|_{H^{-0.5}(\rO)} \Big]\,. \label{basic_elliptic_est_q}
\end{align}

\noindent {\bf Estimates in the interior:}
Let $\chi$ be a smooth cut-off function with $\supp(\chi) \cptsubset \rO$, and $\eta_\eps$ be a family of mollifiers. Since
\begin{align}
& \big((a^{jk}_{rs} w^r_{,j}), \eta_\eps * [\chi^2 (\eta_\eps*w^s)_{,\ell}]_{,\ell k}\big)_{L^2(\rO)} = \big(\eta_\eps *(a^{jk}_{rs} w^r_{,j})_{,\ell},[\chi^2 (\eta_\eps*w^s)_{,\ell}]_{,k}\big)_{L^2(\rO)} \nonumber\\
&\qquad = \big(a^{jk}_{rs} (\eta_\eps* w^r_{,j})_{,\ell}, [\chi^2 (\eta_\eps*w^s_{,k})_{,\ell}] + 2 [\chi \chi_{,k} (\eta_\eps*w^s_{,\ell})]\big)_{L^2(\rO)} \nonumber\\
&\qquad\quad + \big((\comm{\eta_\eps*}{a^{jk}_{rs}} w^r_{,j})_{,\ell}, [\chi^2 (\eta_\eps*w^s_{,k})_{,\ell}] + 2 [\chi \chi_{,k} (\eta_\eps*w^s_{,\ell})] \big)_{L^2(\rO)} \nonumber\\
&\qquad \ge \big(1 - \|a - \id\otimes \id\|_{L^\infty(\rO)}\big) \|\chi \nabla (\eta_\eps* \nabla w)\|^2_{L^2(\rO)} \label{testing_eq1} \\
&\qquad\quad - C\|a\|_{L^\infty(\rO)} \|\nabla w\|_{L^2(\rO)} \|\chi \nabla (\eta_\eps* \nabla w)\|_{L^2(\rO)} \nonumber\\
&\qquad\quad - C \big\|(\comm{\eta_\eps*}{a^{jk}_{rs}} w^r_{,j})_{,\ell}\big\|_{L^2(\rO)} \Big[\|\chi \nabla (\eta_\eps* \nabla w)\|_{L^2(\rO)} + \|\nabla w\|_{L^2(\rO)}\Big] \nonumber
\end{align}
and by $\div w = 0$,
\begin{align}
& - (q, \eta_\eps * [\chi^2 (\eta_\eps*w^i)_{,\ell}]_{,\ell i}\big)_{L^2(\rO)} = - 2 (q, \eta_\eps * [\chi \chi_{,i} (\eta_\eps*w^i_{,\ell})]_{,\ell}\big)_{L^2(\rO)} \nonumber\\
& \qquad\quad \le C \|q\|_{L^2(\rO)} \Big[\|\nabla w\|_{L^2(\rO)} + \|\chi \nabla (\eta_\eps*\nabla)\|_{L^2(\rO)} \Big]\,, \label{testing_eq2}
\end{align}
the use of $\eta_\eps * [\chi^2 (\eta_\eps*w)_{,\ell}]_{,\ell}$ as a test function in (\ref{elliptic_weak}) suggests that
\begin{align}
& \big(1 - \|a - \id\otimes \id\|_{L^\infty(\rO)}\big) \|\chi \nabla (\eta_\eps* \nabla w)\|^2_{L^2(\rO)} \nonumber\\
&\qquad\le C \|a\|_{L^\infty(\rO)} \|\nabla w\|_{L^2(\rO)} \|\chi \nabla (\eta_\eps* \nabla w)\|_{L^2(\rO)} \nonumber\\
&\qquad\quad + C \big\|(\comm{\eta_\eps*}{a^{jk}_{rs}} w^r_{,j})_{,\ell}\big\|_{L^2(\rO)} \Big[\|\chi \nabla (\eta_\eps* \nabla w)\|_{L^2(\rO)} + \|\nabla w\|_{L^2(\rO)}\Big] \label{ineq_temp1}\\
&\qquad\quad + C \|q\|_{L^2(\rO)} \Big[\|\nabla w\|_{L^2(\rO)} + \|\chi \nabla (\eta_\eps*\nabla)\|_{L^2(\rO)} \Big] \nonumber \\
&\qquad\quad + C \|f\|_{L^2(\rO)} \|\chi \nabla (\eta_\eps*\nabla w)\|_{L^2(\rO)}\,. \nonumber
\end{align}
Similar to (\ref{comm_est_temp2}),
$$
\big\|(\comm{\eta_\eps*}{a^{jk}_{rs}} w^r_{,j})_{,\ell}\big\|_{L^2(\rO)} \le C \|\nabla a\|_{L^\infty(\rO)} \|\nabla w\|_{L^2(\rO)}\,;
$$
thus applying Young's inequality to inequality (\ref{ineq_temp1}) we find that
\begin{align*}
& \|\chi \nabla (\eta_\eps* \nabla w)\|^2_{L^2(\rO)} \le C \Big[\|a\|_{W^{1,\infty}(\rO)} \hspace{-1pt}+\hspace{-1pt} \|a\|^2_{W^{1,\infty}(\rO)}\Big] \|\nabla w\|^2_{L^2(\rO)} \hspace{-1pt}+\hspace{-1pt} C \|f\|^2_{L^2(\rO)} \\
&\qquad\quad +\hspace{-1pt} C \|q\|_{L^2(\rO)} \|\nabla w\|_{L^2(\rO)} \\
&\qquad \le C \Big[ \big(1 \hspace{-1pt}+\hspace{-1pt} \|a\|^2_{W^{1,\infty}(\rO)} \big) \|\nabla w\|^2_{L^2(\rO)} \hspace{-1pt}+\hspace{-1pt} \|w\|^2_{L^2(\rO)} \hspace{-1pt}+\hspace{-1pt} \|f\|^2_{L^2(\rO)} \hspace{-1pt}+\hspace{-1pt} \|g\|^2_{H^{-0.5}(\Gamma)}\Big]\,.
\end{align*}
By (\ref{basic_elliptic_est}), the right-hand side of the estimate above is independent of $\eps$, we can pass $\eps \to 0$ and find that $w\in H^2_{\text{loc}}(\rO)$ satisfying
\begin{equation}\label{w_H2_loc_est}
\begin{array}{l}
\displaystyle{} \|\chi \nabla^2 w\|^2_{L^2(\rO)} \vspace{.2cm}\\
\displaystyle{} \quad\ \ \le C \Big[ \big(1 \hspace{-1pt}+\hspace{-1pt} \|a\|^2_{W^{1,\infty}(\rO)} \big) \|\nabla w\|^2_{L^2(\rO)} \hspace{-1pt}+\hspace{-1pt} \|w\|^2_{L^2(\rO)} \hspace{-1pt}+\hspace{-1pt} \|f\|^2_{L^2(\rO)} \hspace{-1pt}+\hspace{-1pt} \|g\|^2_{H^{-0.5}(\Gamma)}\Big]
\end{array}
\end{equation}
for some constant $C$ depending on $\nabla \chi$.

Let $\psi\in H^2(\rO)$ be given, and $\varphi = \chi \nabla \psi$ be a test function in (\ref{elliptic_weak}). Then
\begin{align*}
& (\chi q,\Delta \psi)_{L^2(\rO)} = - (a^{jk}_{rs} w^r_{,j}, (\chi \psi_{,s})_{,k})_{L^2(\rO)} + (\chi f - q \nabla \chi, \nabla \psi)_{L^2(\rO)} \\
&\quad = \big((a^{jk}_{rs} \chi w^r_{,j})_{,k}, \psi_{,s}\big)_{L^2(\rO)} - (a^{jk}_{rs} w^r_{,j} \chi_{,k}, \psi_{,s})_{L^2(\rO)} + (\chi f - q \nabla \chi, \nabla \psi)_{L^2(\rO)} \,;
\end{align*}
thus $\chi q$ is a distributional solution to
\begin{alignat*}{2}
\Delta (\chi q) &= f_1 \equiv - \big[a^{jk}_{rs} w^r_{,j} \nabla \chi\big]_{,s} + \big[(a^{jk}_{rs} \chi w^r_{,j})_{,k}\big]_{,s} - \div (\chi f - q \nabla \chi) \quad\ &&\text{in}\quad \rO\,,\\
\chi q &= 0 &&\text{on}\quad\Gamma\,.
\end{alignat*}
Since $f_1 \in H^{-1}(\rO)$ satisfies that
\begin{align*}
&\|f_1\|_{H^{-1}(\rO)} \le C \Big[\|a^{jk}_{rs} w^r_{,j} \nabla \chi\|_{L^2(\rO)} + \|(a^{jk}_{rs} \chi w^r_{,j})_{,k}\|_{L^2(\rO)} + \|\chi f - q \nabla \chi\|_{L^2(\rO)} \Big] \\
&\quad \le C \Big[\|a\|_{W^{1,\infty}(\rO)} \|\nabla w\|_{L^2(\rO)} + \|a\|_{L^\infty(\rO)} \|\chi \nabla^2 w\|_{L^2(\rO)} + \|f\|_{L^2(\rO)} + \|q\|_{L^2(\rO)} \Big]\,,
\end{align*}
we find that $q \in H^1_{\text{loc}}(\rO)$, and
\begin{equation}\label{q_H1_loc_est}
\begin{array}{l}
\displaystyle{} \|\chi q\|_{H^1(\rO)} \le C \|a\|_{L^\infty(\rO)} \big(1+\|a\|_{W^{1,\infty}(\rO)}\big) \|\nabla w\|_{L^2(\rO)} \vspace{.2cm}\\
\displaystyle{} \hspace{55pt} + C \big(1+ \|a\|_{L^\infty(\rO)} \big) \Big[\|w\|_{L^2(\rO)} + \|f\|_{L^2(\rO)} + \|g\|_{H^{-0.5}(\Gamma)} \Big]
\end{array}
\end{equation}
for some constant $C$ depending on $\nabla \chi$.

\noindent {\bf Estimates near the boundary:}
Now we focus on the boundary regularity. Let $\{\chi_m\}_{m=1}^K$ be cut-off functions supported near the boundary $\Gamma$ 
introduced in Section \ref{sec:v_ve_indep_est}. For a fixed $m$, define $\wfa = (\nabla \theta_m)^{-1}$, $\widetilde{\chi} = \chi_m\circ\theta_m$ in $\supp(\widetilde{\chi}_m)$, $(\widetilde{\bfw}, \widetilde{q}) = (w, q)\circ \theta_m$ in $B_+(0,r_m)$, and let
$$
\varphi^s = \big[\theta^s_{m,\ell} \big(\Lambda_\eps (\widetilde{\chi}^2 (\Lambda_\eps (\wfa^\ell_j \, \widetilde{\bfw}^j)_{,1}))_{,1}\big)\big] \circ \theta_m^{-1}
$$
be a test function in (\ref{elliptic_weak}), where if $x = \theta_m(y)$, then ${}_{,1}$ denotes the partial derivative with respect to $y_1$. Since
\begin{align*}
\varphi^s \circ\theta_m 
&= \theta^s_{m,\ell} \big(\Lambda_\eps (\widetilde{\chi}^2 (\Lambda_\eps (\wfa^\ell_{j,1} \, \widetilde{\bfw}^j)_{,1}))\big) + 2 \theta^s_{m,\ell} \big(\Lambda_\eps (\widetilde{\chi} \widetilde{\chi}_{,1} (\Lambda_\eps (\wfa^\ell_j \, \widetilde{\bfw}^j)_{,1}))\big) \\
&\quad + \theta^s_{m,\ell} \big(\Lambda_\eps (\widetilde{\chi}^2 (\comm{\Lambda_\eps}{\wfa^\ell_j} \, \widetilde{\bfw}^j_{,1})_{,1})\big) + \theta^s_{m,\ell} \big(\Lambda_\eps (\widetilde{\chi}^2 \wfa^\ell_{j,1} (\Lambda_\eps \widetilde{\bfw}^j_{,1})) \big) \\
&\quad + \theta^s_{m,\ell} \big(\Lambda_\eps (\widetilde{\chi}^2 \wfa^\ell_j (\Lambda_\eps \widetilde{\bfw}^j)_{,11}) \big)\,,
\end{align*}
similar to (\ref{testing_eq1}) we have
\begin{align*}
& \big(a^{jk}_{rs} w^r_{,j}, \varphi^s_{,k})_{L^2(\rO)} \\
&\qquad = \big((a^{jk}_{rs}\circ\theta_m)\, \wfa^\ell_j \widetilde{\bfw}^r_{,\ell}, \wfa^i_k \big[\theta^s_{m,\ell} \big(\Lambda_\eps (\widetilde{\chi}^2 (\Lambda_\eps (\wfa^\ell_p\, \widetilde{\bfw}^p)_{,1}))_{,1}\big)\big]_{,i}\big)_{L^2(B_+(0,r_m))} \\
&\qquad \ge \big(\lambda - \|\wfa\|^2_{L^\infty(B_+(0,r_m))} \|a - \id\otimes \id\|_{L^\infty(\rO)}\big) \|\widetilde{\chi} (\Lambda_\eps \nabla\widetilde{\bfw})_{,1}\|^2_{L^2(B_+(0,r_m))} \\
&\qquad\quad - C \|a\|_{W^{1,\infty}(\rO)} \|\nabla w\|_{L^2(\rO)} \Big[\|\widetilde{\chi} (\Lambda_\eps \nabla\widetilde{\bfw})_{,1}\|_{L^2(B_+(0,r_m))} + \|w\|_{H^1(\rO)} \Big]\,,
\end{align*}
in which we use the property that $\wfa^\ell_j \wfa^i_j \xi_\ell \xi_i \ge \lambda |\xi|^2$ for some positive constant $\lambda$. Similar to (\ref{testing_eq2}), by $\wfa^k_i \theta^i_{m,\ell} = \delta^k_\ell$ and $\wfa^j_i \widetilde{\bfw}^i_{,j} = (\div w)\circ\theta_m = 0$ in $B_+(0,r_m)$,
\begin{align*}
- (q, \div \varphi)_{L^2(\rO)} &= - \big(\widetilde{q}, \wfa^k_i \big[\theta^i_{m,\ell} \big(\Lambda_\eps (\widetilde{\chi}^2 (\Lambda_\eps (\wfa^\ell_j \, \widetilde{\bfw}^j)_{,1}))_{,1}\big)\big]_{,k}\big)_{L^2(B_+(0,r_m))} \\
&= - \big(\widetilde{q}, \big[\Lambda_\eps (\widetilde{\chi}^2 (\Lambda_\eps (\wfa^\ell_j \, \widetilde{\bfw}^j)_{,1}))_{,1}\big]_{,k}\big)_{L^2(B_+(0,r_m))} \\
&\ge\hspace{-1pt} -\, C \|q\|_{L^2(\rO)} \Big[\|w\|_{H^1(\rO)} + \|\widetilde{\chi} (\Lambda_\eps \nabla \widetilde{\bfw})_{,1}\|_{L^2(B_+(0,r_m))} \Big]\,.
\end{align*}
Moreover, for the two integrals over the boundary, we have
\begin{align*}
& (w^\pprime, \varphi^\pprime)_{L^2(\Gamma)} \ge \|\widetilde{\chi} (\Lambda_\eps \widetilde{\bfw})_{,11}\|^2_{L^2(B(0,r_m)\cap \{y_2 = 0\})} \\
&\quad - C \|\widetilde{\chi} (\Lambda_\eps \widetilde{\bfw})_{,11}\|_{L^2(B(0,r_m)\cap \{y_2 = 0\})} \Big[\|\widetilde{\chi} (\Lambda_\eps \widetilde{\bfw})_{,1}\|_{L^2(B(0,r_m)\cap \{y_2 = 0\})} + \|w\|_{L^2(\Gamma)}\Big]
\end{align*}
and
\begin{align*}
\big(g, \varphi)_{L^2(\Gamma)} \le 
C \|g\|_{L^2(\Gamma)} \Big[\|\widetilde{\chi} (\Lambda_\eps \widetilde{\bfw})_{,11}\|_{L^2(B(0,r_m)\cap \{y_2 = 0\})} + \|w\|_{H^1(\Gamma)}\Big];
\end{align*}
thus the use of $\varphi^s = \big[\theta^s_{m,\ell} \big(\Lambda_\eps (\widetilde{\chi}^2 (\Lambda_\eps (\wfa^\ell_j \, \widetilde{\bfw}^j)_{,1}))_{,1}\big)\big] \circ \theta_m^{-1}$ as a test function in (\ref{elliptic_weak}) suggests that
\begin{align*}
& \frac{\lambda}{2} \|\widetilde{\chi} (\Lambda_\eps \nabla \widetilde{\bfw})_{,1}\|^2_{L^2(B_+(0,r_m))} + \frac{\ve}{2} \|\widetilde{\chi} (\Lambda_\eps \widetilde{\bfw})_{,11}\|^2_{L^2(B(0,r_m)\cap \{y_2 > 0\})} \\
&\qquad \le C \big(1 + \|a\|^2_{W^{1,\infty}(\rO)} \big)\|w\|^2_{H^1(\rO)} + C \Big[\|q\|^2_{L^2(\rO)} + \|f\|^2_{L^2(\rO)}\Big] \\
&\qquad\quad + C_\ve \|g\|^2_{L^2(\Gamma)} + \ve \|w\|^2_{H^1(\Gamma)} \\
&\qquad \le C \Big[\big(1 + \|a\|^2_{W^{1,\infty}(\rO)} \big)\|w\|^2_{H^1(\rO)} + \|f\|^2_{L^2(\rO)} + \|g\|^2_{H^{-0.5}(\Gamma)} \Big] + C_\ve \|g\|^2_{L^2(\Gamma)}\,.
\end{align*}
We note here that $C_\ve = \O(\ve^{-1})$.

Sum the estimates above over all $m$. Since the right-hand side of the estimate above is independent of $\eps$, letting $\U = \bigcup\limits_{m=1}^K \big\{\chi_m > 1/2\big\}$, we conclude that
\begin{equation}\label{w_H2_bdy_est}
\begin{array}{l}
\displaystyle{} \lambda \|\bp w\|^2_{H^1(\U)} \hspace{-1pt}+\hspace{-1pt} \ve \|w\|^2_{H^2(\Gamma)} \vspace{.2cm}\\
\displaystyle{} \quad\ \le C \Big[\big(1 \hspace{-1pt}+\hspace{-1pt} \|a\|^2_{W^{1,\infty}(\rO)} \big)\|w\|^2_{H^1(\rO)} \hspace{-1pt}+\hspace{-1pt} \|f\|^2_{L^2(\rO)} \hspace{-1pt}+\hspace{-1pt} \|g\|^2_{H^{-0.5}(\Gamma)} \Big] \hspace{-1pt}+\hspace{-1pt} C_\ve \|g\|^2_{L^2(\Gamma)}\,.
\end{array}
\end{equation}

Let $\varphi^i = \big[\theta^i_{m,\ell}\, \widetilde{\chi} \Lambda_\eps(\wfa^\ell_j \psi^j)_{,1}\big]\circ \theta_m^{-1}$ be a test function in (\ref{elliptic_weak}). First of all we find that
\begin{align*}
(q,\div \varphi)_{L^2(\rO)} 
&= - \big((a^{jk}_{rs}\circ\theta_m) \wfa^p_j \widetilde{\bfw}^r_{,p} , \wfa^q_k \big[\theta^s_{m,\ell}\, \widetilde{\chi} \Lambda_\eps(\wfa^\ell_i \psi^i)_{,1}\big]_{,q} \big)_{L^2(B_+(0,r_m))} \\
&\quad + \ve (\widetilde{\bfw}^{\pprime\prime}, \theta^i_{m,\ell}\, \widetilde{\chi} \Lambda_\eps (\wfa^\ell_j \psi^j)_{,1}\big)_{L^2(B_+(0,r_m)\cap \{y_2 = 0\})} \\
&\quad + \big(f\circ\theta_m, \theta^i_{m,\ell}\, \widetilde{\chi} \Lambda_\eps(\wfa^\ell_j \psi^j)_{,1}\big)_{L^2(B_+(0,r_m))} \\
&\quad + \big(g\circ\theta_m, \theta^i_{m,\ell}\, \widetilde{\chi} \Lambda_\eps(\wfa^\ell_j \psi^j)_{,1}\big)_{L^2(B_+(0,r_m)\cap \{y_2=0\})} \,.
\end{align*}
By $\wfa^k_i \theta^i_{m,\ell} = \delta^k_\ell$ and the Piola identity $\wfa^k_{j,k} = 0$,
\begin{align*}
& (q,\div \varphi)_{L^2(\rO)} = \big(\widetilde{q}, \wfa^k_i \big[\theta^i_{m,\ell}\, \widetilde{\chi} \Lambda_\eps(\wfa^\ell_j \psi^j)_{,1}\big]_{,k}\big)_{L^2(B_+(0,r_m))} \\
&\quad = \big(\widetilde{q}, \wfa^k_i \big(\theta^i_{m,\ell}\, \widetilde{\chi}\big)_{,k} \Lambda_\eps (\wfa^\ell_j \psi^j)_{,1} + \widetilde{\chi} \Lambda_\eps(\wfa^k_j \psi^j_{,k})_{,1}\big)_{L^2(B_+(0,r_m))} \\
&\quad = \big(\Lambda_\eps(\wfa^k_i (\theta^i_{m,\ell}\, \widetilde{\chi})_{,k} \widetilde{q}), (\wfa^\ell_j \psi^j)_{,1} \big)_{L^2(B_+(0,r_m))} \hspace{-2pt}- \big(\Lambda_\eps(\widetilde{\chi} \widetilde{q})_{,1}, \wfa^k_j \psi^j_{,k}\big)_{L^2(B_+(0,r_m))} \,.
\end{align*}
Therefore, if we define a bounded linear function $T: \rV \to \bbR$ by
\begin{align*}
T(\varphi) &= \big(\Lambda_\eps(\wfa^k_i (\theta^i_{m,\ell}\, \widetilde{\chi})_{,k} \widetilde{q}), (\wfa^k_j \psi^j)_{,1}\big)_{L^2(B_+(0,r_m))} \\
&\quad - \big(\Lambda_\eps\big[\theta^s_{m,\ell}\, \widetilde{\chi} \wfa^q_k \big((a^{jk}_{rs}\circ\theta_m) \wfa^p_j \widetilde{\bfw}^r_{,p}\big)_{,q}\big], (\wfa^\ell_i \psi^i)_{,1} \big)_{L^2(B_+(0,r_m))} \\
&\quad - \ve (\Lambda_\eps\big[\theta^i_{m,\ell}\, \widetilde{\chi} \widetilde{\bfw}^{\pprime\prime}\big], (\wfa^\ell_j \psi^j)_{,1} \big)_{L^2(B_+(0,r_m)\cap \{y_2 = 0\})} \\
&\quad - \big(\Lambda_\eps\big[\theta^i_{m,\ell}\, \widetilde{\chi} (f\circ\theta_m)\big], (\wfa^\ell_j \psi^j)_{,1}\big)_{L^2(B_+(0,r_m))} \\
&\quad - \big(\Lambda_\eps\big[\theta^i_{m,\ell}\, \widetilde{\chi}(g\circ\theta_m)\big], (\wfa^\ell_j \psi^j)_{,1}\big)_{L^2(B_+(0,r_m)\cap \{y_2=0\})}\,,
\end{align*}
by the Lagrange multiplier lemma (with $a^j_i = \wfa^j_i$) we find that $\Lambda_\eps (\widetilde{\chi}\widetilde{q})_{,1}$ is the Lagrange multiplier associated to $T$, and
\begin{align*}
& \|\Lambda_\eps (\widetilde{\chi}\widetilde{q})_{,1}\|_{L^2(B_+(0,r_m))}
\le C \Big[\|q\|_{L^2(\rO)} + \|a\|_{L^\infty(\rO)} \|\chi \nabla^2 w\|_{L^2(\rO)} \\
&\qquad\quad + \|\nabla a\|_{L^\infty(\rO)} \|\nabla w\|_{L^2(\rO)} + \ve \|w\|_{H^2(\Gamma)} + \|f\|_{L^2(\rO)} + \|g\|_{L^2(\Gamma)} \Big] \\
&\qquad \le C \big(1 + \|a\|_{L^\infty(\rO)}\big) \big(1 + \|a\|_{W^\infty(\rO)}\big) \|w\|_{H^1(\rO)} \\
&\qquad\quad + C \big(1 + \|a\|_{L^\infty(\rO)}\big) \Big[\|f\|_{L^2(\rO)} + \|g\|_{H^{-0.5}(\Gamma)} \Big] + C \|g\|_{L^2(\Gamma)}\,.
\end{align*}
Passing $\eps$ to zero, we obtain that
\begin{equation}\label{q_H1_bdy_est}
\begin{array}{l}
\displaystyle{} \|(\widetilde{\chi}\widetilde{q})_{,1}\|_{L^2(B_+(0,r_m))} \le C \big(1 + \|a\|_{L^\infty(\rO)} \big) \big(1 + \|a\|_{W^{1,\infty}(\rO)} \big) \|w\|_{H^1(\rO)} \vspace{.2cm}\\
\displaystyle{} \qquad\qquad + C \big(1 + \|a\|_{L^\infty(\rO)} \big) \Big[\|f\|_{L^2(\rO)} + \|g\|_{H^{-0.5}(\Gamma)} \Big] + C \|g\|_{L^2(\Gamma)} \,.
\end{array}
\end{equation}

\noindent {\bf Estimates of normal derivatives of $w$ and $q$:}
Since $(w,q)\in H^2_{\text{loc}}(\rO) \times H^1_{\text{loc}}(\rO)$, (\ref{elliptic}) holds almost everywhere. Making the change of variable $x = \theta_m(y)$, (\ref{elliptic}a,b) are transformed to
\begin{subequations}\label{elliptic_temp}
\begin{alignat}{2}
- \big[\wfa^\ell_k (a^{jk}_{rs}\circ\theta_m) \wfa^i_j \widetilde{\bfw}^r_{,i}\big]_{,\ell} + \wfa^k_s \widetilde{q}_{,k} &= (f\circ\theta_m) \quad\ &&\text{in}\quad B_+(0,r_m)\,,\\
\wfa^j_i \widetilde{\bfw}^i_{,j} &= 0 &&\text{in}\quad B_+(0,r_m)\,.
\end{alignat}
\end{subequations}
Differentiating (\ref{elliptic_temp}b) with respect to $y_2$, after rearrangement we find that
\begin{subequations}\label{rearrangement}
\begin{alignat}{2}
- (a^{jk}_{rs}\circ\theta_m) \wfa^2_j \wfa^2_k \widetilde{\bfw}^r_{,22} + \wfa^2_s \widetilde{q}_{,2} &= f^s_2 &&\text{in}\quad B_+(0,r_m)\,,\\
\wfa^2_i \widetilde{\bfw}^i_{,22} &= f_3 \qquad&&\text{in}\quad B_+(0,r_m)\,,
\end{alignat}
\end{subequations}
where $f_2$ and $f_3$ are given by
\begin{align*}
f^s_2 &\equiv (f\circ\theta_m) + \big[\wfa^\ell_k (a^{jk}_{rs}\circ\theta_m) \wfa^i_j\big]_{,\ell} \widetilde{\bfw}^r_{,i} + (a^{jk}_{rs}\circ\theta_m) \wfa^1_j \wfa^1_k \widetilde{\bfw}^r_{,11} \\
&\quad + 2 (a^{jk}_{rs}\circ\theta_m) \wfa^1_j \wfa^2_k \widetilde{\bfw}^r_{,12} - \wfa^1_s \widetilde{q}_{,1} \,,\\
f_3 &\equiv - \wfa^2_{i,2} \widetilde{\bfw}^i_{,2} - \wfa^1_i \widetilde{\bfw}^i_{,12} - \wfa^1_{i,2} \widetilde{\bfw}^i_{,1}\,,
\end{align*}
and satisfy
\begin{align*}
& \|\widetilde{\chi} f_2\|_{L^2(B_+(0,r_m))} + \|\widetilde{\chi} f_3\|_{L^2(B_+(0,r_m))} \le C \Big[\|f\|_{L^2(\rO)} + \|\widetilde{\chi} \widetilde{q}^\pprime\|_{L^2(B_+(0,r_m))} \\
&\qquad\quad + \big(1+ \|a\|_{W^{1,\infty}(\rO)}\big) \|w\|_{H^1(\rO)} + \big(1+\|a\|_{L^\infty(\rO)}\big) \|\widetilde{\chi} \widetilde{\bfw}^\pprime\|_{H^1(B_+(0,r_m))} \Big] \\
&\qquad \le C \big(1 + \|a\|_{L^\infty(\rO)}\big) \big(1 + \|a\|_{W^\infty(\rO)}\big) \|w\|_{H^1(\rO)} \\
&\qquad\quad + C \big(1 + \|a\|_{L^\infty(\rO)}\big) \Big[\|f\|_{L^2(\rO)} + \|g\|_{H^{-0.5}(\Gamma)} \Big] + C_\ve \|g\|_{L^2(\Gamma)} \,.
\end{align*}
Write (\ref{rearrangement}) in the matrix form
$$
\left[\begin{array}{ccc}
- (a^{jk}_{11}\circ\theta_m) \wfa^2_j \wfa^2_k & - (a^{jk}_{21}\circ\theta_m) \wfa^2_j \wfa^2_k & \wfa^2_1 \vspace{.2cm} \\
- (a^{jk}_{12}\circ\theta_m) \wfa^2_j \wfa^2_k & - (a^{jk}_{22}\circ\theta_m) \wfa^2_j \wfa^2_k & \wfa^2_2 \vspace{.2cm} \\
\wfa^2_1 & \wfa^2_2 & 0
\end{array}
\right]
\left[\begin{array}{c}
\widetilde{\chi} \widetilde{\bfw}^1_{,22} \vspace{.2cm}\\
\widetilde{\chi} \widetilde{\bfw}^2_{,22} \vspace{.2cm}\\
\widetilde{\chi} \widetilde{q}_{,2}
\end{array}
\right] = \left[\begin{array}{c}
\widetilde{\chi} f^1_2 \vspace{.2cm}\\
\widetilde{\chi} f^2_2 \vspace{.2cm}\\
\widetilde{\chi} f_3
\end{array}
\right]\,.
$$
Since $a^{jk}_{rs} \approx \delta^j_k \delta^r_s$ and $\det(\wfa) = \det(\nabla \theta_m)^{-1} = 1$,
$$
\det\left(\left[\begin{array}{ccc}
- (a^{jk}_{11}\circ\theta_m) \wfa^2_j \wfa^2_k & - (a^{jk}_{21}\circ\theta_m) \wfa^2_j \wfa^2_k & \wfa^2_1 \vspace{.2cm} \\
- (a^{jk}_{12}\circ\theta_m) \wfa^2_j \wfa^2_k & - (a^{jk}_{22}\circ\theta_m) \wfa^2_j \wfa^2_k & \wfa^2_2 \vspace{.2cm} \\
\wfa^2_1 & \wfa^2_2 & 0
\end{array}
\right]\right) \approx \big[(\wfa^2_1)^2 + (\wfa^2_2)^2\big]^2 \ne 0\,.
$$
Therefore, (\ref{rearrangement}) is solvable, and
\begin{align}
& \|\widetilde{\chi} \widetilde{\bfw}_{,22}\|_{L^2(B_+(0,r_m))} + \|\widetilde{\chi} \widetilde{q}_{,2}\|_{L^2(B_+(0,r_m))} \nonumber\\
&\qquad\quad \le C \Big[\|\widetilde{\chi} f_2\|_{L^2(B_+(0,r_m))} + \|\widetilde{\chi} f_3\|_{L^2(B_+(0,r_m))} \Big] \nonumber\\
&\qquad\quad \le C \big(1 + \|a\|_{L^\infty(\rO)}\big) \big(1 + \|a\|_{W^\infty(\rO)}\big) \|w\|_{H^1(\rO)} \label{wq_bdy_est}\\
&\qquad\qquad + C \big(1 + \|a\|_{L^\infty(\rO)}\big) \Big[\|f\|_{L^2(\rO)} + \|g\|_{H^{-0.5}(\Gamma)} \Big] + C_\ve \|g\|_{L^2(\Gamma)}\,. \nonumber
\end{align}
Estimate (\ref{elliptic_est}) then follows from the combination of (\ref{w_H2_loc_est}), (\ref{q_H1_loc_est}), (\ref{w_H2_bdy_est}), (\ref{q_H1_bdy_est}), and (\ref{wq_bdy_est}).

\noindent {\bf Part 2:} Next, we assume that $a\in W^{1,4}(\rO)$ and $(w,q) \in H^1_0(\rO) \times L^2(\rO)$ is a weak solution to (\ref{elliptic_Dirichlet}). Let $\ve$ be a smoothing parameter and $a^\ve \in W^{1,\infty}(\rO)$ be a sequence converging to $a$ in $W^{1,4}(\rO)$, and $a^\ve$ still satisfies the requirement (\ref{smallness}) by the following construction: let $\rE:W^{1,4}(\rO) \to W^{1,4}(\bbR^2)$ be an extension map, and $a^\ve$ is defined by
$$
a^\ve = \eta_\ve * (\rE a)\,.
$$
Let $w^\ve$ be the weak solution to
\begin{subequations}\label{elliptic_part2_temp}
\begin{alignat}{2}
- \big[(a^\ve)^{jk}_{rs} w^{\ve r}_{,j}\big]_{,k} + q^\ve_{,s} &= f^s &&\text{in}\quad\rO\,,\\
\div w^\ve &= 0 &&\text{in}\quad\rO\,,\\
w^\ve &= 0 \qquad&&\text{on}\quad\Gamma\,;
\end{alignat}
\end{subequations}
that is,
$$
\big((a^\ve)^{jk}_{rs} w^{\ve r}_{,j}, \varphi^s_{,k}\big)_{L^2(\rO)} = (f,\varphi)_{L^2(\rO)} \qquad\Forall \varphi \in H^1_{0,\div}(\rO) \equiv H^1_0(\rO) \cap H^1_\div(\rO)\,.
$$
The existence of a weak solution is guaranteed by the Lax-Milgram theorem, and $w^\ve$ satisfies the basic energy estimate
\begin{equation}\label{basic_est_part2}
\|w^\ve\|_{H^1(\rO)} \le C \|f\|_{L^2(\rO)}\,.
\end{equation}
(A different version of) the Lagrange multiplier lemma then suggests that there exists a unique $q\in L^2(\rO)$ such that
\begin{equation}\label{weak_part2}
\big((a^\ve)^{jk}_{rs} w^{\ve r}_{,j}, \varphi^s_{,k}\big)_{L^2(\rO)} + (q^\ve, \div \varphi)_{L^2(\rO)} = (f,\varphi)_{L^2(\rO)} \qquad\Forall \varphi \in H^1_0(\rO)
\end{equation}
and
\begin{equation}\label{basic_est_part2_q}
\|q^\ve\|_{L^2(\rO)} \le C \Big[\|\nabla w^\ve\|_{L^2(\rO)} + \|f\|_{L^2(\rO)}\Big] \le C \|f\|_{L^2(\rO)}\,.
\end{equation}
The argument in part 1 then can applied again (in the case $\ve=0$ and $g=0$) to show that $(w^\ve,q^\ve)\in H^2(\rO)\times H^1(\rO)$.

Since $(w^\ve,q^\ve)$ is a strong solution, we can perform the estimates as those in Part 1 in a slightly different fashion. To illustrate the idea, we focus on the interior estimates. The same as part 1, we use $\eta_\eps * [\chi^2 (\eta_\eps*w^{\ve s}_{,\ell})]_{,\ell}$ as a test function in (\ref{elliptic_weak_Dirichlet}), here we emphasize that here the convolution parameter is $\eps$ instead of $\ve$. This time we integrate by parts in $x_\ell$ first and then move the convolution around next (in part 1 we move the convolution around first then integrate by parts for the next step since the solution does not belong to $H^2(\rO)$ yet) to obtain that
\begin{align*}
& - \big((a^\ve)^{jk}_{rs} w^{\ve r}_{,j}, \eta_\eps * [\chi^2 (\eta_\eps*w^{\ve s}_{,\ell})]_{,\ell k}\big)_{L^2(\rO)} \\
&\qquad\quad = \big(\big[(a^\ve)^{jk}_{rs} w^{\ve r}_{,j}\big]_{,\ell}, \eta_\eps * [\chi^2 (\eta_\eps*w^{\ve s}_{,\ell})]_{,k}\big)_{L^2(\rO)} \\
&\qquad\quad = \big((a^\ve)^{jk}_{rs} \chi (\eta_\eps * w^{\ve r}_{,\ell j}), \chi (\eta_\eps*w^{\ve s}_{,\ell k})\big)_{L^2(\rO)} \\
&\qquad\qquad + \big(\comm{\eta_\eps*}{(a^\ve)^{jk}_{rs}} w^{\ve r}_{,\ell j}, [\chi^2 (\eta_\eps*w^{\ve s})_{,\ell}]_{,k}\big)_{L^2(\rO)} \\
&\qquad\qquad + 2 \big((a^\ve)^{jk}_{rs} w^{\ve r}_{,\ell j}, \eta_\eps * [\chi \chi_{,k} (\eta_\eps*w^{\ve s}_{,\ell})]\big)_{L^2(\rO)} \\
&\qquad\qquad + \big((a^\ve)^{jk}_{rs,\ell} w^{\ve r}_{,j}, \eta_\eps * [\chi^2 (\eta_\eps*w^{\ve s})_{,\ell}]_{,k}\big)_{L^2(\rO)} \,.
\end{align*}
By (\ref{comm_est_temp1}) and the basic energy estimate (\ref{basic_est_part2}),
\begin{align*}
& \|\chi (\eta_\eps* \nabla^2 w^\ve)\|^2_{L^2(\rO)} \le \big(\big[(a^\ve)^{jk}_{rs} w^{\ve r}_{,j}\big]_{,k}, \eta_\eps * [\chi^2 (\eta_\eps*w^{\ve s}_{,\ell})]_{,\ell}\big)_{L^2(\rO)} \\
&\qquad\quad + \|a - \id\otimes \id\|_{L^\infty(\rO)} \|\chi (\eta_\eps* \nabla^2 w^\ve)\|^2_{L^2(\rO)} \\
&\qquad\quad + C \eps \|a^\ve\|_{W^{1,\infty}(\rO)} \|\chi \nabla^2 w^\ve\|_{L^2(\rO)} \Big[\|f\|_{L^2(\rO)} + \|\chi (\eta_\eps * \nabla^2 w^\ve)\|_{L^2(\rO)}\Big] \\
&\qquad\quad + C \|a^\ve\|_{L^\infty(\rO)} \|\chi \nabla^2 w^\ve\|_{L^2(\rO)} \|f\|_{L^2(\rO)} \\
&\qquad\quad + C \|\nabla a^\ve\|_{L^4(\rO)} \|\chi \nabla w^\ve\|_{L^4(\rO)} \Big[\|f\|_{L^2(\rO)} + \|\chi (\eta_\eps * \nabla^2 w^\ve)\|_{L^2(\rO)}\Big] \,.
\end{align*}
Since $w^\ve \in H^2(\rO)$ independent of $\eps$, by Young's inequality and passing $\eps \to 0$ we find that
\begin{align}
& \|\chi \nabla^2 w^\ve\|^2_{L^2(\rO)} \le \big(\big[(a^\ve)^{jk}_{rs} w^{\ve r}_{,j}\big]_{,k}, \eta_\eps * [\chi^2 (\eta_\eps*w^{\ve s}_{,\ell})]_{,\ell}\big)_{L^2(\rO)} \nonumber\\
&\qquad\quad + \big(\|a - \id\otimes \id\|_{L^\infty(\rO)} + \delta \big) \|\chi \nabla^2 w^\ve\|^2_{L^2(\rO)} \label{part2_temp_ineq1}\\
&\qquad\quad + C_\delta \Big[\big(1+\|a^\ve\|^2_{L^\infty(\rO)}\big) \|f\|^2_{L^2(\rO)} + \|\nabla a^\ve\|^2_{L^4(\rO)} \|\chi \nabla w^\ve\|^2_{L^4(\rO)} \Big] \,. \nonumber
\end{align}
By interpolation,
\begin{align*}
C_\delta \|\nabla a^\ve\|^2_{L^4(\rO)} &\|\chi \nabla w^\ve\|^2_{L^4(\rO)} \le C_\delta \|\nabla a^\ve\|^2_{L^4(\rO)} \|\chi \nabla w^\ve\|_{L^2(\rO)} \|\chi \nabla w^\ve\|_{H^1(\rO)} \\
&\le C_\delta \|\nabla a^\ve\|^2_{L^4(\rO)} \|\nabla w^\ve\|_{L^2(\rO)} \Big[1 + \|\chi \nabla^2 w^\ve\|_{L^2(\rO)} \Big] \\
&\le C_\delta \Big[
1 + \|\nabla a^\ve\|^4_{L^4(\rO)} \|\nabla w^\ve\|^2_{L^2(\rO)} \Big] + \delta \|\chi \nabla^2 w^\ve\|^2_{L^2(\rO)} \,;
\end{align*}
thus by choosing $\delta >0$ small enough, with the smallness assumption (\ref{smallness}) we find that (\ref{part2_temp_ineq1}) suggests that
\begin{equation}\label{part2_temp_ineq2}
\begin{array}{l}
\displaystyle{} \|\chi \nabla w^\ve\|^2_{L^2(\rO)} \le \big(\big[(a^\ve)^{jk}_{rs} w^{\ve r}_{,j}\big]_{,k}, \eta_\eps * [\chi^2 (\eta_\eps*w^{\ve s}_{,\ell})]_{,\ell}\big)_{L^2(\rO)} \vspace{.15cm}\\
\displaystyle{}\hspace{68pt} + C \Big[1 + \|f\|^2_{L^2(\rO)} + \|\nabla a^\ve\|^4_{L^4(\rO)} \|\nabla w^\ve\|^2_{L^2(\rO)} \Big]\,.
\end{array}
\end{equation}
On the other hand, we have
\begin{align*}
& - \big(q^\ve, \div\big[\eta_\eps * [\chi^2 (\eta_\eps*w^\ve_{,\ell})]_{,\ell}\big]\big)_{L^2(\rO)} = - \big(q^\ve , \eta_\eps * [\chi^2 (\eta_\eps*w^{\ve s}_{,\ell})]_{,\ell s}\big)_{L^2(\rO)} \nonumber\\
&\qquad\qquad = - 2 \big(q^\ve, \eta_\eps * [\chi \chi_{,s} (\eta_\eps* w^{\ve s}_{,\ell})\big]_{,\ell}\big)_{L^2(\rO)} \nonumber\\
&\qquad\qquad \le C \|q^\ve\|_{L^2(\rO)} \Big[\|\nabla w\|_{L^2(\rO)} + \|\chi (\eta_\eps * \nabla^2 w^\ve)\|_{L^2(\rO)}\Big] \nonumber\\
&\qquad\qquad \le C_\delta \|f\|^2_{L^2(\rO)} + \delta \|\chi (\eta_\eps*\nabla^2 w^\ve)\|^2_{L^2(\rO)}
\end{align*}
which implies (by passing $\eps \to 0$) that
\begin{align}
(q^\ve_{,s}, (\chi^2 w^\ve_{,\ell})_{,\ell}\big)_{L^2(\rO)} \le C_\delta \|f\|^2_{L^2(\rO)} + \delta \|\chi \nabla^2 w^\ve\|^2_{L^2(\rO)}
\label{part2_temp_ineq3}
\end{align}
Moreover,
\begin{equation}\label{part2_temp_ineq4}
- \big(f, (\chi^2 w^\ve_{,\ell})_{,\ell}\big)_{L^2(\rO)} \le C_\delta \|f\|^2_{L^2(\rO)} + \delta \|\chi \nabla^2 w^\ve\|^2_{L^2(\rO)} \,.
\end{equation}
By choosing $\delta > 0$ small enough, the combination of (\ref{part2_temp_ineq2}), (\ref{part2_temp_ineq3}) and (\ref{part2_temp_ineq4}) together with interpolation then suggests that
\begin{align*}
\|\chi \nabla^2 w^\ve\|^2_{L^2(\rO)} \le C \Big[1 + \|f\|^2_{L^2(\rO)} + \|\nabla a^\ve\|^4_{L^4(\rO)} \|\nabla w^\ve\|^2_{L^2(\rO)} \Big] \,.
\end{align*}

The interior $H^1$-estimate of $q^\ve$ is done in the same way as part 1 (via a different version of the Lagrange multiplier lemma), and the estimates near the boundary can be done in the same fashion (by integrating by parts first then moving the convolution around) as the interior estimates. Moreover, the estimates of the normal derivatives of $w^\ve$ and $q^\ve$ are obtained in the same way as part 1, so we conclude that
\begin{equation}\label{wq_est_Dirichlet_temp}
\|w^\ve\|^2_{H^2(\rO)} + \|q^\ve\|^2_{H^1(\rO)} \le C \Big[1 + \|f\|^2_{L^2(\rO)} + \|\nabla a^\ve\|^4_{L^4(\rO)} \|\nabla w^\ve\|^2_{L^2(\rO)} \Big] \,.
\end{equation}
Since $a\in W^{1,4}(\rO)$, the right-hand side of the estimate above is independent of $\ve$. Therefore, there is a sequence $\ve_j$ such that $(w^{\ve_j}, q^{\ve_j})$ converges weakly to some function $(w,q) \in H^2(\rO) \times H^1(\rO)$. Moreover, since $a^\ve \to a$ in $W^{1,4}(\rO)$, the variational identity (\ref{weak_part2}) converges to (\ref{elliptic_weak_Dirichlet}); thus $(w,q)$ must be the weak solution to (\ref{elliptic_Dirichlet}). Finally, estimate (\ref{elliptic_est2}) is a direct consequence of (\ref{wq_est_Dirichlet_temp}) by passing $\ve$ to the limit.
\end{proof}

\section{Proof of Lemma \ref{lem:key_elliptic_estimate}}\label{app:elliptic}
Before proceeding to the proof of Lemma \ref{lem:key_elliptic_estimate}, we state the following simple proposition which can be proved easily by interpolation.
\begin{proposition}\label{prop:key_elliptic_est}
Let $f\in L^2(0,\rT;H^{0.5}(\Gamma))$ $h_0 \in H^{2.5}(\Gamma)$, and $h \in \H_1(\rT)$ be a strong solution to
\begin{subequations}\label{h_elliptic}
\begin{alignat}{2}
h'' + \ve^2 h''_t &= f \qquad&&\text{on}\quad \Gamma\times (0,\rT)\,, \\
h &= g &&\text{on}\quad \Gamma\times \{t=0\}\,.
\end{alignat}
\end{subequations}
Then $h\in L^2(0,\rT;H^{2.5}(\Gamma))$ and satisfies
\begin{equation}\label{h_elliptic_estimate}
\|h'\|^2_{L^2(0,\rT;H^{1.5}(\Gamma))} \le C \Big[\ve^2 \|g\|^2_{H^{2.5}(\Gamma)} + \|f\|^2_{L^2(0,\rT;H^{0.5}(\Gamma))} \Big]\,.
\end{equation}
\end{proposition}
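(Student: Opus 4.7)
The plan is to decouple the spatial ellipticity on $\Gamma$ from the temporal smoothing by introducing the auxiliary function $H := h + \ve^2 h_t$, which reduces the equation to $H'' = f$ on $\Gamma$ for a.e.\ $t \in (0,\rT)$. Differentiating once in arc length gives $(H')' = f$, so (since $\Gamma$ is a smooth closed curve) standard elliptic regularity for the Laplace--Beltrami operator on $\Gamma$ provides
$$
\|H'(\cdot,t)\|_{H^{1.5}(\Gamma)} \le C \|f(\cdot,t)\|_{H^{0.5}(\Gamma)} \qquad \text{for a.e.\ } t,
$$
and hence $\|H'\|_{L^2(0,\rT;H^{1.5}(\Gamma))} \le C\|f\|_{L^2(0,\rT;H^{0.5}(\Gamma))}$. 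The compatibility condition $\int_\Gamma f \, dS = 0$ required for the existence of an antiderivative on the closed curve is automatic from the assumption that $h \in \H_1(\rT)$ is a strong solution.

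Next, differentiating the definition of $H$ in arc length yields a first-order linear ODE in time for $h'$, pointwise on $\Gamma$:
$$
\ve^2 (h')_t + h' = H', \qquad h'|_{t=0} = g'.
$$
Duhamel's formula gives
$$
h'(\cdot,t) = e^{-t/\ve^2}\, g' + \int_0^t \phi_\ve(t-\tau)\, H'(\cdot,\tau)\, d\tau,
$$
where $\phi_\ve(t) = \ve^{-2} e^{-t/\ve^2}$ for $t>0$ has $\|\phi_\ve\|_{L^1(\bbR_+)} = 1$. Taking the $H^{1.5}(\Gamma)$ norm, applying Minkowski's inequality to the convolution integral, squaring, and integrating in $t$ lets us handle the two terms separately: the initial-data contribution gives at most $\ve^2 \|g'\|^2_{H^{1.5}(\Gamma)}/2 \le C\ve^2 \|g\|^2_{H^{2.5}(\Gamma)}$ (from $\int_0^\rT e^{-2t/\ve^2}\, dt \le \ve^2/2$), while Young's convolution inequality controls the forcing term by $\|H'\|^2_{L^2(0,\rT;H^{1.5}(\Gamma))}$. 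Combining with the elliptic bound on $H'$ yields the desired estimate \eqref{h_elliptic_estimate}.

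I do not expect any serious obstacle: the whole argument is routine once one recognizes that $H$ decouples the problem, and the only mildly delicate point is bookkeeping of mean values on the closed curve (the zero mode of $h''$ vanishes, but the PDE forces $f$ to be mean-zero, and the zero mode of $h$ itself is governed by the scalar ODE $\ve^2 (\bar h)_t + \bar h = \bar H$, which is harmless). The same Duhamel/Young scheme applied to $h$ (rather than $h'$) simultaneously yields $h \in L^2(0,\rT; H^{2.5}(\Gamma))$, via the equivalence $\|h\|_{H^{2.5}(\Gamma)} \sim \|h\|_{L^2(\Gamma)} + \|h'\|_{H^{1.5}(\Gamma)}$, proving the first assertion of the proposition.
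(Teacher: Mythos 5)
Your proof is correct. The paper itself does not spell out a proof — it only remarks that the proposition "can be proved easily by interpolation," which most naturally points to a one-line energy estimate: take the $H^{0.5}(\Gamma)$ inner product of the equation with $h''$ to obtain
\begin{equation*}
\|h''\|^2_{H^{0.5}(\Gamma)} + \frac{\ve^2}{2}\frac{d}{dt}\|h''\|^2_{H^{0.5}(\Gamma)} = \big(f, h''\big)_{H^{0.5}(\Gamma)} \le \frac{1}{2}\|f\|^2_{H^{0.5}(\Gamma)} + \frac{1}{2}\|h''\|^2_{H^{0.5}(\Gamma)}\,,
\end{equation*}
then integrate in $t$ and invoke $\|h''\|_{H^{0.5}(\Gamma)} \simeq \|h'\|_{H^{1.5}(\Gamma)}$ and $\|g''\|_{H^{0.5}(\Gamma)} \le C\|g\|_{H^{2.5}(\Gamma)}$. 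Your route is genuinely different: you decouple via $H := h + \ve^2 h_t$ (which turns the problem into the stationary elliptic equation $H'' = f$), then solve the scalar first-order ODE $\ve^2(h')_t + h' = H'$ explicitly by Duhamel and use $\|\phi_\ve\|_{L^1(\bbR_+)} = 1$ plus Young's convolution inequality. Both are valid and yield the same estimate; the energy route is shorter, while yours makes the dissipative structure of the time-smoothing term more transparent (the exponential kernel with unit mass). Your observation that the closed-curve compatibility $\int_\Gamma f\,dS = 0$ is automatic from $h$ being a strong solution is correct and is the one non-routine bookkeeping point that either route must handle.
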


\noindent {\bf Lemma \ref{lem:key_elliptic_estimate}}
{\sl Let $(\bfw,\bfq,\bfh) \in \W(\rT) \times \Q(\rT) \times \H_1(\rT)$ be a strong solution to {\rm(\ref{NSreg1})}. Then}
$$
\int_0^\rT \hspace{-1pt} \|\bfh(t)\|^2_{H^{2.5}(\Gamma)} dt \le C \Big[1 + \|h_0\|^2_{H^{1.5}(\Gamma)} + \|\bfv\|^2_{\V(\rT)} + \|\bfq\|^2_{\Q(\rT)} \Big]\,. \eqno{\rm(\ref{hH25_estimate})}
$$
{\sl In particular, the corresponding $\bfJ$, $\bfA$ and $\Psi$ satisfy}
$$
\begin{array}{l}
\displaystyle{} \int_0^\rT \hspace{-2pt}\Big[\|\bfA\|^2_{H^2(\rO)} + \|\bfJ\|^2_{H^2(\rO)} + \|\nabla \psi\|^2_{H^2(\rO)} \Big] dt \vspace{.1cm}\\
\displaystyle{} \hspace{50pt}\le C \Big[1 + \|h_0\|^2_{H^2(\Gamma)} + \|\bfv\|^2_{\V(\rT)} + \|\bfq\|^2_{\Q(\rT)}\Big]\,.
\end{array}
\eqno{\rm(\ref{JApsi_H2_estimate})}
$$
\begin{proof}
We note that (\ref{NSreg1}c) can be rewritten as
\begin{equation}\label{basic_elliptic}
\frac{(\opbh) \bfh''}{\bfg^{3/2}} \rN + \ve^2 \bfw^{\pprime\prime} = f \qquad\text{on}\quad\Gamma\times (0,\rT)\,,
\end{equation}
where $f$ is given by
$$
f^s = \psi^i_{,s} \big[\bfA^j_\ell \bfv^i_{,j} + \bfA^j_i \bfv^\ell_{,j} - \bfq \delta^\ell_i \big] \bfA^k_\ell \rN_k + \frac{(1 \hspace{-1pt}+\hspace{-1pt} \rb_0 \bfh_{\ve\ve})^2 + 2 \bfh^{\prime2}_{\ve\ve} + \bfh_{\ve\ve} \bfh'_{\ve\ve} \rb_0^\prime}{\bfg^{3/2}} \rN_s
$$
and satisfies
$$
\|f\|_{L^2(0,\rT;H^{0.5}(\Gamma))} \le C \Big[\|\bfv\|_{\V(\rT)} + \|\bfq\|_{\Q(\rT)} + 1 \Big]\,.
$$
We first show that $\bfh$ indeed belongs to $L^\infty(0,\rT;H^{2.5}(\Gamma))$ (with an $\ve$-dependent estimate), and then use this fact to obtain estimate (\ref{hH25_estimate}).

Taking the inner product of (\ref{basic_elliptic}) and $\rN$, by the Leibniz rule we obtain that
\begin{align*}
\Big[\smallexp{$\displaystyle{}\frac{(\opbh) }{\bfg^{3/2}}$}\bfh + \ve^2 (\bfw\cdot \rN)\Big]'' &= \widetilde{f}
\end{align*}
where $\widetilde{f}$ is given by
$$
\widetilde{f} \equiv f \cdot \rN + \ve^2 \bfw\cdot \rN'' + 2 \ve^2 \bfw^\pprime\cdot \rN' + \Big[\smallexp{$\displaystyle{}\frac{(\opbh) }{\bfg^{3/2}}$}\Big]'' \bfh + 2 \Big[\smallexp{$\displaystyle{}\frac{(\opbh) }{\bfg^{3/2}}$}\Big]' \bfh'\,.
$$
We note that due to the convolution, $\widetilde{f} \in L^2(0,\rT;H^{0.5}(\Gamma))$. Therefore, by elliptic regularity,
$$
\smallexp{$\displaystyle{}\frac{(\opbh) }{\bfg^{3/2}}$} \bfh + \ve^2 (\bfw\cdot \rN) = g \in L^2(0,\rT;H^{2.5}(\Gamma))\,.
$$
Since $\bfw$ satisfies $\bfh_t = \smallexp{$\displaystyle{}\frac{\bfw\cdot \rN}{\opbh}$}$, we can further rewrite the equation above as
$$
\bfg^{-3/2} \bfh + \ve^2 \bfh_t = \frac{g}{\opbh}\,.
$$
Solving the ODE above using the method of integrating factor, we find that $\bfh \in L^\infty(0,\rT;H^{2.5}(\Gamma))$.

Now we rewrite (\ref{basic_elliptic}) as
$$
\bfh'' \rN + \ve^2 \frac{\bfw^{\pprime\prime}}{\opbh} = \frac{f}{\opbh} + (1 - \bfg^{-3/2}) \bfh'' \rN
$$
which, by projecting to the normal direction, further suggests that
\begin{equation}\label{h_eq_elliptic1}
\bfh'' + \ve^2 \bfh_t'' = \widebar{f}
\end{equation}
with $\widebar{f}$ satisfying
\begin{align*}
\|\widebar{f}\|_{H^{0.5}(\Gamma)} &\le C \Big[\|f\|_{H^{0.5}(\Gamma)} \hspace{-2pt}+\hspace{-1pt} \big(\|\bfh\|_{H^{1.55}(\Gamma)} \hspace{-2pt}+\hspace{-1pt} \|\bfg \hspace{-1pt}-\hspace{-1pt} 1\|_{H^{0.55}(\Gamma)} \big) \|\bfh\|_{H^{2.5}(\Gamma)} \hspace{-2pt}+\hspace{-1pt} \ve^2 \|\bfw\|_{H^{1.5}(\Gamma)} \Big] \\
&\le C \Big[\|f\|_{H^{0.5}(\Gamma)} \hspace{-2pt}+\hspace{-1pt} \varsigma \|\bfh\|_{H^{2.5}(\Gamma)} \hspace{-2pt}+\hspace{-1pt} \|\bfh\|_{H^{1.55}(\Gamma)} \|\bfv\|_{H^{1.5}(\Gamma)} \Big] \\
&\le C \Big[\|f\|_{H^{0.5}(\Gamma)} + \|\bfv\|_{H^2(\rO)} + \varsigma \|\bfh\|_{H^{2.5}(\Gamma)} \Big]\,,
\end{align*}
where we use $\bfw = \bfJ \bfA \bfv$ and (\ref{smallness_of_bfh}) to derive the estimate above. Therefore, by Proposition \ref{prop:key_elliptic_est},
\begin{align}
&\int_0^\rT \hspace{-2pt}\|\bfh'\|^2_{H^{1.5}(\Gamma)} d\tilde{t} \hspace{-1pt}\le\hspace{-1pt} C \Big[\ve^2 \|h_{0\ve}\|^2_{H^{2.5}(\Gamma)} \hspace{-2pt}+\hspace{-1pt} \int_0^\rT \hspace{-2pt} \big[\|f\|^2_{H^{0.5}(\Gamma)} \hspace{-2pt}+\hspace{-1pt} \|\bfv\|^2_{H^2(\rO)} \hspace{-2pt}+\hspace{-1pt} \varsigma \|\bfh\|^2_{H^{2.5}(\Gamma)} \big] d\tilde{t}\Big] \nonumber\\
&\qquad\quad \le C \Big[\|h_0\|^2_{H^{1.5}(\Gamma)} \hspace{-1pt}+\hspace{-1pt} \|\bfv\|^2_{\V(\rT)} \hspace{-1pt}+\hspace{-1pt} \|\bfq\|^2_{\Q(\rT)} \hspace{-1pt}+\hspace{-1pt} 1\Big] \hspace{-1pt}+\hspace{-1pt} C \varsigma \int_0^\rT \hspace{-2pt}\|\bfh\|^2_{H^{2.5}(\Gamma)} d\tilde{t} \,. \label{h_H25_est_temp1}
\end{align}
On the other hand, the evolution equation (\ref{he_eq}) implies that
$$
\|\bfh(t)\|_{L^2(\Gamma)} \le \|h_0\|_{L^2(\Gamma)} \hspace{-1pt}+\hspace{-1pt} \int_0^t \Big\|\smallexp{$\displaystyle{}\frac{(\bfJ \bfA^\rT \rN) \cdot \bfv}{\opbh}$}\Big\|_{L^2(\Gamma)} d\tilde{t} \le \|h_0\|_{L^2(\Gamma)} \hspace{-1pt}+\hspace{-1pt} \sqrt{t} \|\bfv\|_{L^2(0,t;H^2(\rO))}\,,
$$
and (\ref{hH25_estimate}) following from the combination of (\ref{h_H25_est_temp1}) and the estimate above since $\varsigma \ll 1$.
\end{proof}

\vspace{.1in}

\noindent {\bf Acknowledgments.} AC was supported by the National Science Council (Taiwan) under grant 100-2115-M-008-009-MY3, and YL was supported by the National Science Council (Taiwan) under grant
102-2115-M-008-001.

\end{document}